\newcommand{\cal}[1]{\mathcal{#1}}
\newtheorem*{theoremA}{Theorem A}
\newtheorem*{theoremB}{Theorem B}
\newtheorem*{corollaryA1}{Corollary A1}
\newtheorem*{corollaryA2}{Corollary A2}
\numberwithin{example}{subsection} %make examples in subsection y.x be labeled y.x.1, y.x.2, etc.
\theoremstyle{definition}
\newtheorem{example-special}[theorem]{Example} %for examples which are unique in their subsection
\numberwithin{equation}{subsection}
\DeclareMathOperator{\seed}{seed}
\title{Arithmetic properties of Fredholm series \\for $p$-adic modular forms}
\author{John Bergdall and Robert Pollack}
\date{\today}
\address{John Bergdall\\Department of Mathematics and Statistics \\ Boston University \\ 111 Cummington Mall \\ Boston, MA 02215\\USA}
\email{bergdall@math.bu.edu}
\urladdr{http://math.bu.edu/people/bergdall}
\address{Robert Pollack\\Department of Mathematics and Statistics \\ Boston University \\ 111 Cummington Mall \\ Boston, MA 02215\\USA}
\email{rpollack@math.bu.edu}
\urladdr{http://math.bu.edu/people/rpollack}
\subjclass[2000]{11F33 (11F85)}
\begin{document}
\setcounter{tocdepth}{1}

\begin{abstract}
We study the relationship between recent conjectures on slopes of overconvergent $p$-adic modular forms ``near the boundary'' of $p$-adic weight space. We also prove in tame level 1 that the coefficients of the Fredholm series of the $U_p$ operator never vanish modulo $p$, a phenomenon that fails at higher level.  In higher level, we do check that infinitely many coefficients are non-zero modulo $p$ using a modular interpretation of the mod $p$ reduction of the Fredholm series recently discovered by Andreatta, Iovita and Pilloni.
\end{abstract}

\maketitle
\tableofcontents

\section{Introduction}
A recent preprint of Wan, Xiao and Zhang contains a conjecture (see \cite[Conjecture 2.5]{WanXiaoZhang-Slopes}) which makes precise a folklore possibility, inspired by a theorem of Buzzard and Kilford \cite{BuzzardKilford-2adc}, regarding the  slopes of $p$-adic modular forms near the ``boundary of weight space''.

 The conjecture comes in two parts. First, the slopes of overconvergent $p$-adic eigenforms at weights approaching the boundary should change linearly with respect to the valuation of the weight. The second part of the conjecture is that these slopes, after normalizing by this conjectured linear change, form a finite union of arithmetic progressions. The main goal in this paper is to prove that the second conjecture is a consequence of the first.

Implicit in the conjecture is a simple and beautiful description of these slopes:\ near the boundary the slopes arise from a scaling of the Newton polygon of the mod $p$ reduction of the Fredholm series of the $U_p$ operator. The first half of this paper is devoted to studying this characteristic $p$ object using a $p$-adic version of the trace formula discovered by Koike in the 1970s \cite{Koike-padicProperties}. 
Our main result in this direction is that this mod $p$ Fredholm series is not a polynomial, i.e.\ it is a true power series with infinitely many non-zero coefficients. This observation in turn is an important step in the deduction of the main theorem.

Regarding the broader context, work of Andreatta, Iovita and Pilloni (see \cite{AndreattaIovitaPilloni-Halo})  have brought to light an extraordinary theory, envisioned by Robert Coleman, of overconvergent $p$-adic modular forms {\em in characteristic $p$}! The connection with the characteristic zero theory is a modular interpretation of the mod $p$ reduction of the Fredholm series of $U_p$. Our results on this mod $p$ reduction show that there are infinitely many finite slope eigenforms in characteristic $p$ and our methods give a concrete way to understand this new characteristic $p$ theory. 

\subsection{}
We begin setting notation now.
We  fix a prime $p$ and an integer $N \geq 1$, called the tame level,
 such that $(N,p) = 1$. If $k \in \N$ is an integer and $\Gamma \subseteq \SL_2(\Z)$ is a congruence subgroup then we denote by $M_{k}(\Gamma)$ the space of classical modular forms of weight $k$ and level $\Gamma$. Similarly, $S_{k}(\Gamma)$ denotes the space of cusp forms.  We also let $S_k(\Gamma_1(M),\chi)$ denote the space of cusp forms of level $\Gamma_1(M)$ with character $\chi :(\Z/M\Z)^\times \to \C^\times$.

\subsection{}
Choose an embedding $\bar \Q \inject \bar \Q_p$ and use this to make a choice of a $p$-adic valuation on $\bar \Q$ satisfying $v_p(p) = 1$. We also choose an isomorphism $\C \simeq \bar \Q_p$. Using this isomorphism we view spaces of modular forms as vector spaces over $\bar \Q_p$.

\subsection{}
Let $\Delta \ci \Z_p^\x$ be the multiplicative torsion subgroup and write $\Z_p^\times \cong \Delta \times \Gamma$ where $\Gamma = 1+p\Z_p$ if $p$ is odd and $\Gamma = 1+4\Z_2$ if $p=2$. 
Let $A$ be an affinoid $\Q_p$-algebra. The $p$-adic weight space $\cal W$ is defined on $A$-points by 
\begin{equation*} 
\cal W(A) = \Hom_{\cont}(\Z_p^\x, A^\x).
\end{equation*}
Weight space is a union of open discs. Explicitly, we can give a coordinate $w(\kappa)$ on $\cal W$ by $w(\kappa) := \kappa(\gamma)-1$ where $\gamma \in \Gamma$ is some fixed topological generator.  The coordinate $w(\kappa)$ does not determine $\kappa$ as it does not depend on $\restrict{\kappa}{\Delta}$.  
But, we have a natural isomorphism
\begin{align}
\cal W(A) &\iso \Hom(\Delta, A^\x) \x \set{z \in A \st v_A^{\sup}(z) > 0}\label{eqn:weight-space}\\
\kappa &\mapsto (\restrict{\kappa}{\Delta}, w(\kappa)),\nonumber
\end{align}
where $v_A^{\sup}(-)$ denotes the valuation corresponding to the supremum semi-norm on $A$.\footnote{The set of $z \in A$ for which $v_A^{\sup}(z) > 0$ is the set of topologically nilpotent elements in $A$. Equivalently, it is the set of $a \in A$ such that $v_p(a(x)) > 0$ for all $x \in \Sp(A)$ \cite[Proposition 6.2.3/2]{BGR}.} Denote by $\hat{\Delta}$ the group of characters $\Delta \goto \C_p^\times$. Thus we can write
\begin{equation*}
\cal W = \bigunion_{\eta \in \hat{\Delta}} \cal W_{\eta}
\end{equation*}
where $\cal W_\eta = \set{\kappa \st \restrict{\kappa}{\Delta} = \eta}$ is an open $p$-adic unit disc. We say that $\cal W_{\eta}$ is an even component if $\eta$ is an even character, i.e.\ $\eta(-1) = 1$. Finally, note that the isomorphism \eqref{eqn:weight-space} depends on the choice of $\gamma$, but $v_p(w(\kappa))$ does not.
 
\subsection{} For each $\kappa \in \cal W(\C_p)$, Coleman has defined in \cite{Coleman-pAdicBanachSpaces} spaces of overconvergent $p$-adic modular forms $M_\kappa^\dagger(N)$ of tame level $\Gamma_0(N)$ (note that we suppress the choice of $p$ in the notation). There are also close cousins $S_\kappa^{\dagger}(N)$ of cusp forms.  Each of $M_\kappa^{\dagger}(N)$ and $S_\kappa^{\dagger}(N)$ is an $LB$-space (a compact inductive limit of Banach spaces) and there is the usual Hecke action where, in particular, $U_p$ acts compactly.  These spaces are trivial unless $\kappa$ is even because we work in level $\Gamma_0(N)$.

To be more precise, if $v > 0$ is a rational number then one has the notion of $v$-overconvergent $p$-adic modular forms $M_{\kappa}(N)(v)$ of tame level $\Gamma_0(N)$, which form a $p$-adic Banach space (using the notation from \cite[Section B.2]{Coleman-ClassicalandOverconvergent}). If $0 < v' < v$ then there is an injective transition map $M_{\kappa}(N)(v) \inject M_{\kappa}(N)(v')$ which is compact and $M_{\kappa}^{\dagger}(N) = \dirlim_{v>0} M_\kappa(N)(v)$ is their directed limit. For $v$ sufficiently small, the $U_p$-operator is a continuous operator $M_{\kappa}(N)(v) \goto M_\kappa(N)(pv)$ which then defines a compact endomorphism of $M_\kappa(N)(v)$. The characteristic series of $U_p$ acting on $M_\kappa(N)(v)$ is independent of $0 < v$ sufficiently small and we write $\det(1 - t\restrict{U_p}{M_{\kappa}^{\dagger}(N)})$ for this series. 

Most of this article will focus on the characteristic series for cusp forms $S_\kappa^{\dagger}(N)$ and thus we make the notation
\begin{equation*}
P(\kappa,t) = \det(1 - t\restrict{U_p}{S_\kappa^{\dagger}(N)}) \in \Q_p(\kappa)[[t]].
\end{equation*}
The series $P(\kappa,t)$ differs from $\det(1 - t\restrict{U_p}{M_\kappa^{\dagger}(N)})$ only by a factor corresponding to ordinary Eisenstein families. For example, if $N = 1$ then this factor is simply $1-t$.

\subsection{}
By Coleman's work \cite{Coleman-pAdicBanachSpaces}, $P(\kappa,t)$ is analytic in $\kappa$ and in fact the coefficients of $P(\kappa,t)$ are defined by power series over $\Z_p$ in $\kappa$ (we explain this in Theorem \ref{thm:koike-iwasawa} below). Explicitly, for each component $\cal W_\eta \ci \cal W$, there exists power series $a_{i,\eta}(w) \in \Z_p[\![w]\!]$ such that
\begin{equation*}
\kappa \in \cal W_\eta \implies P(\kappa,t) = 1 + \sum_{i=1}^\infty a_{i,\eta}(w(\kappa))t^i.
\end{equation*}

\subsection{}
If $k \in \N$ then we may consider the character $(z\mapsto z^k) \in \cal W(\Q_p)$. There is a canonical embedding $M_k(\Gamma_0(Np)) \inject M_{z^k}^{\dagger}(N)$ which is equivariant for the corresponding Hecke action. Slightly more generally, if $\chi: (\Z/p^t\Z)^\x \goto \bar \Q_p^\x$ is a primitive Dirichlet character and $k \in \Z$ there is also a Hecke equivariant embedding 
$M_{k}(\Gamma_1(Np^t),\chi) \inject M_{z^k\chi}^{\dagger}(N)$.

We refer to the images of these embeddings, as you run over all $k$ and all $\chi$, as the classical subspaces. The weights $\set{z^k}$ and $\set{z^k\chi}$ are called the algebraic and locally algebraic weights. Their union is referred to as the set of arithmetic weights. It is worth pointing out the following regarding the valuations of arithmetic weights:
\begin{lemma}\label{lemma:weight-absolute-values}
Let $k \in \Z$ and $\chi$ be a primitive Dirichlet character modulo $p^t$.
\begin{enumerate}
\item $v_2(w(z^k)) = 2 + v_2(k)$ and if $p > 2$ then $v_p(w(z^k)) = 1 + v_p(k)$.
\item If $t \geq 3$ then $v_2(w(z^k\chi)) = {1 \over 2^{t-3}}$.
\item If $p > 2$ and $t \geq 2$ then $v_p(w(z^k\chi)) = {1 \over \phi(p^{t-1})} = {1 \over p^{t-2}(p-1)}$.
\end{enumerate}
\end{lemma}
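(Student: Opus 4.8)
The plan is to translate each quantity into an explicit $p$-adic valuation involving the fixed topological generator $\gamma \in \Gamma$ and then compute it directly. Unwinding the definition of the coordinate $w$, one has $w(z^k) = \gamma^k - 1$, while $w(z^k\chi) = \gamma^k \chi(\gamma) - 1$ once we regard $\chi$ as a character of $\Z_p^\times$ through the reduction map $\Z_p^\times \to (\Z/p^t\Z)^\times$. Thus part (1) is the computation of $v_p(\gamma^k - 1)$, and parts (2) and (3) are the computation of $v_p(\gamma^k \chi(\gamma) - 1)$.

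For part (1), I would use the standard estimate (via the $p$-adic logarithm, or via lifting the exponent) that $v_p(\gamma^k - 1) = v_p(\gamma - 1) + v_p(k)$ for $p$ odd, and that $v_2(\gamma^k - 1) = v_2(\gamma - 1) + v_2(k)$ whenever $v_2(\gamma - 1) \geq 2$. Since $\gamma$ is a topological generator of $\Gamma$ one has $v_p(\gamma - 1) = 1$ for odd $p$ and $v_2(\gamma - 1) = 2$ for $p = 2$, and part (1) follows. For $p = 2$ the cautious route is to split into $k$ odd, where $v_2(\gamma^k - 1) = v_2(\gamma - 1) = 2 = 2 + v_2(k)$, and $k$ even, where the two-adic lifting-the-exponent formula supplies the extra term $v_2(\gamma + 1) = 1$, giving $v_2(\gamma^k - 1) = v_2(\gamma - 1) + v_2(\gamma + 1) + v_2(k) - 1 = 2 + v_2(k)$.

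For parts (2) and (3) the heart of the matter is identifying $\chi(\gamma)$ as a root of unity of a precise order. Decompose $(\Z/p^t\Z)^\times \cong (\Z/p\Z)^\times \times C$ for odd $p$ (resp.\ $\cong \{\pm 1\} \times C$ for $p = 2$), where $C$ denotes the image of $\Gamma$, a cyclic group of order $p^{t-1}$ (resp.\ $2^{t-2}$) generated by the image of $\gamma$ because $\gamma$ topologically generates $\Gamma$. As $\gamma$ lies in the kernel of the projection onto the first factor, $\chi(\gamma)$ equals $\chi_C(\gamma)$, where $\chi_C$ is the $C$-component of $\chi$; and primitivity of $\chi$ modulo $p^t$ (with $t \geq 2$ for odd $p$, resp.\ $t \geq 3$ for $p = 2$) is exactly the assertion that $\chi_C$ has maximal order. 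Hence $\chi(\gamma)$ is a primitive $p^{t-1}$-th (resp.\ $2^{t-2}$-th) root of unity, and total ramification of the corresponding cyclotomic extension of $\Q_p$ gives $v_p(\chi(\gamma) - 1) = 1/\phi(p^{t-1}) = 1/(p^{t-2}(p-1))$ (resp.\ $v_2(\chi(\gamma) - 1) = 1/\phi(2^{t-2}) = 1/2^{t-3}$).

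To conclude, I would write $\gamma^k \chi(\gamma) - 1 = \chi(\gamma)(\gamma^k - 1) + (\chi(\gamma) - 1)$. By part (1) the first summand has valuation $\geq 1$ (resp.\ $\geq 2$), strictly larger than the valuation of $\chi(\gamma) - 1$, which is at most $1/(p-1) < 1$ (resp.\ $\leq 1$); the ultrametric inequality then gives $v_p(\gamma^k \chi(\gamma) - 1) = v_p(\chi(\gamma) - 1)$, which is parts (2) and (3). I expect the only genuinely delicate points to be the two-adic bookkeeping in part (1) and the verification that ``$\chi$ primitive modulo $p^t$'' is equivalent to ``$\chi_C$ has maximal order'', so that $\chi(\gamma)$ really has the claimed exact order; the remaining valuation estimates are immediate from the ultrametric inequality and the ramification theory of cyclotomic fields.
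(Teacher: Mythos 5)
Your proposal is correct and follows essentially the same route as the paper: part (1) via the binomial theorem/lifting-the-exponent computation of $v_p(\gamma^k-1)$, and parts (2)--(3) by observing that primitivity of $\chi$ forces $\chi(\gamma)$ to be a primitive $p^{t-1}$-th (resp.\ $2^{t-2}$-th) root of unity, whose distance from $1$ is strictly smaller than $v_p(\gamma^k-1)$, so the ultrametric inequality gives the answer. Your write-up is in fact slightly more careful than the paper's on the two-adic case and on why primitivity is equivalent to $\chi(\gamma)$ having maximal order.
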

\begin{proof}
The first computation is an immediate application of the binomial theorem. The proof of (b) is just as the proof of (c), so now let's assume that $p$ is an odd prime. Note that $\gamma \in \Gamma$ is a generator for the kernel of the reduction map $(\Z/p^t\Z)^\times \goto \Delta \simeq (\Z/p)^\times$. In particular, $\chi(\gamma) = \zeta_{p^{t-1}}$ is a primitive $p^{t-1}$st root of unity. Then, since $\gamma^k \equiv 1 \pmod{p}$ and $v_p(\zeta_{p^{t-1}} - 1) < 1$, we get that
\begin{equation*}
v_p(\gamma^k \chi(\gamma) -  1) = v_p(\zeta_{p^{t-1}} - 1) = {1 \over \phi(p^{t-1})}
\end{equation*}
as $t \geq 2$.  This concludes the proof.
\end{proof}
\subsection{}
Lemma \ref{lemma:weight-absolute-values} explains that arithmetic weights live in two separate regions of weight space: 
\begin{itemize}
\item If $p$ is odd then the algebraic weights $z^k$ all live in the ``center region'' $v_p(-) \geq 1$. If $p=2$ and $k$ is even  then the algebraic weights $z^k$ live in the region $v_2(-) \geq 3$. 
\item If $p$ is odd then the locally algebraic weights of conductor at least $p^2$ live in an ``outer region'' $v_p(-) \leq {1 \over p-1} < 1$. If $p=2$ then the locally algebraic weights of conductor at least $8$ lie in the region $v_2(-) \leq 1 < 3$.
\end{itemize}

%\subsection{}
%As $U_p$ acts compactly on $M_\kappa^{\dagger}(N)$, we have a characteristic power series $\det(1-t\restrict{U_p}{M_{\kappa}^{\dagger}(N)})$. The same is true for $S_\kappa^{\dagger}(N)$ and we will focus on the power series
%\begin{equation*}
%P(\kappa,t)  :=  \det(1 - t\restrict{U_p}{S_\kappa^{\dagger}(N)}) \in \Q_p(\kappa)[\![t]\!].
%\end{equation*}
%The series $P(\kappa,t)$ differs from the characteristic power series on the space $M_{\kappa}^{\dagger}(N)$ by a factor corresponding to ordinary Eisenstein families. For example, if $N = 1$ this factor is simply $1-t$.

%\subsection{}
%By Coleman's work \cite{Coleman-pAdicBanachSpaces}, $P(\kappa,t)$ is analytic in $\kappa$ and in fact the coefficients of $P(\kappa,t)$ are defined by power series over $\Z_p$ in $\kappa$. Explicitly, for each component $\cal W_\eta \ci \cal W$, there exists power series $a_{i,\eta}(w) \in \Z_p[\![w]\!]$ such that
%\begin{equation*}
%\kappa \in \cal W_\eta \implies P(\kappa,t) = 1 + \sum_{i=1}^\infty a_{i,\eta}(w(\kappa))t^i.
%\end{equation*}
\subsection{}
Fix a component $\cal W_\eta \ci \cal W$. Since $a_{i,\eta}$ is a power series over $\Z_p$, the Weierstrass preparation theorem implies that, if $a_{i,\eta}\neq 0$, then we can write
$a_{i,\eta} = p^{\mu}f(w)u(w)$
where 
\begin{itemize}
\item $f(w) = w^{\lambda} + \dotsb \in \Z_p[w]$ is a monic polynomial of degree $\lambda \geq 0$ which is a monomial modulo $p$,
\item $\mu$ is a non-negative integer, and 
\item $u(w)$ is a unit in $\Z_p[\![w]\!]$.
\end{itemize}
We note that $\lambda$ is the number of zeroes of $a_{i,\eta}(w)$ in the open unit disc $v_p(-) > 0$. Furthermore, $\mu$ and $\lambda$ do not depend on the choice of $\gamma$ and neither does $v_p(a_{i,\eta}(w_0))$ for a fixed $w_0$ in $\cal W_\eta$. In particular, the following questions are completely independent of the choice of $\gamma$.
\begin{itemize}
\item What is $\mu(a_{i,\eta})$?
\item What is $\lambda(a_{i,\eta})$ and what are the slopes of the zeroes of $a_{i,\eta}$?
\item What is the Newton polygon of $P(w_0,t)$, for a fixed weight $w_0$?
\end{itemize}

For the remainder of the introduction, we focus on the first and the last of these questions.  See Section \ref{sec:examples} for the middle question.

\subsection{}
For $\kappa \in \cal W$, write $\nu_1(\kappa) \leq \nu_2(\kappa) \leq \dotsb$ for the slopes of the Newton polygon of $P(\kappa,t)$ (or, equivalently, the slopes of $U_p$ acting on $S_\kappa^{\dagger}(N)$).  The following is a reformulation of \cite[Conjecture 2.5]{WanXiaoZhang-Slopes} (see also \cite{LiuXiaoWan-IntegralEigencurves}).

\begin{conjecture}\label{conj:slope-conjecture}
For each component $\cal W_\eta$, there exists an $r > 0$ such that:
\begin{enumerate}
\item For $\kappa \in \cal W_\eta$, the Newton polygon of $P(\kappa,t)$ depends only on $v_p(w(\kappa))$ if $0 < v_p(w(\kappa)) < r$.  Moreover, on the region $0 < v_p(-) < r$, the indices of the break points of the Newton polygon of $P(\kappa,t)$ are independent of $\kappa$.
\item If $i$ is the index of a break point in the region $0 < v_p(-) < r$ then $\mu(a_{i,\eta}) = 0$.
\end{enumerate}
\begin{enumerate}
\item[(c)] The sequence $\set{\nu_i(\kappa)/v_p(w(\kappa))}$ is a finite union of arithmetic progressions, independent of $\kappa$, if $0 < v_p(w(\kappa)) < r$ and $\kappa \in \cal W_\eta$.
\end{enumerate}
\end{conjecture}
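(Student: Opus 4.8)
Fix a component $\cal W_\eta$ (on a non-even component every $P(\kappa,t) = 1$ and the statement is vacuous). Recall the Weierstrass decomposition $a_{i,\eta} = p^{\mu(a_{i,\eta})} f_i(w) u_i(w)$ with $f_i$ monic of degree $\lambda(a_{i,\eta})$ and a monomial modulo $p$ and $u_i$ a unit; in particular every nonzero root of $f_i$ has positive $p$-adic valuation. Hence, whenever $v_p(w(\kappa))$ is less than all such root-valuations,
\[
v_p\big(a_{i,\eta}(w(\kappa))\big) = \mu(a_{i,\eta}) + \lambda(a_{i,\eta})\, v_p(w(\kappa)),
\]
so the Newton polygon of $P(\kappa,t)$ is the lower convex hull of the points $\big(i,\ \mu(a_{i,\eta}) + \lambda(a_{i,\eta}) v_p(w(\kappa))\big)$. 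The conjecture is thus really a statement about the characteristic-$p$ object: let $\bar a_{i,\eta} := a_{i,\eta} \bmod p$, let $\bar P_\eta(w,t) := 1 + \sum_{i \geq 1} \bar a_{i,\eta}(w) t^i$ (coefficients in $\Z_p/p\Z_p$), and let $B$ be the set of break indices of the Newton polygon of $\bar P_\eta$ formed with respect to the $w$-adic valuation. The plan is to: (1) identify $B$ and show $\mu(a_{i,\eta}) = 0$ for $i \in B$; (2) establish a uniform estimate --- that a single $r > 0$ validates the displayed identity, and keeps the points at indices with $\mu(a_{i,\eta}) \geq 1$ above the hull, for all $i \in B$ simultaneously; (3) read off (a) and (b); (4) deduce (c).

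Step (1). Feed Koike's $p$-adic trace formula, in the form of Theorem \ref{thm:koike-iwasawa}, into an explicit study of the $\bar a_{i,\eta}$. In tame level $N = 1$ this yields $\bar a_{i,\eta} \neq 0$ for every $i$, so $\mu(a_{i,\eta}) = 0$ unconditionally and only the precise exponents $\lambda(a_{i,\eta}) = \mathrm{ord}_w \bar a_{i,\eta}$ --- enough of them to pin down $B$ --- remain to be computed. For general $N$ some $\bar a_{i,\eta}$ do vanish, and the task becomes to show the vanishing indices avoid $B$; here the modular interpretation of $\bar P_\eta$ due to Andreatta--Iovita--Pilloni \cite{AndreattaIovitaPilloni-Halo} --- identifying it with the characteristic series of $U_p$ on their space of overconvergent modular forms \emph{in characteristic $p$} --- is the natural tool, reducing the question to a slope computation inside a genuine characteristic-$p$ Hecke module accessible through its defining sheaves (and, in low tame level, again through Koike).

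Steps (2)--(3). Granting step (1), on the region $0 < v_p(w(\kappa)) < r$ the Newton polygon of $P(\kappa,t)$ is exactly $v_p(w(\kappa))$ times the $w$-adic Newton polygon of $\bar P_\eta$, with break indices $B$ for every such $\kappa$ --- which is parts (a) and (b). The real content is the uniformity in step (2): $B$ is infinite (the slopes of $U_p$ on the infinite-dimensional $S_\kappa^{\dagger}(N)$ are unbounded, so the limiting Newton polygon has infinitely many vertices), so one needs a single $r > 0$ good for all of $B$ --- a uniform positive lower bound, over $i \in B$, on the valuations of the nonzero roots of $f_i$, plus a uniform gap separating the $\mu \geq 1$ lattice points from the scaled hull. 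This should follow from a sufficiently precise a priori description of the break locus of the $w$-adic Newton polygon of $\bar P_\eta$, in which the indices in $B$ are spaced in an explicit, essentially linear fashion --- forcing the low-order coefficients of each $f_i$ with $i \in B$ to carry extra powers of $w$ beyond what multiplicity alone supplies.

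Step (4), and the obstacle. On $0 < v_p(w(\kappa)) < r$, parts (a) and (b) give $\nu_i(\kappa) = v_p(w(\kappa)) \cdot s_i$ with $s_i$ the $i$-th slope of the $w$-adic Newton polygon of $\bar P_\eta$, so $\{\nu_i(\kappa)/v_p(w(\kappa))\} = \{s_i\}$ is patently $\kappa$-independent; that $\{s_i\}$ is a finite union of arithmetic progressions then comes, by a direct combinatorial argument on the lattice points of the polygon, from the linear spacing of $B$ together with $\mu = 0$ on $B$, which make the successive slope jumps eventually periodic. The main obstacle is steps (1) and (2): parts (a) and (b) together are essentially all of \cite[Conjecture 2.5]{WanXiaoZhang-Slopes}, known only in special cases --- $p = 2$, $N = 1$ by Buzzard--Kilford \cite{BuzzardKilford-2adc}, along with the partial results of \cite{WanXiaoZhang-Slopes} and \cite{LiuXiaoWan-IntegralEigencurves}. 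By contrast the passage from $\bar P_\eta$ to the boundary behaviour of $P(\kappa,t)$, the inputs to step (1) from Koike's formula and the Andreatta--Iovita--Pilloni theory, and the deduction in step (4) are either carried out below or are routine once (a) and (b) are in hand.
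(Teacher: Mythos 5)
The statement you are addressing is a \emph{conjecture}: the paper does not prove it, and explicitly records that it is known only for $(p,N)=(2,1)$ and $(3,1)$. What the paper actually proves is Theorem A ($\mu(a_{i,\eta})=0$ for all $i$ when $N=1$, hence part (b) in tame level $1$), Corollary A2 ($\mu(a_{i,\eta})=0$ for infinitely many $i$ for general $N$), and Theorem B (part (a) implies parts (b) and (c)). Your steps (1)--(3) line up well with this partial picture: the reduction of (a),(b) to the $w$-adic Newton polygon of $\overline{P(w,t)}$ is the paper's Proposition \ref{prop:equiv}, the use of Koike's formula to get $\bar a_{i,\eta}\neq 0$ for $N=1$ is Theorem A, and the appeal to Andreatta--Iovita--Pilloni is how the paper gets Corollaries A1 and A2. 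You are also right that the uniformity over the infinite break set $B$ in step (2) is the real open content, and you correctly flag it as such. Note, though, that the paper's route from (a) to (b) is not a computation of $B$ at all: it is a soft argument (Theorem \ref{thm:thmB(a)to(b)}) that uses only Corollary A2 and a limit $v_p(w_0)\to 0$ in Lemma \ref{lemma:vals}; no identification of the break locus is needed.

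The genuine gap is in your step (4). You assert that $\{s_i\}$ being a finite union of arithmetic progressions follows ``by a direct combinatorial argument on the lattice points of the polygon, from the linear spacing of $B$.'' But nothing established in steps (1)--(3) gives you linear spacing of $B$ or eventual periodicity of the slope jumps; the $w$-adic Newton polygon of an arbitrary element of $1+t\F_p[\![w]\!][\![t]\!]$ has no such structure, and positing it is tantamount to assuming (c). The paper's proof that (a) implies (c) (Theorems \ref{thm:arith-progs} and \ref{thm:arith-progressions}) is genuinely arithmetic: it evaluates at a weight $z^2\chi$ near the boundary, slices the slopes into intervals $[k-2,k-1)$, and uses the Atkin--Lehner involution $w_{p^t}$ (Corollary \ref{cor:atkin-lehner}) together with Coleman's classicality theorem, including its critical-slope boundary case, to express $\nu^{\dagger}_{z^2\chi}$ in terms of classical weight-two slopes $\nu^{\cl}_{z^2\chi\omega^{-2j}}$ on \emph{other} components of weight space; the periodicity with common difference $|\Delta|/2$ comes from $\omega$ having finite order, not from the geometry of the mod $p$ polygon. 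This is also why the paper insists that (a) be assumed on all components simultaneously for the implication (a) $\Rightarrow$ (c), a hypothesis your combinatorial route would not require and which is therefore a signal that the route cannot work as stated.
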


\subsection{}
Conjecture \ref{conj:slope-conjecture} is known completely in only two cases: when $p=2$ or $p=3$ and $N =1$. The case $p=2$ is due to Buzzard and Kilford \cite{BuzzardKilford-2adc}. The case $p=3$ is due to Roe \cite{Roe-Slopes}. In the case where either $p=5$ or $p=7$ and $N=1$, Kilford \cite{Kilford-5Slopes} and Kilford and McMurdy \cite{KilfordMcMurday-7adicslopes} verified part (c) for a single weight.

More recently, Liu, Wan and Xiao have proven the analogous conjecture in the setting of overconvergent $p$-adic modular forms for a definite quaternion algebra \cite[Theorems 1.3 and 1.5]{LiuXiaoWan-IntegralEigencurves}. By the Jacquet--Langlands correspondence, their results give striking evidence for, and progress towards, Conjecture \ref{conj:slope-conjecture}. 
\subsection{}
Let's reframe Conjecture \ref{conj:slope-conjecture} in terms which hint at the approaches we've mentioned being developed separately by Andreatta, Iovita and Pilloni \cite{AndreattaIovitaPilloni-Halo} and Liu, Wan, and Xiao \cite{LiuXiaoWan-IntegralEigencurves}.

On a fixed component $\cal W_{\eta}$, write $P(w,t) := P_\eta(w,t) = 1+\sum a_{i,\eta}(w) t^i \in \Z_p[\![w,t]\!]$ for the two-variable characteristic power series of $U_p$ and  $\bar{P(w,t)} \in \F_p[\![w,t]\!]$ for its mod $p$ reduction. Viewing $\bar{P(w,t)}$ as a one-variable power series in $t$ over $\F_p[\![w]\!]$, it has a Newton polygon which we call the $w$-adic Newton polygon of $\bar{P(w,t)}$.  

We can rephrase Conjecture \ref{conj:slope-conjecture}(a,b) in terms of the Newton polygon of $\bar{P(w,t)}$.

\begin{conjecture}\label{conj:mod-p-reduction}
For each component $\cal W_{\eta}$, there exists an $r > 0$ such that if $\kappa \in \cal W_{\eta}$ and $0 < v_p(w(\kappa)) < r$ then the Newton polygon of $P(w(\kappa),t)$ equals the $w$-adic Newton polygon of $\bar{P(w,t)}$ scaled by $v_p(w(\kappa))$.\footnote{By the scaling of a Newton polygon, we mean the vertical scaling of all points on the Newton polygon.}
\end{conjecture}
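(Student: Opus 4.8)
Write $v = v_p(w(\kappa))$ and fix the Weierstrass factorizations $a_{i,\eta}(w) = p^{\mu_i}f_i(w)u_i(w)$, with $f_i \in \Z_p[w]$ distinguished of degree $\lambda_i$ and $u_i \in \Z_p[\![w]\!]^\times$. Since $v > 0$ forces $v_p(u_i(w(\kappa))) = 0$, we have
\begin{equation*}
v_p\bigl(a_{i,\eta}(w(\kappa))\bigr) = \mu_i + \sum_{f_i(\alpha) = 0} v_p\bigl(w(\kappa) - \alpha\bigr),
\end{equation*}
and each summand equals $\min(v, v_p(\alpha))$ unless $v = v_p(\alpha)$. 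Meanwhile the $w$-adic Newton polygon of $\bar{P(w,t)}$ is the lower convex hull of $(0,0)$ together with the points $(i, \lambda_i)$ as $i$ runs over the indices with $\mu_i = 0$; an index with $\mu_i > 0$ has $\bar a_{i,\eta} = 0$ and so contributes only a ``point at infinity''. The plan is to choose $r$ below a uniform positive lower bound for the slopes of the zeroes of all the $a_{i,\eta}$ on the component (the ``middle question'' of Section \ref{sec:examples}) and then pinch the two polygons together for $0 < v < r$.

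\emph{The easy inequality.} For any $v > 0$ that is not the slope of a zero of some $a_{i,\eta}$, the displayed formula gives $v_p(a_{i,\eta}(w(\kappa))) = \sum_\alpha \min(v, v_p(\alpha)) \le v\lambda_i$ whenever $\mu_i = 0$, so the point above $i$ in the Newton polygon of $P(w(\kappa),t)$ lies on or below $(i, v\lambda_i)$; passing to lower convex hulls --- and observing that the indices with $\mu_i > 0$ only introduce extra vertices, which can only lower the hull --- the Newton polygon of $P(w(\kappa),t)$ lies on or below the $v$-scaled $w$-adic Newton polygon of $\bar{P(w,t)}$. This half needs no boundary hypothesis.

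\emph{The hard inequality and the choice of $r$.} Suppose one has an $r > 0$ such that every zero $\alpha$ of every $a_{i,\eta}$ on $\cal W_\eta$ satisfies $v_p(\alpha) > r$. Then for $0 < v < r$ the sums collapse: $v_p(a_{i,\eta}(w(\kappa))) = v\lambda_i$ when $\mu_i = 0$ and $v_p(a_{i,\eta}(w(\kappa))) = \mu_i + v\lambda_i \ge 1$ when $\mu_i > 0$. The first family of points is precisely the generating set of the $v$-scaled $w$-adic polygon, and the second family lies on or above that polygon once $v$ is small --- this last point requires, in addition to the root-slope bound, a Hodge-polygon-type comparison ensuring that the difference between the $w$-adic Newton polygon and the orders $\lambda_i$ stays bounded along the indices with $\mu_i > 0$, so that the height $\ge 1$ of such a point always beats the small scaled height $v$ times that bounded difference. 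That comparison is vacuous in tame level $N = 1$, where $\mu_i = 0$ for every $i$ by our main non-vanishing theorem, and in general should be read off from Koike's $p$-adic trace formula (Theorem \ref{thm:koike-iwasawa}) together with the linear growth of dimensions of spaces of classical forms. Granting all of this, the easy and hard inequalities squeeze the Newton polygon of $P(w(\kappa),t)$ onto the scaled $w$-adic polygon for $0 < v < r$, which is the conjecture.

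\emph{The main obstacle.} Thus everything reduces to producing the uniform threshold $r$ controlling the slopes of the zeroes of \emph{all} the coefficients $a_{i,\eta}$ simultaneously --- this is the genuinely hard and currently open step, and it is exactly what is verified by hand in the known cases ($p \le 3$, $N = 1$, by Buzzard--Kilford \cite{BuzzardKilford-2adc} and Roe \cite{Roe-Slopes}) from the explicit shape of the $w$-adic Newton polygon. I would attack it along two complementary lines. The first is $w$-adic: use Theorem \ref{thm:koike-iwasawa} to write the coefficients of $a_{i,\eta}(w)$ as explicit power series over $\Z_p$ and establish upper bounds, of Dwork--Wan type, on the $p$-adic valuations of their low-order $w$-coefficients; juxtaposed with the lower bound coming from the $w$-adic Newton polygon (the $w$-adic order of $\bar a_{i,\eta}$ being $\lambda_i$), such bounds pin the zero-slopes from below. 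The second is geometric: use the Andreatta--Iovita--Pilloni characteristic-$p$ modular interpretation of $\bar{P(w,t)}$ \cite{AndreattaIovitaPilloni-Halo} to realize $U_p$ on $S_\kappa^\dagger(N)$, for $\kappa$ in a small boundary annulus, as a member of a single analytic family of compact operators over that annulus whose $w$-adic Fredholm determinant is $\bar{P(w,t)}$, and then extract the scaling from a uniform specialization theorem for Newton polygons of compact operators in families, in the spirit of the quaternionic argument of Liu--Wan--Xiao \cite{LiuXiaoWan-IntegralEigencurves}. On either route, the difficulty is the \emph{uniformity} of $r$ over the infinitely many coefficients.
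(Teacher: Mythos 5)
The statement you are trying to prove is a \emph{conjecture}, and the paper does not prove it: it is known only for $p=2$ and $p=3$ with $N=1$ (Buzzard--Kilford, Roe), and what the paper establishes about it is Proposition \ref{prop:equiv}, namely its \emph{equivalence} with Conjecture \ref{conj:slope-conjecture}(a) (restated as Conjecture \ref{conj:in-text}). Your proposal, as you yourself concede in the final paragraph, is not a proof but a reduction to two unproved inputs, so there is a genuine gap by construction. The first input --- a single $r>0$ bounding from below the slopes of the zeroes of \emph{all} the $a_{i,\eta}$ simultaneously --- is precisely the open question raised in Section \ref{sec:examples}, and the paper's own data shows it is delicate: for $p=23$, $N=1$ the coefficient $a_3$ has zeroes of slope $\tfrac12$ (Example \ref{example:bad-zeros}), and for $p=5$, $N=3$ the coefficient $a_1$ has all its zeroes of slope $\tfrac15$ (Example \ref{example:zeros15}), closer to the boundary than the arithmetic weights of conductor $p^2$. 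So the uniform threshold cannot be taken to be $1$ in general, and its existence for any $r>0$ is open. The second input, your ``Hodge-polygon-type comparison'' at indices with $\mu_i>0$ (needed so that $\mu_i + v\lambda_i$ stays above the scaled hull uniformly in $i$), is also unsubstantiated; nothing in Koike's formula obviously bounds the defect $N(i)-\lambda_i$ along such indices. A smaller issue: your ``easy inequality'' $v_p(a_{i,\eta}(w(\kappa)))\le v\lambda_i$ fails when $w(\kappa)$ is close to a zero of $a_{i,\eta}$ of slope exactly $v$, and since there are infinitely many coefficients the excluded set of $\kappa$ is not negligible for the purposes of the conjectured statement, which quantifies over all $\kappa$ in the annulus.

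It is worth contrasting your strategy with what the paper actually does. Proposition \ref{prop:equiv} never needs a uniform-in-$i$ bound on the zeroes: assuming Conjecture \ref{conj:in-text}, it verifies the convexity inequalities \eqref{eqn:want} one index at a time, shrinking the annulus to $0<v_p(w_0)<r'$ where $r'$ depends on the finitely many coefficients $a_i, a_{i_j}, a_{i_{j+1}}$ under consideration, and then transfers the resulting inequality (which no longer mentions $w_0$) back to the whole region. The $\mu$-vanishing at breakpoints needed for Lemma \ref{lemma:vals} to apply is supplied by Corollary A2 via Theorem \ref{thm:thmB(a)to(b)}, not assumed. If your goal is the unconditional conjecture, you must either produce the uniform $r$ (open, and constrained by the examples above) or find a hypothesis, like Conjecture \ref{conj:in-text}, under which the index-by-index argument closes up.
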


The equivalence of Conjecture \ref{conj:mod-p-reduction} and Conjecture \ref{conj:slope-conjecture}(a,b) is proven in Proposition \ref{prop:equiv}. The proof makes key use of Corollary A2 below.

\begin{remark}
In \cite{AndreattaIovitaPilloni-Halo}, Andreatta, Iovita and Pilloni give a {\em modular} interpretation of $\bar{P(w,t)}$ as the characteristic power series of a compact operator acting on an $\F_p(\!(w)\!)$-Banach space.
\end{remark}

\subsection{}

We now state our first result which implies Conjecture \ref{conj:slope-conjecture}(b) in the case of tame level 1.

\begin{theoremA}[Theorem \ref{thm:mu=0-intext}]
If $N = 1$ and $\eta$ is even then $\mu(a_{i,\eta}) = 0$ for each $i \geq 1$.
\end{theoremA}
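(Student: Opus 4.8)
If $\eta$ is odd every $S^{\dagger}_\kappa(1)$ with $\kappa\in\cal W_\eta$ vanishes, so I assume $\eta$ even. The engine of the proof is Koike's $p$-adic trace formula, Theorem~\ref{thm:koike-iwasawa}: for $\kappa\in\cal W_\eta$ it produces explicit power series
\[
t_n = t_n(w) := \operatorname{Tr}\bigl(U_p^n \mid S_\kappa^{\dagger}(1)\bigr)\in\Z_p[\![w]\!],
\]
and in tame level $1$ the level structure contributes nothing, so these are clean \emph{finite} sums: after stripping off the explicit hyperbolic and parabolic contributions, what remains runs over integers $t$ with $t^2<4p^n$, and for $p\nmid t$ the corresponding term is an explicit $p$-adic unit times $H(4p^n-t^2)\,\rho^{-1}\kappa(\rho)\,(1-p^n\rho^{-2})^{-1}$, where $\rho\in\Z_p^\times$ is the unit root of $X^2-tX+p^n$ and $H$ is the Hurwitz class number. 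The first step is to reduce this identity modulo $p$. Since $v_p(p^n\rho^{-2})>0$ the geometric factors become $1$, the class numbers reduce to elements of $\F_p$, and $\overline{\kappa(\rho)}$ becomes a twisted exponential $(1+w)^{\log_\gamma\langle\rho\rangle}\in\F_p[\![w]\!]$. Hence each $\bar{t}_n$ is, quite explicitly, a finite $\F_p$-linear combination of twisted exponentials, plus the reduction of the hyperbolic/parabolic part. The real content of this step is to check that these finitely many contributions do not cancel, and to compute $v_w(\bar{t}_n)$; in particular I need $\bar{t}_n\neq 0$, and ideally a clean formula for its $w$-adic valuation.

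With the $\bar{t}_n$ in hand I would recover the $\bar{a}_{i,\eta}$ by induction on $i$. When $i$ is prime to $p$, Newton's identities $i\,a_{i,\eta}=-\sum_{n=1}^i a_{i-n,\eta}\,t_n$ express $\bar{a}_{i,\eta}$ in terms of lower data; carrying an inductive hypothesis that records the exact $w$-adic leading term of each $\bar{a}_{j,\eta}$ for $j<i$, one reads off the leading term of the right-hand side and checks it is a unit, so no cancellation occurs. Given the valuations of the $\bar{t}_n$ from the first step, this is essentially bookkeeping.

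The genuine obstacle is the case $p\mid i$: there Newton's identities only impose a consistency relation and say nothing about $a_{i,\eta}$. This is not a technicality — it is exactly where the higher-level statement fails, the reduction $\bar{P(w,t)}$ there picking up a nontrivial factor lying in $\F_p[\![w]\!][\![t^p]\!]$ — so genuinely new input is needed. The plan is to exploit the rest of the Hecke algebra, which tame level $1$ makes available: applying Koike's formula to the operators $U_p^n T_\ell$ with an auxiliary prime $\ell\neq p$ yields the twisted traces $\sum_j \beta_j^n\lambda_j^m$, where the $(\beta_j,\lambda_j)$ range over the joint $(U_p,T_\ell)$-eigensystems on $S_\kappa^{\dagger}(1)$; combined with the $t_n$, these provide enough symmetric functions of the $\beta_j$ to pin down $\bar{a}_{i,\eta}$ even when $p\mid i$. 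An alternative route is specialization: by Lemma~\ref{lemma:weight-absolute-values} one may choose $\kappa\in\cal W_\eta$ locally algebraic of arbitrarily large conductor, hence with $v_p(w(\kappa))$ as small as desired, and if the first step can be leveraged into an \emph{unconditional} bound showing the first $i$ slopes of $U_p$ on $S_\kappa^{\dagger}(1)$ are distinct and $O(v_p(w(\kappa)))$, then Coleman's classicality theorem forces $i$ to be a break point of the Newton polygon of $P(\kappa,t)$ at height $<1$, so $a_{i,\eta}(w(\kappa))$ is a unit and $\bar{a}_{i,\eta}\neq 0$. Either way, crossing the indices divisible by $p$ is the step I expect to be hardest; the explicitness of the level-$1$ trace formula modulo $p$ is what should make it tractable.
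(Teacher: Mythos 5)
Your strategy stalls exactly where you say it does, and neither of your proposed escapes closes the gap. Working with the reductions $\bar t_n\in\F_p[\![w]\!]$ and Newton's identities, the case $p\mid i$ is genuinely lost: the relation $i\,a_i=-\sum_{j} a_{i-j}t_j$ reduces to $0=\cdots$ and carries no information about $\bar a_i$. The twisted-trace idea (traces of $U_p^nT_\ell$) is left undeveloped, and it is unclear what symmetric-function identity would recover $a_i$ from such joint eigensystem data; the specialization route is close to circular, since the slope bounds it requires (the first $i$ slopes of $U_p$ distinct and $O(v_p(w(\kappa)))$ near the boundary) are essentially the content of the boundary-slope conjecture under study, not something extractable ``unconditionally'' from the mod-$p$ trace formula. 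Even for $p\nmid i$ your plan asks for more than is needed: computing $v_w(\bar t_n)$ and ruling out cancellation among the twisted exponentials $(1+w)^{\log_\gamma\langle\rho\rangle}$ is delicate and, as it turns out, unnecessary.

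The fix is to not reduce mod $p$ and not pass to the coordinate $w$ at all. Koike's formula (Corollary \ref{cor:Tjiwasawa}) exhibits $T_j$ as a \emph{finite} $\Z_p$-linear combination of group-like elements of $\Z_p[\Gamma]$ in which the coefficient of $[1]_\eta$ is exactly $-1$; this is precisely where $N=1$ enters, since $c_1(p,j)=1$. Two structural facts then do all the work. First (Lemma \ref{lemma:mu}), a finite sum $\sum c_g[g]$ over distinct $g\in\Gamma$ has $\mu=0$ as soon as one $c_g$ is a unit: distinct group-like elements cannot cancel mod $p$. Second (Lemma \ref{lemma:rhoe}), any product of the unit roots $\rho_{s,j}$ has archimedean absolute value $>1$ and hence is not a root of unity, so in the products $a_{i-j}T_j$ the only contribution to the coefficient of $[1]_\eta$ comes from $[1]_\eta\cdot[1]_\eta$. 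By induction the recursion \eqref{eqn:newton-relations} then gives $ia_i=i[1]_\eta+(\text{terms }[\rho_{\mathbf e}]_\eta\text{ with }\rho_{\mathbf e}\neq 1)$, and since one knows a priori that $a_i\in\Z_p[\Gamma]$, the division by $i$ is performed integrally: the coefficient of $[1]_\eta$ in $a_i$ is exactly $1$, a unit whether or not $p\mid i$. Your obstruction at $p\mid i$ is an artifact of reducing mod $p$ before dividing by $i$.
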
 
Our proof of Theorem A (see Section \ref{sec:mu=0}) makes use of an older tool:\ Koike's computation of the traces of Hecke operators via a $p$-adic limit of the Eichler--Selberg trace formula \cite{Koike-padicProperties}.  

The assumption that $N = 1$ is necessary for Theorem A  (compare with Example \ref{example:p2N3}). However, we can also show that the $\mu$-invariants of the $a_i$ vanish infinitely often in higher tame level (see Corollary A2).  We thank Vincent Pilloni for suggesting the corollaries that follow.

\subsection{}
In  \cite{AndreattaIovitaPilloni-Halo}, Andreatta, Iovita and Pilloni construct, for each component $\cal W_{\eta}$, an $\F_p(\!(w)\!)$-Banach space $\bar M_{\bar \kappa_\eta}^{\dagger}(N)$ of ``overconvergent $p$-adic modular forms in characteristic $p$'' equipped with a compact operator $\bar U_p$. By \cite[Corollaire 1.1]{AndreattaIovitaPilloni-Halo} the characteristic  power series of $\bar U_p$ acting on $\bar M_{\bar \kappa_{\eta}}^{\dagger}(N)$ is equal to $\bar{P(w,t)}$. Thus an immediate corollary of Theorem A is:
\begin{corollaryA1}
For all tame levels $N$ and every even component $\cal W_{\eta}$, there exists infinitely many finite slope eigenforms for $\bar U_p$ in $\bar M_{\bar \kappa_{\eta}}^{\dagger}(N)$.
\end{corollaryA1}
\begin{proof}
If $N = 1$, Theorem A implies that the characteristic power series $\det(1 - \restrict{t\bar U_p}{\bar M_{\bar \kappa_\eta}^{\dagger}(1)}) = \bar{ P(w,t)}$ is not a polynomial. Thus, the infinitely many roots of $\bar{P(w,t)}$ correspond to infinitely many finite slope eigenform for $\bar U_p$ in $\bar M_{\bar \kappa_\eta}^{\dagger}(1)$.

For $N > 1$, the theory in \cite{AndreattaIovitaPilloni-Halo} implies that the usual degeneracy maps between modular curves induce an injective $\bar U_p$-equivariant embedding $\bar M_{\bar \kappa_\eta}^{\dagger}(1) \inject \bar M_{\bar \kappa_\eta}^{\dagger}(N)$. Since $\bar M_{\bar \kappa_\eta}^{\dagger}(1)$ has infinitely many finite slope forms, the same is true for $\bar M_{\bar \kappa_\eta}^{\dagger}(N)$.
\end{proof}

\begin{remark}
Coleman deduced that there are infinitely many finite slope overconvergent $p$-adic eigenforms for $U_p$ acting in a classical weight $k$ using a similar argument (see \cite[Proposition I4]{Coleman-pAdicBanachSpaces}).
\end{remark}

\begin{corollaryA2}
For each fixed $N \geq 1$ and even $\eta$, we have $\mu(a_{i,\eta}) = 0$ for infinitely many $i$.
\end{corollaryA2}
\begin{proof}
This is equivalent to the statement that $\det(1 - \restrict{t\bar U_p}{\bar M_{\bar \kappa_\eta}^{\dagger}(N)})$ is not a polynomial which follows immediately from Corollary A1.
\end{proof}

\begin{remark}
One could also argue that characteristic power series in tame level 1 divides the one in tame level $N$ as entire functions over weight space.  The same would then follow for their mod $p$ reductions.
Since the mod $p$ reduction of the tame level 1 series is not a polynomial by Theorem A, the series in tame level $N$ is not a polynomial either.  This gives a direct argument for Corollary A2 allowing us to reverse the logic above and deduce Corollary A1 from Corollary A2.
\end{remark}

\subsection{}
Our second result identifies the essential part of Conjecture \ref{conj:slope-conjecture}.  We show that if the Newton polygons at weights near the boundary behave uniformly in the weight (i.e.\ Conjecture \ref{conj:slope-conjecture}(a)), then it is automatic that the $\mu$-invariants of the $a_i$ vanish whenever $i$ is breakpoint (Conjecture \ref{conj:slope-conjecture}(b)) and the slopes near the boundary form a finite union of arithmetic progressions (Conjecture \ref{conj:slope-conjecture}(c)).

\begin{theoremB}[Theorems \ref{thm:thmB(a)to(b)} and \ref{thm:arith-progs}]
If Conjecture \ref{conj:slope-conjecture}(a) holds on $0 < v_p(-) < r$ for every even component of weight space then the same is true for Conjecture \ref{conj:slope-conjecture}(b) and (c).
\end{theoremB}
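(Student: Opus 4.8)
The plan is to handle (b) first and then feed it into (c). Fix an even component $\cal W_\eta$, let $r>0$ be as in Conjecture \ref{conj:slope-conjecture}(a), and write the Weierstrass factorization $a_{i,\eta}=p^{\mu_i}f_i(w)u_i(w)$ as in the text; let $0=i_0<i_1<i_2<\cdots$ be the break indices of the Newton polygon of $P(\kappa,t)$, which by (a) are the same for every $\kappa$ with $0<v_p(w(\kappa))<r$. The first thing I would prove is that for a break index $i_n$ the polynomial $f_{i_n}$ has no zero of valuation $<r$: were $\alpha$ such a zero, pick $\kappa\in\cal W_\eta$ with $w(\kappa)=\alpha$; then $a_{i_n,\eta}(\alpha)=0$, so the point $(i_n,\infty)$ lies strictly above the Newton polygon of $P(\kappa,t)$ and $i_n$ fails to be a break index there, contradicting (a). It follows that $v_p\bigl(a_{i_n,\eta}(w(\kappa))\bigr)=\mu_{i_n}+\lambda_{i_n}v_p(w(\kappa))$ throughout the region, and since $i_n$ is a vertex the Newton polygon passes through $\bigl(i_n,\mu_{i_n}+\lambda_{i_n}v_p(w(\kappa))\bigr)$.

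The crux of (b) is then to show that $\nu_j(\kappa)\to 0$ as $v_p(w(\kappa))\to 0^+$ along $\cal W_\eta$, for each fixed $j$. This is exactly where Corollary A2 enters: choose an index $i>j$ with $\mu_i=0$; since $f_i\equiv w^{\lambda_i}\pmod p$ we get $v_p\bigl(a_{i,\eta}(w(\kappa))\bigr)=\lambda_i v_p(w(\kappa))$ once $v_p(w(\kappa))$ is small, and then, using that the first $j$ slopes of the Newton polygon are non-negative and that its value at $i$ is at most $v_p(a_{i,\eta}(w(\kappa)))$, one gets $(i-j)\nu_j(\kappa)\le \lambda_i v_p(w(\kappa))$. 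Now if $i_{n-1}<j\le i_n$ then, by the first paragraph, $\nu_j(\kappa)$ is the slope $\bigl((\mu_{i_n}-\mu_{i_{n-1}})+(\lambda_{i_n}-\lambda_{i_{n-1}})v_p(w(\kappa))\bigr)/(i_n-i_{n-1})$; letting $v_p(w(\kappa))\to 0^+$ forces $\mu_{i_n}=\mu_{i_{n-1}}$, and since $\mu_{i_0}=0$ an induction on $n$ gives $\mu_{i_n}=0$ for every break index. That is Conjecture \ref{conj:slope-conjecture}(b).

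With (b) in hand, the Newton polygon of $P(\kappa,t)$ on $0<v_p(w(\kappa))<r$ is exactly $v_p(w(\kappa))$ times the fixed polygon $\cal N_\eta$ with vertices $(i_n,\lambda_{i_n})$ --- which is precisely the $w$-adic Newton polygon of $\bar{P_\eta(w,t)}$ --- so $\nu_j(\kappa)/v_p(w(\kappa))$ is the $j$-th slope of $\cal N_\eta$, visibly independent of $\kappa$. The remaining content of (c) is the assertion that the slope sequence of $\cal N_\eta$, with multiplicities, is a finite union of arithmetic progressions; this part genuinely uses that $P$ arises from modular forms --- a generic power series satisfying (a) (e.g.\ $\prod_{n\ge 1}(1-w^{2^n}t)$) does not have this property. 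My plan here is to read off $\cal N_\eta$ from the Newton polygons at the locally algebraic weights $z^k\chi$ with $\chi$ of conductor $p^t$: by Lemma \ref{lemma:weight-absolute-values} these lie in the region $0<v_p(-)<r$ once $t$ is large, so their Newton polygon is the corresponding scaling of $\cal N_\eta$, while by Coleman's classicality theorem the slopes below $k-1$ are realized on the classical spaces $S_k(\Gamma_0(N)\cap\Gamma_1(p^t),\chi)$. Feeding in the $p$-adic valuations of $U_p$ on newforms of $p$-power conductor, together with the dimension formulas for these classical spaces as $k$ runs through a fixed residue class, should organize the pairs $(i_n,\lambda_{i_n})$ --- and hence the slopes of $\cal N_\eta$ --- into finitely many arithmetic progressions.

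I expect this last translation to be the main obstacle. Step (b), and the reduction of (c) to the single polygon $\cal N_\eta$, are comparatively soft, using only (a), Corollary A2, and elementary properties of Weierstrass polynomials near the boundary; but extracting the arithmetic-progression structure requires matching the combinatorial break data $(i_n,\lambda_{i_n})$ against classical slope-and-dimension data uniformly in the weight, and keeping careful track of which overconvergent slopes fall below $k-1$ (hence are pinned down classically) as $k$ varies. This is also where the hypothesis that (a) holds on \emph{every} even component is used, since the weights $z^k\chi$ spread out over all the components.
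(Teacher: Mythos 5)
Your argument for (a) $\Rightarrow$ (b) is correct and is essentially the paper's: both hinge on Corollary A2 to supply an index $i$ beyond the given break point with $\mu(a_i)=0$, on the observation that $a_{i_n}$ cannot vanish on $0<v_p(-)<r$ (so that Lemma \ref{lemma:vals} applies), and on letting $v_p(w(\kappa))\to 0^+$ to force the $\mu$-contribution to the break-point slope to vanish. Likewise, your reduction of (c) to the single statement that the slope sequence of the fixed polygon $\mathcal{N}_\eta$ is a finite union of arithmetic progressions is exactly Propositions \ref{prop:equiv} and \ref{prop:sufficient}.

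The gap is in the last step, which you yourself flag as ``the main obstacle'': nothing in your plan actually produces the arithmetic-progression structure. Knowing that at every locally algebraic weight $z^k\chi$ in the region the slopes below $k-1$ are classical only re-expresses $\mathcal{N}_\eta$ in terms of classical slopes in weights $k\to\infty$; dimension formulas and newform valuations control how many slopes lie in each window but say nothing about their values, and there is no a priori periodicity in the classical slope data as $k$ grows. The missing ingredient is the Atkin--Lehner involution $w_{p^t}$ on $S_k(\Gamma_1(Np^t),\chi)$ (Proposition \ref{prop:atkin-lehner}), which sends slope $\nu$ to $k-1-\nu$ and character $\chi$ to $\chi^{-1}$. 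Combined with a second application of Conjecture \ref{conj:slope-conjecture}(a) on the component containing $z^k\chi^{-1}\omega^{k-2}$ --- generally a \emph{different} component, which is the real reason the hypothesis is needed on all components at once --- this converts the slopes of $S^\dagger_{z^2\chi}(N)$ in the window $[k-2,k-1)$ into $k-1$ minus the slopes in $(0,1]$ at the weight-two point $z^2\chi^{-1}\omega^{2k-4}$ (Proposition \ref{prop:intermediate}). Since $\omega^{2k-4}$ depends only on $k$ modulo $(p-1)/2$, this is the recursion that yields finitely many arithmetic progressions with common difference $|\Delta|/2$ (Theorem \ref{thm:arith-progressions}). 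One must also treat the boundary slope $\nu=k-1$ (equivalently $\nu=1$ in weight two), where Coleman's classicality fails and Theorem \ref{thm:coleman-control}(b) brings in the $e(N)$ ordinary Eisenstein families and the weight-two ordinary cuspforms on yet another component; these produce the progressions $\{1,2,3,\dotsc\}^{\oplus e(N)}$ and part of the seed set, which your sketch does not account for.
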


The implication (a) implies (b) follows easily from Corollary A2 and is Theorem \ref{thm:thmB(a)to(b)} in the text.  The proof we give for (a) implies (c) (see Theorem \ref{thm:arith-progs}) again uses a classical tool (the Atkin-Lehner involutions on cuspforms with nebentype) and a slightly more modern one (Coleman's classicality theorem for overconvergent cuspforms). A similar argument was noticed independently by Liu, Wan and Xiao and used to complete the main result of their paper \cite[Section 4.2]{LiuXiaoWan-IntegralEigencurves}.  We remark that our proof of (a) implies (b) holds component-by-component.  However, our proof of (a) implies (c) truly requires as an input that Conjecture \ref{conj:slope-conjecture}(a) holds simultaneously  on  all components of weight space.

\subsection{}
%One may also consider the case of forms on $\Gamma_1(N)$ with a fixed nebentype character . In this case, the reader may check that the proof of Theorem B carries over nearly verbatim.  However, our proofs of Corollaries A1/2 do not generalize to this case.  I

One may also consider level $\Gamma_1(N)$. In that case, the results stated here in the introduction remain true. Namely, Theorem A is obviously true still and its corollaries are nearly formal. The reader may check that the proof of Theorem B carries over as well.

However if one fixes a tame character, or one works over odd components of $p$-adic weight space, then our method of proof does not generalize to handle Corollaries A1 and A2.  Indeed, the basic method we use begins by examining the subspace of level 1 forms which only contributes to forms with trivial nebentype on even components.  If one could prove Corollary A2 in this case, then our proof of Theorem B would carry over. We do though expect these results to hold in general.

%However, we have not worked in that generality for two reasons. First, we made all the computer calculations and numerical observations using $\Gamma_0(N)$ rather than $\Gamma_1(N)$ because it was simpler to program into a computer. More importantly, the most useful generalization to $\Gamma_1(N)$ would also include fixing a tame character. But, it is not clear how to deduce Corollary A1/2 in that case. This cause a problem because Corollary A2 is used in the proof that Conjecture \ref{conj:slope-conjecture}(a) implies Conjecture \ref{conj:slope-conjecture}(b) 

\subsection*{Acknowledgements}
The authors would like to thank Kevin Buzzard for extraordinary interest, encouragement and comments on an early draft of this paper. We would also like to thank Liang Xiao for several discussions pertaining to the two papers \cite{WanXiaoZhang-Slopes, LiuXiaoWan-IntegralEigencurves}. We owe Vincent Pilloni for discussing the application(s) of Theorem A to the work \cite{AndreattaIovitaPilloni-Halo}. We duly thank Fabrizio Andreatta, Matthew Emerton, Adrian Iovita and Glenn Stevens for helpful conversations. Finally, we thank the anonymous referee for their helpful comments and suggestions. The first author was partially supported by NSF grant DMS-1402005 and the second author was partially supported by NSF grant DMS-1303302.

\section{The trace formula and $\mu$-invariants}\label{sec:mu=0}

In this section we will prove Theorem A (i.e.\ that $\mu(a_{i,\eta})=0$ in tame level 1).  The key to our proof is 
to explicitly write down the characteristic power series of $U_p$,
not with coefficients in $\Z_p[\![w]\!]$, but in a coordinate free manner over $\Z_p[\![\Gamma]\!]$.  For this we will use an explicit formula for the traces of Hecke operators \cite{Koike-padicProperties,  Hijikata-TraceFormula}.

\subsection{}
Consider the Iwasawa ring $\Z_p[\![\Z_p^\times]\!]$. If $\rho \in \Z_p^\times$ then the function $\kappa \mapsto \kappa(\rho)$ defines an analytic function $[\rho]$ on the weight space $\cal W$, and thus gives a 
natural inclusion $\Z_p[\![\Z_p^\times]\!] \inject \cal O(\cal W)$. Natural here refers to the canonical decompositions of $\Z_p[\![\Z_p^\times]\!]$ and $\cal O(\cal W)$ as we now explain.

Given a character $\eta: \Delta \goto \C_p^\times$, the group algebra $\Z_p[\Delta]$ comes equipped with a map $\Z_p[\Delta] \goto \Z_p$ which evaluates group-like elements on $\eta$. If $\rho \in \Z_p^\times$ let $\bar \rho$ be its image in $\Delta$ under the splitting $\Z_p^\times \iso \Delta \times \Gamma$. For $\eta \in \hat{\Delta}$ we write $[\rho]_{\eta} := \eta(\bar \rho)\cdot \left[\rho\cdot \omega(\rho)^{-1}\right] \in \Z_p[\![\Gamma]\!]$ where $\omega$ denotes the Teichm\"uller character.\footnote{We explicitly see $\omega$ as the composition $\Z_p^\times \surject \Delta \subset \Z_p^\times$, making $\Z_p^\times$ the domain of $\omega$ in order to make $\omega$ a $p$-adic weight. By construction one has $\omega(\bar \rho) = \omega(\rho)$ for all $\rho \in \Z_p^\times$ and thus $\rho\cdot \omega(\rho)^{-1} \in \Gamma$ for such $\rho$.}  Then we have a $\Z_p$-module embedding
\begin{align}
\Z_p[\![\Z_p^\times]\!] &\inject \bigdsum_{\eta} \Z_p[\![\Gamma]\!]\label{eqn:rootsunity}\\
[\rho] &\mapsto \left([\rho]_{\eta}\right)_{\eta \in \hat{\Delta}}\nonumber
\end{align}
This map is an isomorphism for $p > 2$, or after inverting $p$. Similarly, the components of weight space are also indexed by characters $\eta$, and the canonical embedding above preserves the components.  That is, we have a commuting diagram
\begin{equation*}
\xymatrix{
\Z_p[\![\Z_p^\times]\!] \ar@{^{(}->}[d]  \ar@{^{(}->}[r] & \bigdsum_{\eta} \Z_p[\![\Gamma]\!] \ar@{^{(}->}[d]\\
 \cal O(\cal W) \ar@{=}[r]& \bigdsum_{\eta} \cal O(\cal W_\eta).
 }
\end{equation*}

\subsection{}
It is well-known that there is an isomorphism of rings
\begin{align*}
\Z_p[\![\Gamma]\!] &\overto{\simeq} \Z_p[\![w]\!]\\
[\gamma] & \mapsto w+1.
\end{align*}
Thus we can define the $\mu$ and $\lambda$-invariants of elements of $\Z_p[\![\Gamma]\!]$ via this isomorphism. They do not depend on the choice of $\gamma$.
\begin{lemma}\label{lemma:mu}
If $f = \sum_g c_g [g] \in \Z_p[\![\Gamma]\!]$ with $c_g = 0$ for all but finitely many $g$, then $\mu(f) = 0$ if and only if there exists some $g \in \Gamma$ such that $c_g \in \Z_p^\x$.
\end{lemma}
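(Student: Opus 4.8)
The plan is to read off the $\mu$-invariant through the ring isomorphism $\Z_p[\![\Gamma]\!] \xrightarrow{\ \sim\ } \Z_p[\![w]\!]$ sending $[\gamma]$ to $1+w$, and thereby reduce the assertion to an elementary linear-algebra fact over $\F_p$. Let $c\colon \Gamma \xrightarrow{\ \sim\ } \Z_p$ be the topological isomorphism with $c(\gamma)=1$, so that $[g] \in \Z_p[\![\Gamma]\!]$ maps to $(1+w)^{c(g)} = \sum_{j\geq 0}\binom{c(g)}{j} w^j \in \Z_p[\![w]\!]$. Then $f = \sum_g c_g[g]$ maps to $F(w) := \sum_g c_g (1+w)^{c(g)}$, and since for nonzero series $\mu(F)=0$ holds exactly when $F$ is not divisible by $p$ — i.e.\ when the reduction $\bar F(w) = \sum_g \bar c_g (1+w)^{c(g)} \in \F_p[\![w]\!]$ is nonzero — the lemma amounts to showing that $\bar F \neq 0$ if and only if some $\bar c_g \neq 0$ in $\F_p$.

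One implication is immediate: if every $c_g$ lies in $p\Z_p$ then $f \in p\Z_p[\Gamma]$ — here one uses crucially that the sum is \emph{finite} — so $F(w)\in p\Z_p[\![w]\!]$ and $\mu(f) > 0$. For the converse, assume some $\bar c_g \neq 0$. The finitely many exponents $c(g)$ that occur are distinct elements of $\Z_p$, so one may choose $n$ large enough that they remain distinct modulo $p^n$; let $b_g \in \{0,1,\dots,p^n-1\}$ represent $c(g)\bmod p^n$. From the congruence $(1+w)^{p^n}\equiv 1+w^{p^n}\pmod p$ one gets $(1+w)^{c(g)}\equiv (1+w)^{b_g}\pmod{(p,\,w^{p^n})}$, hence $\bar F(w)\equiv \sum_g \bar c_g (1+w)^{b_g}\pmod{w^{p^n}}$ in $\F_p[\![w]\!]$. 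Since $(1+w)^{0},(1+w)^{1},\dots,(1+w)^{p^n-1}$ form an $\F_p$-basis of $\F_p[w]/(w^{p^n})$ — the change-of-basis matrix from $1,w,\dots,w^{p^n-1}$ is lower-triangular unipotent, with entries $\binom{b}{j}$ — the sum on the right, having distinct ``support'' $\{b_g\}$ and at least one nonzero coefficient, is nonzero modulo $w^{p^n}$. Therefore $\bar F \neq 0$, i.e.\ $\mu(f)=0$.

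I do not expect a genuine obstacle here: the statement is elementary, and the only point requiring care is organizing the argument so that finiteness of the support of $f$ is actually used — this is precisely what permits replacing the $p$-adic exponents $c(g)$ by honest integers $b_g$ after truncating modulo a suitable power of $w$, turning the question into ordinary linear algebra over $\F_p$. Equivalently, the converse direction is the injectivity of the canonical map $\F_p[\Gamma] \to \F_p[\![\Gamma]\!]$, which holds because $\Gamma \cong \Z_p$ is torsion-free (so distinct elements of $\Gamma$ stay distinct in some finite quotient $\Gamma/\Gamma^{p^n}$, where distinct group elements are $\F_p$-linearly independent); but the direct truncation computation above is just as quick.
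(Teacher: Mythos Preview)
Your proof is correct and is essentially the same argument as the paper's, just transported across the isomorphism $\Z_p[\![\Gamma]\!]\cong\Z_p[\![w]\!]$. The paper argues directly in the group algebra: choose $n$ so that the finitely many $g$ with $c_g\neq 0$ have distinct images in $\Gamma/\Gamma^{p^n}$, and observe that these images are part of a $\Z_p$-basis of $\Z_p[\Gamma/\Gamma^{p^n}]$, so the reduction of $f$ there is nonzero mod $p$ once some $c_g$ is a unit. Your truncation modulo $(p,w^{p^n})$ is exactly this step on the power-series side, since $\F_p[\Gamma/\Gamma^{p^n}]\cong \F_p[w]/((1+w)^{p^n}-1)=\F_p[w]/(w^{p^n})$ and the basis $\{[\bar g]\}$ becomes $\{(1+w)^{b_g}\}$; indeed, you note this equivalence yourself in the final paragraph.
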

\begin{proof}
One direction is clear, so let us prove that if some $c_g$ is a unit then $\mu(f) = 0$.
Let $[g_1],\dotsc,[g_r]$ be the terms of $f$ with $c_{g_i} \neq 0$. Choose an $n$ sufficiently large so that the images $\bar g_i$ of $g_i$ in $\Gamma/\Gamma^{p^n}$ are all distinct.  Then the image of $\bar f$ in the quotient $\Z_p[\Gamma/\Gamma^{p^n}]$ is given by $\sum c_{g_i}[\bar g_i]$. But the $[\bar g_i]$ are all distinct  and form a subset of a basis for the finite free $\Z_p$-module $\Z_p[\Gamma/\Gamma^{p^n}]$. 
In particular, if some $c_{g_i}$ is a unit, then $\bar f \not \congruent 0 \bmod p$, and hence $f \not \congruent 0 \bmod p$.
\end{proof}
\begin{remark}
The same argument shows that if $f\neq 0$ is as in the lemma then 
\begin{equation*}
\mu(f) = \min\set{n \st  c_g \congruent 0 \bmod p^n \text{ for all $g$}}.
\end{equation*}
\end{remark}

\subsection{}
%Rather than write down the characteristic power series of $U_p$ over each component, we will give them simultaneously in the ring $\Z_p[\![\Z_p^\times]\!][\![t]\!]$. In order to do that, we need some notation. 

If $s,j\geq 1$ are integers and $p \ndvd s$ then the quadratic polynomial $\Phi_{s,j}(X) := X^2 - sX + p^j$ factors (say over $\bar \Q$) as
\begin{equation*}
X^2 - sX + p^j = (X-\rho_{s,j})(X-\bar \rho_{s,j}).
\end{equation*}
Under our fixed embedding $\bar \Q \inject \bar \Q_p$, this polynomial has exactly one root which is a $p$-adic unit; we label this root by $\rho_{s,j}$.  Viewing this polynomial over $\C$,  its roots are complex conjugates when $1 \leq s < 2p^{j/2}$, in which case the complex absolute value of $\rho_{s,j}$ is $p^{j/2}$. We make a slightly more general observation in the following lemma.

\begin{lemma}\label{lemma:rhoe}
If $\bf e = \set{e(s,j)}$ is a finite collection of positive integers, depending on pairs $(s,j)$ such that $1 \leq s < 2p^{j/2}$, then $\rho_{\bf e} := \prod_{s,j} \rho_{s,j}^{e(s,j)}$ is not a root of unity.
\end{lemma}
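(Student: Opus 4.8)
The plan is to compare two different archimedean absolute values. Fix the complex embedding of $\bar{\Q}$ that makes $\rho_{s,j}$ and $\bar\rho_{s,j}$ complex conjugates whenever $1 \leq s < 2p^{j/2}$; under that embedding each factor $\rho_{s,j}$ in the product has complex absolute value $p^{j/2}$, so
\[
\left|\rho_{\bf e}\right|_{\C} = \prod_{s,j} p^{\,j\, e(s,j)/2} = p^{\,E/2}, \qquad E := \sum_{s,j} j\, e(s,j) \geq 1,
\]
since all $e(s,j)$ are positive and the collection $\bf e$ is non-empty (each $j \geq 1$). In particular $|\rho_{\bf e}|_\C > 1$, so $\rho_{\bf e}$ is not a root of unity: any root of unity has absolute value $1$ under every archimedean embedding. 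That is the whole argument.

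The only point requiring care is the claim that $|\rho_{s,j}|_\C = p^{j/2}$, which is exactly the computation recalled just before the lemma: the polynomial $\Phi_{s,j}(X) = X^2 - sX + p^j$ has discriminant $s^2 - 4p^j < 0$ under the hypothesis $1 \leq s < 2p^{j/2}$, so its two roots are a complex-conjugate pair whose product is the constant term $p^j$; hence each has squared absolute value $p^j$. Note that this uses the complex embedding, whereas the labeling of the $p$-adic unit root $\rho_{s,j}$ (as opposed to $\bar\rho_{s,j}$) used the fixed $p$-adic embedding — but for the purpose of this lemma it is irrelevant which of the two conjugate roots is called $\rho_{s,j}$, since both have the same complex absolute value.

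I do not anticipate a real obstacle here; the statement is essentially a bookkeeping consequence of the preceding paragraph's observation. The one thing worth stating explicitly in the write-up is why the exponent $E$ is strictly positive: the hypothesis is that $\bf e$ is a \emph{finite collection of positive integers} indexed by \emph{some} pairs $(s,j)$ — so implicitly at least one pair occurs — and each such $j$ satisfies $j \geq 1$, forcing $E \geq 1$ and $|\rho_{\bf e}|_\C = p^{E/2} \geq p^{1/2} > 1$.
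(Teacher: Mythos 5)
Your argument is correct and is essentially identical to the paper's proof: both observe that $\lvert\rho_{s,j}\rvert_\infty = p^{j/2} > 1$ under any complex absolute value (since the conjugate roots of $X^2 - sX + p^j$ have product $p^j$), hence the product $\rho_{\bf e}$ has archimedean absolute value strictly greater than $1$ and cannot be a root of unity. Your extra remarks on the labeling of the unit root and the positivity of the exponent are fine but not needed beyond what the paper already records.
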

\begin{proof}
We just explained that $\abs{\rho_{s,j}}_\infty = p^{j/2}> 1$ for any choice of complex absolute value $\abs{-}_{\infty}$. In particular, $\abs{\rho_{\bf e}}_{\infty} > 1$ as well, meaning that $\rho_{\bf e}$ cannot be a root of unity.
\end{proof}

\subsection{}
The quadratic surds $\rho_{s,j}$ and $\bar \rho_{s,j}$ classically play a role in explicit formulae for the trace of $T_{p^j}$ acting on spaces $S_k(\Gamma_0(N))$ of cuspidal modular forms. Here we recall what happens when one passes to the space of overconvergent cuspforms for $\Gamma_0(N)$.

\begin{theorem}[Koike's formula]\label{thm:koike}
Suppose that $N,j \geq 1$ are integers. Then there exists constants $c_N(p,j)$ and $c_N(p,s,j)$ in $\Z_p$ such that for each even integer $k$, 
\begin{equation*}
\tr(\restrict{U_p^j}{S_k^{\dagger}(N)}) = -c_N(p,j) - \sum_{\substack{1 \leq s < 2p^{j/2}\\ p \ndvd s}} {c_N(p,s,j)\over \rho_{s,j}^2 - p^j}\rho_{s,j}^k.
\end{equation*}
\end{theorem}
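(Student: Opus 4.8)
The statement is a $p$-adic limit of the classical Eichler--Selberg trace formula, so my approach is to start from the exact formula for $\operatorname{tr}(T_{p^j} \mid S_k(\Gamma_0(N)))$ for a fixed even $k$, re-express it in terms of $U_p^j$ on the slope-$\leq k-1$ subspace, and then extract the part that varies $p$-adically analytically with $k$. The first step is to recall (or cite Hijikata) the Eichler--Selberg formula: $\operatorname{tr}(T_{p^j}\mid S_k(\Gamma_0(N)))$ is a sum of an ``identity'' contribution (nonzero only when $p^j$ is a perfect square, contributing something like a multiple of $p^{j(k/2-1)}$), an ``elliptic'' contribution indexed by integers $s$ with $s^2 < 4p^j$, each term being a class-number-type constant times $\frac{\rho_{s,j}^{k-1} - \bar\rho_{s,j}^{k-1}}{\rho_{s,j} - \bar\rho_{s,j}}$ where $\rho_{s,j},\bar\rho_{s,j}$ are the roots of $X^2 - sX + p^j$, plus a ``hyperbolic''/``dual'' term and a constant term, all of which I will need to track carefully with their exact $p$-adic shapes.

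**From $T_{p^j}$ to $U_p^j$ overconvergently.** The classical relation is $T_{p^j} = U_p^j + (\text{lower-order terms involving } \langle p\rangle p^{k-1} U_p^{j-2})$ on $S_k(\Gamma_0(Np))$, coming from the recursion $T_{p^{j}} = T_p T_{p^{j-1}} - p^{k-1}T_{p^{j-2}}$ and $T_p = U_p + p^{k-1}V_p$ (with $V_pU_p=1$). On the old/new decomposition one has $\operatorname{tr}(T_{p^j}\mid S_k(\Gamma_0(N))) = \operatorname{tr}(U_p^j \mid S_k(\Gamma_0(N))^{\text{some twist}}) + \dots$; more to the point, by Coleman's classicality (slope $< k-1$ forms are classical) and a standard computation comparing the $p$-stabilized space $S_k(\Gamma_0(Np))$ with $S_k(\Gamma_0(N))$, one gets $\operatorname{tr}(U_p^j \mid S_k^{\dagger}(N)_{<k-1}) = \operatorname{tr}(T_{p^j}\mid S_k(\Gamma_0(N))) - p^{(k-1)}(\text{stuff})$. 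The point is that as $k \to \infty$ $p$-adically, the contribution of slopes $\geq k-1$ is highly divisible by $p$ and vanishes in the limit, so $\operatorname{tr}(U_p^j\mid S_k^{\dagger}(N))$ equals the $p$-adically continuous interpolation of the classical trace. I would phrase this as: the function $k \mapsto \operatorname{tr}(U_p^j\mid S_k^{\dagger}(N))$ is the unique continuous (in fact analytic) function of $k \in \mathcal W(\mathbf{Z}_p)$ agreeing with the modified classical traces, and then compute its value from the Eichler--Selberg terms.

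**Identifying the coefficients.** Each elliptic term $\frac{\rho_{s,j}^{k-1}-\bar\rho_{s,j}^{k-1}}{\rho_{s,j}-\bar\rho_{s,j}}$ must be rewritten as a $\mathbf{Z}_p$-linear combination of $\rho_{s,j}^k$ (and possibly $\bar\rho_{s,j}^k$), but since $\rho_{s,j}\bar\rho_{s,j} = p^j$, powers of $\bar\rho_{s,j}$ are highly $p$-divisible and $\bar\rho_{s,j}^k = (p^j/\rho_{s,j})^k \to 0$; only the unit root survives in the limit. Writing $\rho_{s,j}^{k-1} = \rho_{s,j}^k\cdot\rho_{s,j}^{-1}$ and $\rho_{s,j}^{-1} = s p^{-j} - \bar\rho_{s,j}p^{-j}$... actually cleaner: $\rho_{s,j}^{-1} = (\rho_{s,j} - s)/(-p^j) \cdot(\dots)$ — I'd use $\rho_{s,j}^{-1} = \bar\rho_{s,j}/p^j$ and $\rho_{s,j}-\bar\rho_{s,j} = \rho_{s,j} - p^j/\rho_{s,j} = (\rho_{s,j}^2-p^j)/\rho_{s,j}$, so $\frac{\rho_{s,j}^{k-1}}{\rho_{s,j}-\bar\rho_{s,j}} = \frac{\rho_{s,j}^{k-1}\cdot\rho_{s,j}}{\rho_{s,j}^2-p^j} = \frac{\rho_{s,j}^{k}}{\rho_{s,j}^2-p^j}$, which is exactly the shape in the theorem; the $\bar\rho_{s,j}^{k-1}$ piece similarly contributes $-\frac{\bar\rho_{s,j}^k\cdot\text{unit}/p^j}{\dots}$ which is $p$-adically negligible but should be absorbed by adjusting the constants $c_N(p,s,j)\in\mathbf{Z}_p$ (they are $p$-adic limits of the class-number weights, hence in $\mathbf{Z}_p$). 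The identity/constant terms collect into $-c_N(p,j)$, with $c_N(p,j)\in\mathbf{Z}_p$ because the only genuinely $k$-dependent surviving piece there is $p^{j(k/2-1)}$ when $j$ is even, which $\to 0$, leaving a constant.

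**The main obstacle.** The real work — and the place I expect the most care is needed — is the passage to the limit and the claim that the interpolating constants $c_N(p,j),\,c_N(p,s,j)$ actually lie in $\mathbf{Z}_p$ (not merely $\mathbf{Q}_p$) and are independent of $k$. This requires: (i) controlling denominators in the class numbers/Hurwitz--Kronecker class-number weights appearing in Hijikata's formula and checking they are $p$-adically integral once one divides by $\rho_{s,j}^2-p^j$ appropriately; (ii) making rigorous that $\operatorname{tr}(U_p^j\mid S_k^{\dagger}(N))$ for a classical even weight $k$ equals $\operatorname{tr}(T_{p^j})$ minus a term that is $O(p^{k-1})$, using Coleman's control theorem and the compatibility of $U_p$ with the $p$-stabilization (this is where one must be careful about oldforms appearing with multiplicity two and the precise Euler factor); and (iii) invoking continuity/analyticity of $k\mapsto \operatorname{tr}(U_p^j\mid S_k^{\dagger}(N))$ from Coleman's theory so that the formula, once proven for all sufficiently large even $k$, holds for every even integer $k$. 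Steps (i) and (ii) are the crux; (iii) is essentially bookkeeping given the spectral theory already cited in the introduction.
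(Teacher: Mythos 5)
Your proposal follows essentially the same route as the paper's (sketched) proof: establish $\tr(U_p^j \mid S_k^{\dagger}(N)) = \lim_{n\to\infty}\tr(T_{p^j}\mid S_{k+p^n(p-1)}(\Gamma_0(N)))$ via Coleman's control theorem, the theory of newforms, and the analyticity of the trace over weight space, then evaluate the limit term by term in Hijikata's explicit formula, with only the unit root $\rho_{s,j}$ surviving and producing exactly $\rho_{s,j}^k/(\rho_{s,j}^2-p^j)$ as you compute. The one small slip is your remark that the $\bar\rho_{s,j}^{\,k-1}$ contributions are ``absorbed by adjusting the constants'': they are not absorbed anywhere, but simply tend to $0$ along the sequence $k+p^n(p-1)$ because $v_p(\bar\rho_{s,j})=j>0$, just as the identity and hyperbolic terms converge to the weight-independent constant $-c_N(p,j)$.
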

\begin{proof}
In the case $N=1$, this is \cite[Theorem 1]{Koike-padicProperties}.  
We quickly sketch the general argument since this is well-known to experts. A combination of Coleman's control theorem (see Theorem \ref{thm:coleman-control}(a) below), the theory of newforms, and the gluing of spaces of overconvergent cuspforms over weight space implies that for each integer $k$, 
\begin{equation}\label{eqn:limit}
\tr(\restrict{U_p^j}{S_{k}^{\dagger}(N)}) = \lim_{n\goto \infty} \tr(\restrict{T_{p^j}}{S_{k+p^n(p-1)}(\Gamma_0(N))}),
\end{equation}
the limit being a $p$-adic limit. For $n$ sufficiently large, $k+p^n(p-1) \geq 2$ and so one can evaluate the limit using the explicit trace formula Hijikata gives in \cite[Theorem 0.1]{Hijikata-TraceFormula}. The limit is easily computed from the description given by Hijikata once we note that the Hecke operators there are off by a factor of $p^{k/2-1}$ from ours (compare with \cite[Section 2.2]{Koike-padicProperties}).
\end{proof}
\begin{remark}
We will make explicit these constants in the case $N=1$ in the following section and in general in Appendix \ref{app:constants}.  For now, we just note that each $c_N(-)$ is a $p$-adically integral algebraic number which a computer can effectively compute.
\end{remark}

\subsection{}
For the remainder of Section \ref{sec:mu=0} we fix an even component $\cal W_{\eta}$. For each $j\geq 1$, we denote by $T_j \in \cal O(\cal W_{\eta})$ the function $\kappa \mapsto \tr(\restrict{U_p^j}{S_{\kappa}^{\dagger}(N)})$. Koike's formula shows that $T_j$ lies in the subring $\Z_p[\Gamma] \ci \Z_p[\![\Gamma]\!] \ci \cal O(\cal W_{\eta})$.

\begin{theorem}[Koike's formula II]\label{thm:koike-iwasawa}
If $j\geq 1$ then $T_j \in \Z_p[\Gamma]$ and is given by
\begin{equation*}
T_j = -c_N(p,j)[1]_{\eta} - \sum_{\substack{1 \leq s < 2p^{j/2}\\ p \ndvd s}} {c_N(p,s,j)\over \rho_{s,j}^2 - p^j}[\rho_{s,j}]_{\eta}
\end{equation*}
where the notation is the same as in Theorem \ref{thm:koike}.
\end{theorem}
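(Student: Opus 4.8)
The plan is to pin down $T_j$ by interpolating Koike's formula (Theorem~\ref{thm:koike}) across the algebraic weights lying in the fixed component $\cal W_\eta$. Write $R_j \in \cal O(\cal W_\eta)$ for the element defined by the proposed right-hand side. The first step is to check that $R_j$ genuinely lands in $\Z_p[\Gamma]$. For each admissible pair $(s,j)$ the reduction $\Phi_{s,j}(X) \equiv X(X-s) \pmod p$ is a factorization into coprime factors since $p \nmid s$, so Hensel's lemma splits $\Phi_{s,j}$ over $\Z_p$ with $\rho_{s,j} \in \Z_p^\times$ the unit root already fixed above. Hence the symbol $[\rho_{s,j}]_\eta = \eta(\bar\rho_{s,j})\,[\rho_{s,j}\omega(\rho_{s,j})^{-1}]$ is a well-defined element of $\Z_p[\Gamma]$ (a single group-like element attached to $\rho_{s,j}\omega(\rho_{s,j})^{-1} \in \Gamma$, scaled by $\eta(\bar\rho_{s,j}) \in \Z_p$), and $\rho_{s,j}^2 - p^j = \rho_{s,j}(\rho_{s,j} - \bar\rho_{s,j})$ is a product of two units of $\Z_p$, hence a unit, so the scalar $c_N(p,s,j)/(\rho_{s,j}^2 - p^j)$ is $p$-integral. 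Together with $[1]_\eta = [1]$ this gives $R_j \in \Z_p[\Gamma] \subset \cal O(\cal W_\eta)$.

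Next I would record the elementary compatibility: for $\rho \in \Z_p^\times$ and $\kappa \in \cal W_\eta$, using $\restrict{\kappa}{\Delta} = \eta$ and $\omega(\rho) = \bar\rho$ inside $\Z_p^\times$, one has $[\rho]_\eta(\kappa) = \eta(\bar\rho)\,\kappa(\rho\,\omega(\rho)^{-1}) = \kappa(\bar\rho)\,\kappa(\rho\bar\rho^{-1}) = \kappa(\rho) = [\rho](\kappa)$. This is exactly the commutativity of the square displayed just above, evaluated at points. Consequently, for every even integer $k$ with $z^k \in \cal W_\eta$ (equivalently $\omega^k = \eta$ as characters of $\Delta$), evaluating $R_j$ at $\kappa = z^k$ reproduces the right-hand side of Koike's formula, so
\[
R_j(z^k) = -c_N(p,j) - \sum_{\substack{1 \leq s < 2p^{j/2}\\ p\ndvd s}} \frac{c_N(p,s,j)}{\rho_{s,j}^2 - p^j}\,\rho_{s,j}^k = \tr(\restrict{U_p^j}{S_k^{\dagger}(N)}) = T_j(z^k).
\]

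It then remains to upgrade this agreement at algebraic weights to an identity of analytic functions. Since $\eta$ is even, the set $\Sigma_\eta$ of algebraic weights $z^k$ in $\cal W_\eta$ consists entirely of even weights (because $\omega(-1) = -1$) and is infinite. Its weight coordinates $w(z^k) = \gamma^k - 1$ all lie in the compact set $p\Z_p$ (as $\gamma \equiv 1 \bmod p$), hence accumulate at a point of $p\Z_p$, which lies in the interior of the open disc $\cal W_\eta$. By the second step, $R_j - T_j \in \cal O(\cal W_\eta)$ vanishes on $\Sigma_\eta$, so by the identity principle for rigid analytic functions on a disc it vanishes identically. This yields $T_j = R_j$, and in particular $T_j \in \Z_p[\Gamma]$, proving the theorem. (If one wishes to avoid rigid analysis, one may first observe that $T_j$ is bounded by $1$ since the $U_p$-eigenvalues satisfy $\abs{\alpha_i}_p \leq 1$, whence $T_j \in \cal O_{\C_p}[\![w]\!]$, and then conclude via Weierstrass preparation, which forces a nonzero such series to have only finitely many zeros in $p\Z_p$.)

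The genuinely nontrivial input is Koike's trace formula itself, which we are taking as given; everything else is bookkeeping, so the ``main obstacle'' in my argument is really just the interpolation step, which is routine given the density of classical weights. The two points that nonetheless deserve care are the Hensel argument that legitimizes writing $[\rho_{s,j}]_\eta$ (without it the symbols live in a ramified extension and the $\Z_p$-integrality becomes murkier) and the $\Delta$-component check that $\Sigma_\eta$ is infinite for every even $\eta$ — both are straightforward, but they are exactly what make the comparison with Theorem~\ref{thm:koike} go through.
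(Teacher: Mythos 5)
Your argument is correct and is essentially the paper's own proof: evaluate the proposed Iwasawa element at the algebraic weights $z^k$ in $\cal W_\eta$, match with Theorem \ref{thm:koike}, and conclude by accumulation of these weights together with the analyticity of $T_j$. The extra details you supply (Hensel's lemma placing $\rho_{s,j}$ in $\Z_p^\times$, the unit computation $\rho_{s,j}^2-p^j=\rho_{s,j}(\rho_{s,j}-\bar\rho_{s,j})$, and the compatibility $[\rho]_\eta(\kappa)=\kappa(\rho)$) are all correct and merely make explicit what the paper leaves implicit.
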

\begin{proof}
When we evaluate the right-hand side of the given formula at $z^k \in \cal W_{\eta}$ we  get $T_j(z^k)$ by Theorem \ref{thm:koike}. Since the integers inside $\cal W_{\eta}$ accumulate on themselves, the formula follows from the analyticity of $T_j$.
\end{proof}

\subsection{}\label{subsec:koikeN=1}
In order to prove Theorem A, we need to make the constants explicit in Koike's formula when $N=1$.  The constants arise from the explicit form of the Eichler--Selberg trace formula for the Hecke operators $T_{p^j}$ on cuspforms for $\SL_2(\Z)$ given by Zagier in an appendix to Lang's book \cite{Lang-IntroModularForms} (see also the correction \cite{Zagier-Correction}).  

We denote by $H: \Z \goto \Q$ the Hurwitz class numbers defined by $H(0) = -1/12$, $H(n) = 0$ if $n < 0$ and $H(n)$ is the number of equivalence classes of positive definite binary quadratic forms of discriminant $-n$ counted with certain multiplicities (depending on $n \bmod 3$). Then, it is easy to compute from \cite[Theorem 2, p.\ 48]{Lang-IntroModularForms} and \eqref{eqn:limit} that the constants in Koike's formula are given by
\begin{align*}
c_1(p,j) &= 1,\\
c_1(p,s,j) &= {H(4p^j - s^2)}.
\end{align*}
Specializing Theorem \ref{thm:koike-iwasawa} to the case $N = 1$ we get
\begin{corollary}[$N=1$]\label{cor:Tjiwasawa}
If $j\geq 1$ then 
%the function $T_j(\kappa) = \tr(\restrict{U_p^j}{S_{\kappa}^{\dagger}(1)})$ is given by
\begin{equation*}
T_j = -[1]_{\eta} - \sum_{\substack{1 \leq s < 2p^{j/2}\\p\ndvd s}} {H(4p^j - s^2) \over \rho_{s,j}^2 - p^j}[\rho_{s,j}]_{\eta}.
\end{equation*}
\end{corollary}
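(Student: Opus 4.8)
The plan is to deduce Corollary~\ref{cor:Tjiwasawa} directly by substituting the explicit values of the constants $c_1(p,j)$ and $c_1(p,s,j)$ computed above into the coordinate-free version of Koike's formula recorded in Theorem~\ref{thm:koike-iwasawa}. Thus the real content is the identification
\[
c_1(p,j) = 1, \qquad c_1(p,s,j) = H(4p^j - s^2),
\]
and Theorem~\ref{thm:koike-iwasawa} then delivers the stated formula with $N = 1$ verbatim.

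First I would recall the Eichler--Selberg trace formula for $\tr(T_{p^j} \mid S_k(\SL_2(\Z)))$ in the normalization of \cite[Theorem~2, p.~48]{Lang-IntroModularForms}, correcting it via \cite{Zagier-Correction}. This expresses the trace as a sum of an ``identity'' term, an ``elliptic'' term indexed by integers $s$ with $s^2 < 4p^j$ and weighted by Hurwitz class numbers $H(4p^j - s^2)$, a ``hyperbolic'' term indexed by the divisors $d \mid p^j$ with $d \ne p^{j/2}$, and (when $p^j$ is a perfect square) a ``parabolic'' term. Next I would take the $p$-adic limit along $k \mapsto k + p^n(p-1)$ as in \eqref{eqn:limit}, being careful about the factor of $p^{k/2-1}$ relating Zagier's normalization of $T_{p^j}$ to the one used here (as flagged in the proof of Theorem~\ref{thm:koike}). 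Under this renormalization the elliptic term contributes, for each admissible $s$, a summand proportional to $\rho_{s,j}^k$ with coefficient $H(4p^j - s^2)/(\rho_{s,j}^2 - p^j)$ — here one uses that $\rho_{s,j} + \bar\rho_{s,j} = s$, $\rho_{s,j}\bar\rho_{s,j} = p^j$, and that $\bar\rho_{s,j}^k \to 0$ $p$-adically while $\rho_{s,j}^k \to \rho_{s,j}^{k_0}$ depends only on $k$ in the appropriate congruence class, so it is the analytic function $[\rho_{s,j}]_\eta$. The identity/constant term survives the limit as $-[1]_\eta$, i.e.\ $c_1(p,j) = 1$, while the hyperbolic and parabolic terms either vanish in the limit or get absorbed; matching the surviving terms against the shape of Theorem~\ref{thm:koike} pins down the constants.

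The main obstacle is bookkeeping rather than anything conceptual: one must correctly track the normalization factor $p^{k/2 - 1}$ through the limit (this is precisely where sign and power-of-$p$ errors creep in), confirm that the hyperbolic divisor-sum term and the parabolic term do not contaminate the constant $c_1(p,j)$ or the elliptic coefficients, and verify that the surviving elliptic coefficient is exactly $H(4p^j - s^2)/(\rho_{s,j}^2 - p^j)$ and not, say, its conjugate or a variant with $\bar\rho_{s,j}$. Once these are checked, Corollary~\ref{cor:Tjiwasawa} is immediate from Theorem~\ref{thm:koike-iwasawa}. One sanity check worth running is the case $j = 1$ and small $p$ (e.g.\ $p = 5$ or $7$) against known values of $\tr(U_p \mid S_k^\dagger(1))$ for a classical weight $k$, which both tests the constants and confirms the sign conventions.
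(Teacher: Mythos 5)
Your proposal is correct and follows essentially the same route as the paper: the paper likewise obtains the corollary by reading off $c_1(p,j)=1$ and $c_1(p,s,j)=H(4p^j-s^2)$ from the $p$-adic limit \eqref{eqn:limit} of Zagier's form of the Eichler--Selberg trace formula and then specializing Theorem \ref{thm:koike-iwasawa}. The bookkeeping you outline (the normalization $p^{k/2-1}$, the vanishing of $\bar\rho_{s,j}^{\,k}$ in the limit, and the identity $\rho_{s,j}-\bar\rho_{s,j}=(\rho_{s,j}^2-p^j)/\rho_{s,j}$ producing the denominator) is exactly the computation the paper leaves implicit.
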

%From this expression for $T_j$, we  see immediately that $\mu(T_j) = 0$ (as the coefficient of $[1]_{\eta}$ is $-1$).

\subsection{}
%Fix a weight component $\cal W_\eta \ci \cal W$ and consider the restriction $T_{j,\eta} := \restrict{T_j}{\cal W_{\eta}}\in\Z_p[\![\Gamma]\!]$ to $\cal W_{\eta}$. 
The characteristic power series $1 + \sum_{i\geq 1} a_{i}(\kappa)t^i := \det(1 - \restrict{tU_p}{S_{\kappa}^{\dagger}(N)})$ of $U_p$ over the weights in $\cal W_{\eta}$ can be computed in terms of $T_{j}$. Indeed, \cite[Corollaire 3]{Serre-CompactOperators} implies that for each $\kappa \in \cal W_{\eta}$,
\begin{equation*}
1 + \sum_{i\geq 1} a_{i}(\kappa)t^i = \det(1 - \restrict{tU_p}{S_{\kappa}^{\dagger}(N)}) = \exp\left(-\sum_{j\geq 1} \tr(\restrict{U_p^j}{S_{\kappa}^{\dagger}(N)}){t^j\over j}\right).
\end{equation*}
Unwinding, we get the classical symmetric functions identities
\begin{equation}\label{eqn:newton-relations}
a_{0} = 1 \;\;\;\; \;\;\;\; \;\;\;\; a_{i} = -{1 \over i} \sum_{j=1}^{i} a_{i-j}T_{j}
\end{equation}
(Note: The $a_i$ written here are the $a_{i,\eta}$ from the introduction.)
\subsection{}
We're now ready to show the vanishing of the $\mu$-invariants for the coefficients of the characteristic power series in tame level $N=1$.

\begin{theorem}[$N=1$]\label{thm:mu=0-intext}
%For each component $\cal W_{\eta} \ci \cal W$ and 
For each $i\geq 1$, $\mu(a_{i}) = 0$.
\end{theorem}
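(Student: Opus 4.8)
The plan is to work modulo $p$ and show that each $\bar a_i \in \F_p[\Gamma]$ is nonzero by exhibiting a unit coefficient in its group-algebra expansion. By Lemma~\ref{lemma:mu} and the recursion \eqref{eqn:newton-relations}, it suffices to understand the $T_j$ modulo $p$ well enough to control the leading behavior of $a_i = -\tfrac 1i\sum_{j=1}^i a_{i-j}T_j$. The key structural input is Corollary~\ref{cor:Tjiwasawa}: $T_j$ is an explicit $\Z_p$-linear combination of the group-like elements $[1]_\eta$ and $[\rho_{s,j}]_\eta$, with the coefficient of $[1]_\eta$ equal to $-1$ (a unit!) plus possibly a contribution from those $s$ with $\rho_{s,j}^2 = p^j$ — but that denominator only vanishes when the surd is non-unit, which is excluded, so in fact $-1$ is exactly the $[1]_\eta$-coefficient coming from the $c_N(p,j)$ term. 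So $T_j \equiv -[1]_\eta + (\text{other group-like terms}) \pmod p$.

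The first step is therefore to pin down, modulo $p$, the "non-trivial" part $\sum_{s} \tfrac{H(4p^j-s^2)}{\rho_{s,j}^2-p^j}[\rho_{s,j}]_\eta$ and argue that its group-like terms $[\rho_{s,j}]_\eta$ land in cosets of $\Gamma$ distinct from the coset of $[1]$ — or more precisely, that after reducing into a suitable finite quotient $\F_p[\Gamma/\Gamma^{p^n}]$ the basis vector indexed by the identity picks up a coefficient that is a $p$-adic unit. Concretely, $\rho_{s,j} \equiv s \pmod p$ (since $\rho_{s,j}\bar\rho_{s,j} = p^j \equiv 0$ and $\rho_{s,j}+\bar\rho_{s,j} = s$, with $\bar\rho_{s,j}$ the non-unit), and I'd track $\rho_{s,j}\omega(\rho_{s,j})^{-1} \in \Gamma$, whose image in $\Gamma/\Gamma^{p^n}$ is $\equiv 1 \pmod{p}$-close to the identity only for specific $s$. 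The cleanest route: set up the whole computation inside $\F_p[\![\Gamma]\!] \cong \F_p[\![w]\!]$, reduce \eqref{eqn:newton-relations} mod $p$, and prove by induction on $i$ that $\bar a_i \neq 0$ by showing the recursion cannot produce total cancellation — the $-[1]_\eta$ in each $T_j$ combines with $\bar a_{i-j}$, and one isolates the coefficient of a carefully chosen group-like element (or a low-order $w$-coefficient) that survives.

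The main obstacle I anticipate is exactly this cancellation control in the recursion: even though each $T_j$ individually has a unit coefficient, the alternating sum $-\tfrac 1i\sum a_{i-j}T_j$ could conceivably conspire to vanish mod $p$, and moreover the factor $\tfrac 1i$ is problematic when $p \mid i$. To handle the $p \mid i$ case one should clear denominators: multiply \eqref{eqn:newton-relations} through and instead analyze $i\,a_i + \sum_{j\geq 1} a_{i-j}T_j = 0$, extracting information at the first index where $p \nmid$ something, or pass to the $\exp$/Fredholm-determinant description directly and use that $\bar P(w,t)$, being a characteristic series of a nonzero compact operator, has positive-degree terms — then bootstrap from the non-vanishing of $\bar a_{i_0}$ for the smallest such $i_0$. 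I expect the actual argument to use the explicit shape of $H(4p^j-s^2)$ for small $j$ (e.g. $j=1$, where $T_1 = -[1]_\eta - \sum_{0<s<2\sqrt p} \tfrac{H(4p-s^2)}{\rho_{s,1}^2-p}[\rho_{s,1}]_\eta$) to seed the induction, so the class-number values entering at the base case, and the proof that distinct $\rho_{s,j}$ give distinct classes in a large enough finite quotient, are where the real work sits.
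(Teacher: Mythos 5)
You have the right skeleton — the recursion \eqref{eqn:newton-relations}, the observation that the $[1]_{\eta}$-coefficient of $T_j$ is the unit $-c_1(p,j)=-1$, and Lemma \ref{lemma:mu} as the finishing criterion — but the step you yourself flag as ``where the real work sits'' is left open, and the mechanisms you propose for closing it are not the ones that work. The paper's proof tracks the coefficient of $[1]_{\eta}$ \emph{exactly and integrally}, not merely mod $p$: one shows by induction that $a_i=[1]_{\eta}+\sum_{\rho_{\mathbf e}\neq 1}c_{\mathbf e}[\rho_{\mathbf e}]_{\eta}$, where $\rho_{\mathbf e}=\prod\rho_{s,j}^{e(s,j)}$ runs over products of the unit roots. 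The only way cancellation could occur is if some product $\rho_{\mathbf e}\rho_{\mathbf f}$ arising from $a_{i-j}T_j$ fell back into the class of $[1]_{\eta}$, i.e.\ if $\rho_{\mathbf e}\rho_{\mathbf f}\,\omega(\rho_{\mathbf e}\rho_{\mathbf f})^{-1}=1$, which happens exactly when $\rho_{\mathbf e}\rho_{\mathbf f}$ is a root of unity. This is excluded by an \emph{archimedean} argument (Lemma \ref{lemma:rhoe}): every $\rho_{s,j}$ has complex absolute value $p^{j/2}>1$, so no product of them with positive exponents can be a root of unity. Your proposed substitutes — separating the individual $\rho_{s,j}$ into distinct classes of $\Gamma/\Gamma^{p^n}$, or using the explicit values of $H(4p^j-s^2)$ for small $j$ — do not address this: it is \emph{products} of the surds that must avoid $1$ (distinctness of the individual $\rho_{s,j}$ is neither necessary nor sufficient), and the argument uses nothing about the class numbers beyond their $p$-integrality.

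The $p\mid i$ worry also dissolves once the bookkeeping is done this way. In $ia_i=-\sum_{j=1}^{i}a_{i-j}T_j$ each of the $i$ summands contributes exactly $-1\cdot(-1)=+1$ to the coefficient of $[1]_{\eta}$ and, by the root-of-unity argument, nothing else can contribute to it; hence that coefficient is exactly $i$, and after dividing it is exactly $1$. The remaining coefficients divided by $i$ stay in $\Z_p$ because $a_i$ is a priori an element of $\Z_p[\Gamma]$. Your alternative suggestions (bootstrapping from the smallest $i_0$ with $\bar a_{i_0}\neq 0$, or appealing to the Fredholm determinant having positive-degree terms) would at best show $\mu(a_i)=0$ for \emph{some} $i$, not for all $i$ as the theorem requires. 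As written, the proposal is a plan with a genuine gap at its central step.
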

\begin{proof}
Obviously if we define $a_0 = 1$ and $a_i$ recursively as $i a_i = - \sum_{j=1}^i a_{i-j}T_j$ then for reasons already stated we have $a_i \in \Z_p[\Gamma] \ci \cal O(\cal W_{\eta})$.

If $\bf e = \set{e(s,j)}$ is a collection of integers then we denote by $\rho_{\bf e} = \prod_{s,j} \rho_{s,j}^{e(s,j)} \in \Z_p^\times$, the same element as in Lemma \ref{lemma:rhoe}. We claim by induction on $i\geq 1$ that 
\begin{equation}\label{eqn:form}
a_i = [1]_{\eta} + \sum_{\rho_{\bf e}\neq 1} c_{\bf e}[\rho_{\bf e}]_{\eta} \in\Z_p[\Gamma]
\end{equation}
where $c_{\bf e} \neq 0$ for all but finitely many $\bf e$. If we prove this claim then the proof of the theorem is complete. Indeed, if $\rho_{\mathbf e} \neq 1$ then $\rho_{\mathbf e}/\omega(\rho_{\mathbf e}) \neq 1$ by Lemma \ref{lemma:rhoe}, and so $[\rho_{\mathbf e}]_{\eta}$ is not a scalar multiple of $[1]_{\eta}$. In particular, $\mu(a_i) = 0$ by Lemma \ref{lemma:mu}.

Let's prove the claim. For $i=1$ we have $a_1 = -T_1$ so the claim follows from Corollary \ref{cor:Tjiwasawa}. Suppose that $i > 1$. By Lemma \ref{lemma:rhoe}, if either $\rho_{\bf e}\neq 1$ or $\rho_{\bf e'}\neq 1$ then $\rho_{\bf e}\rho_{\bf e'} \neq 1$. Thus, by induction on $j=1,\dotsc,i-1$ and Corollary \ref{cor:Tjiwasawa} we see that
\begin{equation*}
a_{i-j}T_j = \left([1]_{\eta} + \sum_{\rho_{\bf e}\neq1} c_{\bf e}[\rho_{\bf e}]_{\eta}\right)\left(-[1]_{\eta} + \sum_{\rho_{\bf f}\neq 1} d_{\bf f}[\rho_{\bf f}]_{\eta} \right) = -[1]_{\eta} + \sum_{\rho_{\bf g}\neq 1} h_{\bf g}[\rho_{\bf g}]_{\eta}.
\end{equation*}
Using the recursive formula \eqref{eqn:newton-relations} for $ia_i$ we see that
\begin{equation*}
ia_i = i[1]_{\eta} + \text{ higher terms}
\end{equation*} 
and, dividing by $i$, this completes the proof of the claim.  (Note that we know {\em a priori} that the higher terms are all divisible by $i$ since $a_i$ is an Iwasawa function.)
%Recall that the restriction $a_{i,\eta}$ of $a_i$ to $\cal W_{\eta}$ is obtained by the projectors in \eqref{eqn:rootsunity} which involve scaling by a root of unity. Lemma \ref{lemma:rhoe} implies that if $\bf e \neq 0$ then $\rho_{\bf e}/\omega(\bar \rho_{\bf e}) \neq 1$. In particular, equation \eqref{eqn:form} implies that $a_{i,\eta} \in \Z_p[\Gamma]$ can be written in the form
%\begin{equation*}
%a_{i,\eta} = [1] + \sum_{g\neq 1} c_{g}[g]
%\end{equation*}
%for all $i\geq 1$ where almost all $c_g = 0$. But now the group-like element $[1]$ is witness to $a_{i,\eta} \not\equiv 0 \bmod p$ by Lemma \ref{lemma:mu}.
\end{proof}
\begin{remark}
It's a bit of a miracle that the trace formula yields such uniform information. If instead we worked on the full space $M_{\kappa}^{\dagger}(1)$ of overconvergent modular functions then the unique ordinary Eisenstein family would erase the group-like element $[1]$ in the Koike's formula. Suddenly, the vanishing of the $\mu$-invariants is ``not obvious''  even for the functions $T_j$, let alone the $a_i$. Staying with cuspforms but working in level $N > 1$, the same kind of issue arises as $c_N(p,j)$ need not be 1 (compare with Lemma \ref{lemma:const-term-special}).
\end{remark}

\section{Boundary slopes and arithmetic progressions}\label{sec:boundary-slopes}
In this section we  prove Theorem B; that is, we  prove that Conjecture \ref{conj:slope-conjecture}(a) implies Conjecture \ref{conj:slope-conjecture}(b,c). Throughout this section we implicitly assume every component $\cal W_{\eta}$ of weight space is even (since we work with level $\Gamma_0(N)$).

\subsection{}
 For each component $\cal W_\eta \ci \cal W$, we have a characteristic power series for cuspforms
\begin{equation*}
P(w,t) = 1 + \sum_{i=1}^\infty a_i(w)t^i \in 1 + t\Z_p[\![w,t]\!].
\end{equation*}
Here we set $a_i = a_{i,\eta}$ for ease of notation.  
We replicate here Conjecture \ref{conj:slope-conjecture}(a) which we are going to assume for all of Section \ref{sec:boundary-slopes}.
\begin{conjecture}\label{conj:in-text}
There exists $r>0$ such that if $\kappa \in \cal W_\eta$ and $0 < v_p(w(\kappa)) < r$ then the Newton polygon of $P(\kappa,t)$ depends only on $v_p(w(\kappa))$.  Moreover, on this region the indices of the  break points of the Newton polygon of $P(\kappa,t)$ are independent of $\kappa$.
\end{conjecture}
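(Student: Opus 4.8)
The plan is to separate Conjecture~\ref{conj:in-text} into (i) a uniform integrality input about the two-variable cuspidal characteristic series $P(w,t) = 1 + \sum_{i\geq 1} a_i(w)t^i \in \Z_p[\![w,t]\!]$ and (ii) a purely combinatorial deduction. The input I would aim for is the assertion $(\star)$: \emph{there is an $r>0$ such that, for every $i\geq 1$, every zero of $a_i(w)$ in the open disc $v_p(-)>0$ has valuation at least $r$.} Granting $(\star)$, fix $\kappa\in\cal W_\eta$ with $0 < v := v_p(w(\kappa)) < r$ and write $a_i = p^{\mu_i} f_i(w) u_i(w)$ by Weierstrass preparation, with $\mu_i = \mu(a_{i,\eta})$, $f_i$ distinguished of degree $\lambda_i = \lambda(a_{i,\eta})$, and $u_i \in \Z_p[\![w]\!]^\times$. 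Writing $f_i(w) = \prod_k (w-\alpha_{i,k})$ over $\bar\Q_p$, each $v_p(\alpha_{i,k}) \geq r > v$, so the ultrametric inequality gives $v_p(w(\kappa) - \alpha_{i,k}) = v$ and hence $v_p(a_i(w(\kappa))) = \mu_i + \lambda_i v$. Therefore the Newton polygon of $P(\kappa,t)$ is the lower convex hull of $\{(i,\,\mu_i+\lambda_i v) : i\geq 0\}$ (with $\mu_0 = \lambda_0 = 0$); this depends only on $v$, and it is an honest locally finite polygon since $U_p$ is compact, forcing $\mu_i + \lambda_i v \to \infty$. For the break-point indices, in tame level $1$ Theorem~A gives $\mu_i = 0$ for all $i$, so the Newton polygon of $P(\kappa,t)$ is exactly $v$ times the fixed lower hull of $\{(i,\lambda_i)\}$ — a vertical scaling — whence the vertex indices are independent of $\kappa$; for general $N$ one needs only the elementary point that each three-term vertex comparison among the $(i,\mu_i+\lambda_i v)$ is affine in $v$, hence changes sign at most once on $(0,r)$, so the vertex set stabilizes as $v \to 0^+$ after shrinking $r$ (together with a mild finiteness statement on the $(\mu_i,\lambda_i)$ to make this uniform).

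The genuinely hard step is $(\star)$, which is \cite[Conjecture 2.5]{WanXiaoZhang-Slopes} itself; this is why the paper, like \cite{LiuXiaoWan-IntegralEigencurves}, instead takes Conjecture~\ref{conj:in-text} as a hypothesis. I see three plausible routes. First, via \cite{AndreattaIovitaPilloni-Halo}: analyze the $w$-adic slopes of $\bar U_p$ on the characteristic-$p$ space $\bar M_{\bar\kappa_\eta}^{\dagger}(N)$ geometrically — canonical subgroups and overconvergence radii in characteristic $p$ — to pin down the $w$-adic Newton polygon of $\bar{P(w,t)}$, i.e.\ the numbers $\lambda_i$ at indices with $\mu_i = 0$, and then supply a lifting/flatness statement carrying this control integrally, and uniformly in $i$, to the zeros of $P(w,t)$. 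Second, via a ghost series, in the spirit of \cite{LiuXiaoWan-IntegralEigencurves}: build an explicit $G(w,t) = 1 + \sum g_i(w)t^i \in \Z_p[\![w,t]\!]$ whose $g_i$ are products of factors $(w-w_0)$ over classical (locally) algebraic weights with multiplicities coming from dimensions of spaces of classical cuspforms; verify $(\star)$ for $G$ by hand — by Lemma~\ref{lemma:weight-absolute-values} the only zeros close to $v_p(-)=0$ come from high conductor and appear only in high-degree $g_i$, where they cannot disturb the hull near the boundary — and then transfer to $P$ through the expected equality of the Newton polygons of $P$ and $G$. Third, a direct attack on the $a_i$ via Koike's formula (Theorem~\ref{thm:koike-iwasawa}), estimating $v_p(a_i(w_0))$ for $v_p(w_0)$ small from the explicit expansions of the $T_j$ in the group-like elements $[\rho_{s,j}]_\eta$.

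In all three routes the main obstacle is the same: the uniformity in $i$. Any single $a_i$ has its finitely many zeros trapped in an annulus bounded away from the boundary circle, but a priori these zeros could drift toward $v_p(-)=0$ as $i\to\infty$, and producing a lower bound on zero valuations \emph{independent of $i$} is exactly what is missing. Routes one and two reduce this to a comparison problem — lifting, respectively the equality of Newton polygons of $P$ and the ghost series — that is itself not presently known, while route three must contend with the cancellation inherent in passing from the traces $T_j$ to the coefficients $a_i$. So the realistic output of this plan is a clean reduction of Conjecture~\ref{conj:in-text} to $(\star)$, with $(\star)$ remaining the essential open input.
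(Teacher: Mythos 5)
There is nothing in the paper to check your argument against: the statement you were asked about is Conjecture~\ref{conj:in-text}, which the paper never proves. It is a restatement of Conjecture~\ref{conj:slope-conjecture}(a), known only for $p=2,3$ with $N=1$, and the entire point of Section~\ref{sec:boundary-slopes} is to \emph{assume} it and deduce parts (b) and (c). Your proposal correctly recognizes this and does not claim a proof, so the honest verdict is: no proof exists, yours included, and the essential gap is exactly where you locate it.

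That said, your reduction to $(\star)$ is not a new observation but is literally the route the authors themselves record as open: the question in Section~4.2 asks whether there is an $r>0$ with all zeros of all $a_i(w)$ uniformly confined to $v_p(-)\geq r$, and the authors note that a positive answer ``would prove Conjecture~\ref{conj:in-text} (if $N=1$ at least).'' The mechanism you describe --- $v_p(a_i(w_0))=\mu_i+\lambda_i v$ for $v<r$ by the ultrametric inequality --- is their Lemma~\ref{lemma:vals}, and their Examples~\ref{example:bad-zeros} and~\ref{example:zeros15} show that $(\star)$ already fails with $r=1$, so any such $r$ must be strictly smaller and no candidate is known. Two further cautions. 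First, note the authors' parenthetical ``if $N=1$ at least'': for general $N$ your passage from ``the polygon depends only on $v$'' to ``the break-point indices are independent of $v$'' is not the mild point you suggest. With infinitely many points $(i,\mu_i+\lambda_i v)$ and possibly positive $\mu_i$ (which do occur for $N>1$, cf.\ Example~\ref{example:p2N3}), the pairwise crossing values of $v$ could a priori accumulate at $0$, and ruling that out needs a genuine uniformity input (some control like Corollary~A2 on the spacing of indices with $\mu_i=0$), not just ``each comparison is affine in $v$.'' Second, your three proposed routes to $(\star)$ each terminate, as you admit, in a comparison or lifting statement that is itself the open content; in particular the ghost-series route only became a theorem (for quaternionic analogues) in later work. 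So the proposal is a correct and well-informed framing of why the statement is hard, but it should not be mistaken for progress beyond what the paper already says in Section~4.
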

Note that since $\cal W$ has finitely many components, Conjecture \ref{conj:in-text} is true for each component $\cal W_\eta$ if and only if it is true with one uniform $r > 0$ for all components. Note as well that the break points in Conjecture \ref{conj:in-text} may well depend on the component
but do not depend on the choice of $r$ (for which the conjecture is valid at least).

\subsection{}

In this section, using Corollary A2, we prove the first half of Theorem B.  Namely, we show that Conjecture \ref{conj:in-text} implies that $\mu(a_i) = 0$ whenever $i$ is one of the break points of the Newton polygon near the boundary.

We begin with a simple lemma on the $p$-adic valuation of values of Iwasawa functions.

\begin{lemma}
\label{lemma:vals}
Let $f \in \Z_p[\![\Gamma]\!]$ be non-zero, and assume all of the zeroes of $f$ in the open unit disc have valuation at least $r$.  Then for $z \in \C_p$ with $0 < v_p(z)<r$, we have 
$$
v_p(f(z)) = \mu(f) + \lambda(f) \cdot v_p(z).
$$
\end{lemma}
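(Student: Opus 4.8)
The plan is to use the Weierstrass preparation theorem to split off the relevant factors and then track valuations through a product. By the discussion preceding the lemma (the same Weierstrass decomposition recalled in the introduction), write $f = p^{\mu} g(w) u(w)$ under the isomorphism $\Z_p[\![\Gamma]\!] \iso \Z_p[\![w]\!]$, where $\mu = \mu(f)$, $g(w) = w^{\lambda} + \dotsb \in \Z_p[w]$ is a distinguished (monic, monomial mod $p$) polynomial of degree $\lambda = \lambda(f)$, and $u(w) \in \Z_p[\![w]\!]^{\times}$. The roots of $g$ in $\bar{\Q}_p$ are exactly the zeroes of $f$ in the open unit disc, so by hypothesis each such root $\alpha$ satisfies $v_p(\alpha) \geq r$.

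The key step is the valuation computation for $z \in \C_p$ with $0 < v_p(z) < r$. First, since $u$ is a unit in $\Z_p[\![w]\!]$, it converges on the open unit disc and $v_p(u(z)) = 0$ for any such $z$ (a unit power series has unit constant term and all higher terms are topologically nilpotent at $z$, so $u(z)$ reduces to a nonzero element mod the maximal ideal). Second, factor $g(w) = \prod_{m=1}^{\lambda}(w - \alpha_m)$ over $\bar{\Q}_p$; for each root, $v_p(\alpha_m) \geq r > v_p(z)$, so by the ultrametric inequality $v_p(z - \alpha_m) = v_p(z)$. Multiplying, $v_p(g(z)) = \lambda \cdot v_p(z)$. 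Combining the three pieces gives
\[
v_p(f(z)) = v_p(p^{\mu}) + v_p(g(z)) + v_p(u(z)) = \mu(f) + \lambda(f)\cdot v_p(z),
\]
as claimed.

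I do not expect any serious obstacle here — the statement is essentially a packaging of Weierstrass preparation plus the ultrametric inequality. The only point requiring a little care is the claim $v_p(u(z)) = 0$ for $z$ in the open unit disc: one should note that $u = c_0 + c_1 w + \dotsb$ with $c_0 \in \Z_p^{\times}$ and $c_i \in \Z_p$, so for $v_p(z) > 0$ each term $c_i z^i$ with $i \geq 1$ has positive valuation while $v_p(c_0) = 0$, whence $v_p(u(z)) = 0$ by the ultrametric inequality. Everything else is immediate, and the hypothesis $v_p(z) < r$ is used exactly once, to guarantee $v_p(z - \alpha_m) = v_p(z)$ for every root $\alpha_m$ of the distinguished polynomial.
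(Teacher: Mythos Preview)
Your proof is correct and follows essentially the same route as the paper: both use the Weierstrass factorization $f = p^{\mu}\prod_{m}(w-\alpha_m)\cdot u(w)$ and the ultrametric inequality $v_p(z-\alpha_m)=v_p(z)$ when $v_p(z)<r\le v_p(\alpha_m)$. The paper's version is simply terser, absorbing the unit and the Weierstrass decomposition into the first displayed equality without comment, whereas you spell out $v_p(u(z))=0$ explicitly.
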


\begin{proof}
Let $w_1, w_2, \dots, w_{\lambda(f)}$ denote the roots of $f$.  We have
\begin{align*}
v_p(f(z)) &= \mu(f) + \sum_{j=1}^{\lambda(f)} v_p(z - w_j) 
 = \mu(f) + \sum_{j=1}^{\lambda(f)} v_p(z) 
 = \mu(f) + \lambda(f) \cdot v_p(z)
 \end{align*}
 where the second equality follows since $v_p(z) < r \leq  v_p(w_j)$ for each $j$.
 \end{proof}

\begin{theorem}\label{thm:thmB(a)to(b)}
Conjecture \ref{conj:in-text} implies Conjecture \ref{conj:slope-conjecture}(b).
\end{theorem}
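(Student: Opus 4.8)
The plan is a proof by contradiction, whose only essential input is Corollary A2; once that is granted, the implication is pure Newton-polygon convexity together with Lemma \ref{lemma:vals}.

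Fix the even component $\cal W_\eta$, write $a_i := a_{i,\eta}$, and let $r > 0$ be a radius for which Conjecture \ref{conj:in-text} holds. Let $i_0 \geq 1$ be the index of a break point of the Newton polygon of $P(\kappa,t)$ occurring at some weight in the region $0 < v_p(w(\kappa)) < r$. Since Conjecture \ref{conj:in-text} asserts that the break point indices do not depend on $\kappa$, $i_0$ is a break point index for \emph{every} $\kappa \in \cal W_\eta$ with $0 < v_p(w(\kappa)) < r$; in particular $a_{i_0} \neq 0$ and, for all such $\kappa$, the point $\bigl(i_0, v_p(a_{i_0}(\kappa))\bigr)$ lies on the Newton polygon of $P(\kappa,t)$. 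Suppose towards a contradiction that $\mu := \mu(a_{i_0}) > 0$. Then Weierstrass preparation writes $a_{i_0} = p^{\mu} f(w) u(w)$ with $f \in \Z_p[w]$ monic and $u \in \Z_p[\![w]\!]$ a unit, so evaluating at any $\kappa$ with $v_p(w(\kappa)) > 0$ yields the crude bound $v_p(a_{i_0}(\kappa)) \geq \mu$.

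Next I would invoke Corollary A2: since $\mu(a_i) = 0$ for infinitely many $i$, fix an index $i_1 > i_0$ with $\mu(a_{i_1}) = 0$. Being a nonzero element of $\Z_p[\![\Gamma]\!]$, the series $a_{i_1}$ has only finitely many zeroes in the open unit disc, hence these all have valuation $\geq r_1$ for some $r_1 > 0$; Lemma \ref{lemma:vals} applied to $a_{i_1}$ then gives, for every $\kappa \in \cal W_\eta$ with $0 < v_p(w(\kappa)) < \min(r, r_1)$,
\[
v_p(a_{i_1}(\kappa)) = \mu(a_{i_1}) + \lambda(a_{i_1})\, v_p(w(\kappa)) = \lambda(a_{i_1})\, v_p(w(\kappa)).
\]
The contradiction is now immediate from convexity. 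Fix such a $\kappa$. The Newton polygon of $P(\kappa,t)$ is the lower convex hull of the points $\bigl(i, v_p(a_i(\kappa))\bigr)$, which includes $(0,0)$ since $a_0 = 1$ and includes $\bigl(i_1, v_p(a_{i_1}(\kappa))\bigr)$ since $v_p(a_{i_1}(\kappa))$ is finite; hence the Newton polygon lies on or below the chord joining these two data points, which at abscissa $i_0$ has height $(i_0/i_1)\, v_p(a_{i_1}(\kappa))$. Since $\bigl(i_0, v_p(a_{i_0}(\kappa))\bigr)$ lies \emph{on} the Newton polygon, combining the inequalities above gives
\[
\mu \;\leq\; v_p(a_{i_0}(\kappa)) \;\leq\; \frac{i_0}{i_1}\, v_p(a_{i_1}(\kappa)) \;=\; \frac{i_0}{i_1}\, \lambda(a_{i_1})\, v_p(w(\kappa)).
\]
Letting $v_p(w(\kappa)) \to 0^{+}$ through points of $\cal W_\eta$ (which exist, as $\cal W_\eta$ is an open $p$-adic unit disc) forces $\mu \leq 0$, contradicting $\mu > 0$. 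Therefore $\mu(a_{i_0}) = 0$, and since $i_0$ was an arbitrary break point index for the region $0 < v_p(-) < r$, this is precisely Conjecture \ref{conj:slope-conjecture}(b). Note the argument handles the break point indices one at a time, so it is unaffected by there being (possibly) infinitely many of them.

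I expect the main obstacle to be entirely encapsulated in Corollary A2 — equivalently, in the non-polynomiality of $\bar{P(w,t)}$, i.e. the fact that $\mu(a_i) = 0$ for arbitrarily large $i$. Without a coefficient $a_{i_1}$ of this kind there is nothing to anchor the chord estimate, and the implication (a) $\Rightarrow$ (b) would be vacuous in spirit; everything downstream is elementary convexity of Newton polygons plus the linear growth law of Lemma \ref{lemma:vals}. The one point to state with care is that one needs $v_p(w(\kappa))$ smaller only than the finitely many zeroes of the \emph{single} series $a_{i_1}$ — no bound uniform over all $i$ is required — which is exactly what legitimizes the passage to the limit $v_p(w(\kappa)) \to 0$.
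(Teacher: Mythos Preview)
Your proof is correct and follows essentially the same approach as the paper's: Corollary~A2 supplies an anchor index $i_1$ with $\mu(a_{i_1})=0$, convexity of the Newton polygon bounds $v_p(a_{i_0}(\kappa))$ above by a quantity linear in $v_p(w(\kappa))$, and sending $v_p(w(\kappa))\to 0$ forces $\mu(a_{i_0})=0$. The paper argues by induction on the break points, using the previous break point $i_{k-1}$ as the left endpoint of the comparison chord (which is why it needs $\mu(a_{i_{k-1}})=0$ as an inductive hypothesis), whereas you anchor every chord at $(0,0)$; this sidesteps the induction entirely and is a small but genuine streamlining.
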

\begin{proof}
Let $I = \set{i_1 < i_2 < \dotsb}$ denote the break points of the Newton polygon of $P(w_0,t)$ over the region $0 < v_p(w_0) < r$. 
Formally set $i_0 = 0$ and $a_0 = 1$. We prove by induction on $k\geq 0$ that $\mu(a_{i_k}) = 0$. The case where $k = 0$ is by choice of $a_0$. 

Now suppose that $k \geq 1$. By Corollary A2, we may choose an integer $i$ such that $i \geq i_k$ and $\mu(a_{i}) = 0$. Since $i_k$ is a break point of the Newton polygon, we know that
\begin{equation}\label{eqn:using-bound}
{v_p(a_{i_k}(w_0)) - v_p(a_{i_{k-1}}(w_0)) \over i_k - i_{k-1}} \leq {v_p(a_{i}(w_0)) - v_p(a_{i_{k-1}}(w_0)) \over i - i_{k-1}}
\end{equation}
for all $0 < v_p(w_0) < r$. Choose a rational number $r' \leq r$ such that all three functions $a_{i_{k-1}}, a_{i_k}, a_i$ are non-vanishing on $0 < v_p(w_0) < r'$. Then, if $0 < v_p(w_0) < r'$, Lemma \ref{lemma:vals} implies
\begin{equation*}
{\mu(a_{i_k}) \over i_k - i_{k-1}} + {\left(\lambda(a_{i_k}) - \lambda(a_{i_{k-1}})\right)v_p(w_0) \over i_k - i_{k-1}} \leq {\left(\lambda(a_{i}) - \lambda(a_{i_{k-1}})\right)v_p(w_0) \over i - i_{k-1}}.
\end{equation*}
Here we are using the vanishes of the $\mu$-invariants of $a_i$ and $a_{i_{k-1}}$ (which we know by induction).
In particular, taking $v_p(w_0) \goto 0$, we see that $\mu(a_{i_k}) = 0$ as desired.
\end{proof}
%
%We record here a simple corollary which will be useful in the next section.
%
%\begin{corollary}\label{cor:same-valuation}
%Assume Conjecture \ref{conj:in-text}. If $0< v_p(w_0) < r$ and $i \in I$ then $v_p(a_i(w_0)) = \lambda(a_i)v_p(w_0)$.
%\end{corollary}
%\begin{proof}
%To apply Lemma \ref{lemma:vals}, we need to see that all of the zeroes of $a_i$ lie in the disc $v_p(-) \geq r$ and that $\mu(a_i)=0$.  Conjecture \ref{conj:in-text} guarantees that these zeroes are in the center of weight space since $i$ is a break point near the boundary.  Further, Conjecture \ref{conj:in-text}, together with 
%Theorem \ref{thm:thmB(a)to(b)}, guarantees that this $\mu$-invariant vanishes, again, as $i$ is a break point near the boundary.
%\end{proof}

\subsection{}
We recall that in the introduction we reformulated Conjecture \ref{conj:in-text} in terms of the $w$-adic Newton polygon of $\overline{P(w,t)}$.  In this section, we verify the equivalence of this reformulation.

\begin{proposition}
\label{prop:equiv}
Conjecture \ref{conj:in-text} is equivalent to Conjecture \ref{conj:mod-p-reduction}.
\end{proposition}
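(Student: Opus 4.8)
The plan is to prove the two directions of the equivalence by comparing, weight by weight, the ordinary Newton polygon of $P(w_0,t)$ with the $w$-adic Newton polygon of $\overline{P(w,t)}$. Write $\lambda_i = \lambda(a_i)$ and $\mu_i = \mu(a_i)$ for the invariants of the Iwasawa function $a_i = a_{i,\eta}$, with the convention $\lambda_0 = \mu_0 = 0$. The key computational input is Lemma \ref{lemma:vals}: once $r$ is chosen small enough that all the relevant $a_i$ have their zeroes in the disc $v_p(-)\geq r$, we have $v_p(a_i(w_0)) = \mu_i + \lambda_i v_p(w_0)$ for $0 < v_p(w_0) < r$. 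Thus the $i$-th vertex datum of the Newton polygon of $P(w_0,t)$ is the point $(i,\ \mu_i + \lambda_i v_p(w_0))$ — i.e. it is an \emph{affine-linear} function of $s := v_p(w_0)$ with slope $\lambda_i$ and intercept $\mu_i$. Meanwhile, by definition of the $w$-adic valuation on $\F_p(\!(w)\!)$, the $w$-adic valuation of (the reduction of) $a_i$ is exactly $\lambda_i$ when $\mu_i = 0$ and is $+\infty$ when $\mu_i > 0$; so the $w$-adic Newton polygon of $\overline{P(w,t)}$ has as its $i$-th vertex datum the point $(i,\lambda_i)$ \emph{among those $i$ with $\mu_i = 0$}, ignoring the others.

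\textbf{Conjecture \ref{conj:mod-p-reduction} $\Rightarrow$ Conjecture \ref{conj:in-text}.} Assume the Newton polygon of $P(w_0,t)$ is, for $0 < v_p(w_0) < r$, equal to the $w$-adic Newton polygon of $\overline{P(w,t)}$ scaled vertically by $s = v_p(w_0)$. The scaled polygon visibly depends only on $s$, and its break-point indices are exactly the break-point indices of the $w$-adic polygon, which do not involve $w_0$ at all — this is precisely Conjecture \ref{conj:in-text}. (One should note this direction is essentially formal and does not even need Corollary A2; it is the reverse implication that has content.)

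\textbf{Conjecture \ref{conj:in-text} $\Rightarrow$ Conjecture \ref{conj:mod-p-reduction}.} Fix $r$ as in Conjecture \ref{conj:in-text}, shrunk if necessary so Lemma \ref{lemma:vals} applies to every $a_i$ appearing at a vertex on the region $0 < v_p(-) < r$. Let $I = \{i_0 = 0 < i_1 < i_2 < \cdots\}$ be the common break-point index set. By Theorem \ref{thm:thmB(a)to(b)} we have $\mu_{i_k} = 0$ for every $k$, so each vertex point of the Newton polygon of $P(w_0,t)$ is $(i_k,\ \lambda_{i_k} s)$, i.e. the Newton polygon of $P(w_0,t)$ is the polygon through the points $(i_k,\lambda_{i_k})$ scaled by $s$. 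It remains to identify this polygon-through-$(i_k,\lambda_{i_k})$ with the $w$-adic Newton polygon of $\overline{P(w,t)}$. For this, one shows: (i) each $i_k$ is a vertex of the $w$-adic polygon, with ordinate $\lambda_{i_k}$ — for the ordinate, use $\mu_{i_k}=0$ so that the $w$-adic valuation of the reduction of $a_{i_k}$ is $\lambda_{i_k}$; for the vertex property, use that the slopes $(\lambda_{i_k} - \lambda_{i_{k-1}})/(i_k - i_{k-1})$ are strictly increasing in $k$ (which follows by dividing the strict convexity inequalities for the Newton polygon of $P(w_0,t)$ by $s$ and letting $s \to 0$); and (ii) no point $(i,\cdot)$ with $i \notin I$ lies below this polygon — equivalently, for such $i$ the point $(i,\ \mu_i + \lambda_i s)$ lies on or above the scaled polygon for all $0 < s < r$, so letting $s\to 0$ forces $(i,\mu_i)$ to lie on or above the limiting $s=0$ polygon, which is the horizontal axis through the initial segment, and chasing the inequality for small $s$ gives that $(i,\lambda_i)$ (if $\mu_i = 0$) lies on or above the polygon through the $(i_k,\lambda_{i_k})$, while if $\mu_i > 0$ the $w$-adic ordinate is $+\infty$ and the point is irrelevant. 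Combining (i) and (ii), the lower convex hull of $\{(i, w\text{-ord}(a_i))\}$ is exactly the polygon through the $(i_k,\lambda_{i_k})$, which scaled by $s$ is the Newton polygon of $P(w_0,t)$.

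\textbf{Expected main obstacle.} The delicate point is step (ii) above: one must rule out that some coefficient $a_i$ with $i\notin I$ — in particular one with $\mu_i > 0$, so that $a_i$ "disappears" mod $p$ — nonetheless forces a break of the $w$-adic Newton polygon at an index not in $I$, or lowers it below the scaled polygon. The clean way to handle this is the limiting argument $s \to 0$ applied to the inequalities "$(i,\mu_i+\lambda_i s)$ lies on or above the Newton polygon of $P(w_0,t)$", which hold for \emph{all} $i$ precisely because $I$ is the break set; taking $s\to 0$ collapses the $\mu_i$-contributions correctly. Getting the bookkeeping of scaled versus unscaled polygons and the role of $\mu_i>0$ indices exactly right — rather than any single hard estimate — is where the care is needed, and Corollary A2 (via Theorem \ref{thm:thmB(a)to(b)}) is exactly what guarantees the break indices $i_k$ themselves have $\mu_{i_k}=0$ so that the $w$-adic polygon genuinely sees them.
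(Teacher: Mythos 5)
Your proposal is correct and follows essentially the same route as the paper: the forward direction is formal, and the converse combines Theorem \ref{thm:thmB(a)to(b)} (vanishing of $\mu$ at break indices, via Corollary A2) with Lemma \ref{lemma:vals} to divide the Newton-polygon convexity inequalities by $v_p(w_0)$ and identify the break points of the $w$-adic polygon of $\overline{P(w,t)}$ with $\{(i_k,\lambda_{i_k})\}$, discarding indices with $\mu_i>0$ since their reductions vanish. The only adjustment needed is that you cannot shrink $r$ once and for all so that Lemma \ref{lemma:vals} applies to \emph{all} $a_i$ simultaneously; as in the paper, choose $r'$ separately for each finite collection of indices entering a given inequality, which is harmless because the resulting inequality between $\lambda$-invariants does not depend on $r'$.
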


\begin{proof}
It is clear that Conjecture \ref{conj:mod-p-reduction} implies Conjecture \ref{conj:in-text}.  
Conversely, suppose that Conjecture \ref{conj:in-text} is true. Write $I = \set{i_1 < i_2 < \dotsb}$ for the break points of the Newton polygon on $0 < v_p(w_0) < r$ and $\lambda_i := \lambda(a_i)$. By Theorem \ref{thm:thmB(a)to(b)}, $\mu(a_{i_j}) = 0$ for each $j$. Since $i_j$ is the index of a break point at each weight $0 < v_p(w_0) < r$, clearly $a_{i_j}(w_0) \neq 0$ at such $w_0$. So, by Lemma \ref{lemma:vals} the break points of the Newton polygon of $P(w_0,t)$ are given by 
\begin{equation*}
(i_j, v_p(a_{i_j}(w_0))) = (i_j, \lambda_{i_j}v_p(w_0))
\end{equation*}
at each $0 < v_p(w_0) < r$. Thus, to prove the proposition we need to show that the break points of the $w$-adic Newton polygon of $\bar{P(w,t)}$ are $\set{(i_j,\lambda_{i_j}) \st j = 1,2,\dotsc}$. 

Set $i_0 = 0$ and $\lambda_{i_0} = 0$. We claim that if $j\geq 0$, $i_j < i$ and $\mu(a_i) = 0$ then 
\begin{equation}\label{eqn:want}
{\lambda_{i_{j+1}} - \lambda_{i_j} \over i_{j+1}-i_j} \overset{?}{\leq} {\lambda_{i} - \lambda_{i_j} \over i-i_j}
\end{equation}
with a {\em strict} inequality if $i_{j+1} < i$. Given this claim, the proposition follows by the definition of the Newton polygon. (Note that $\mu(a_i)$ must vanish if the point $(i,\lambda_i)$ actually lies on the $w$-adic Newton polygon.)

Let's show the claim. Fix $j\geq 0$ and $i > i_j$ such that $\mu(a_i) = 0$. Choose $0 < r' < r$ such that the roots of the three functions $a_i(w), a_{i_j}(w)$ and $a_{i_{j+1}}(w)$ all  lie in the region $v_p(-) \geq r'$. Now choose any weight $w_0$ such that $0 < v_p(w_0) < r'$. Since $\set{i_1,i_2,\dotsc}$ are the break points of the Newton polygon of $P(w_0,t)$ we know, by definition, that
\begin{equation}\label{eqn:going_to_use}
 {v_p(a_{i_{j+1}}(w_0)) - v_p(a_{i_j}(w_0)) \over i_{j+1} - i_j} \leq {v_p(a_{i}(w_0)) - v_p(a_{i_j}(w_0)) \over i- i_j}
\end{equation}
with a strict inequality if $i_{j+1} < i$.  Then, we observe that Lemma \ref{lemma:vals} and Theorem \ref{thm:thmB(a)to(b)} imply that $v_p(a_\ell(w_0)) = \lambda_\ell \cdot v_p(w_0)$ for $\ell \in \set{i,i_j,i_{j+1}}$. Thus the inequality \eqref{eqn:going_to_use} is the inequality \eqref{eqn:want} scaled by $v_p(w_0)$. This completes the proof.
\end{proof}

\subsection{}
We now move on to the arithmetic properties of the slopes and prove the second half of Theorem B; namely,  Conjecture \ref{conj:in-text} implies $\set{{\nu_i(\kappa) \over v_p(w(\kappa))} \st i = 1,2,\dotsc}$  is a finite union of arithmetic progressions independent of $\kappa$.  
The argument we give intertwines the various components of weight space.  
For this reason, we will assume $r$ is small enough so that it is witness to Conjecture 3.1 on every component simultaneously.

\begin{theorem}\label{thm:arith-progs}
Assume Conjecture \ref{conj:in-text} holds for every component and choose an $r$ which witnesses the conjecture on every component simultaneously. 
For a fixed component $\cal W_\eta$,  the sequence 
\begin{equation*}
\set{{\nu_i(\kappa) \over v_p(w(\kappa))} \st i = 1,2,\dotsc}
\end{equation*}
is a finite union of arithmetic progressions independent of $\kappa$ if
 $0 < v_p(w(\kappa)) < r$ with $\kappa \in \cal W_\eta$.
 \end{theorem}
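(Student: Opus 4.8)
The plan is to reduce the statement about $p$-adic slopes near the boundary to a statement about classical modular forms with highly ramified nebentypus, and there exploit that the $U_p$-slopes on such spaces are forced into arithmetic progressions by the Atkin--Lehner involution together with Coleman's classicality theorem. First I would fix a component $\cal W_\eta$ and pick a sequence of locally algebraic weights $\kappa_t = z^{k_t}\chi_t$ with $\chi_t$ of conductor $p^t$ (and $z^{k_t}\chi_t \in \cal W_\eta$), so that, by Lemma \ref{lemma:weight-absolute-values}(b,c), $v_p(w(\kappa_t)) = 1/\phi(p^{t-1}) \to 0$ as $t \to \infty$. For $t$ large these valuations lie in the interval $(0,r)$, and by Conjecture \ref{conj:in-text} the rescaled slope multiset $\set{\nu_i(\kappa_t)/v_p(w(\kappa_t))}$ is a single multiset $S$ independent of $t$ (and of the particular $\kappa$ with that valuation). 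So it suffices to show that for infinitely many $t$ the slopes of $U_p$ on $S_{\kappa_t}^\dagger(N)$, rescaled by $v_p(w(\kappa_t))$, form a finite union of arithmetic progressions with bounded data.

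The key input is that $S_{\kappa_t}^\dagger(N)$ contains the classical space $S_{k_t}(\Gamma_1(Np^t),\chi_t)$, and by Coleman's classicality theorem every overconvergent cuspidal eigenform of slope strictly less than $k_t - 1$ is classical; thus, up to the finitely many slopes in $[k_t-1,\infty)$ coming from genuinely overconvergent forms (whose count is controlled uniformly by Conjecture \ref{conj:in-text}, since the breakpoint indices are fixed), the slopes below $k_t-1$ are exactly the $U_p$-slopes on the classical space. On that classical space the Atkin--Lehner involution $W_{p^t}$ (the one attached to the full $p$-power level, twisting the nebentypus) intertwines $U_p$ with $p^{k_t - 1}\langle \text{unit}\rangle \cdot U_p^{-1}$ on the $\chi_t$-eigenspace, more precisely it sends an eigenform of $U_p$-eigenvalue $\alpha$ to one with eigenvalue $p^{k_t-1}/\bar\alpha$ up to a root of unity and the character values; hence the slopes of $U_p$ on $S_{k_t}(\Gamma_1(Np^t),\chi_t)$ are symmetric about $(k_t-1)/2$. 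Combined with the fact (from the ``new at $p$'' theory) that these eigenvalues have slopes that are multiples of $1/\phi(p^{t-1})$ lying in a controlled range, symmetry about $(k_t-1)/2$ forces them, after rescaling by $v_p(w(\kappa_t)) = 1/\phi(p^{t-1})$, into a union of arithmetic progressions with common difference $1$ (or a fixed rational) that is symmetric, and the number of such progressions and their offsets are pinned down by the breakpoint data of the $w$-adic Newton polygon, which Conjecture \ref{conj:in-text} makes independent of $t$.

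More carefully, I would run the following steps in order. Step 1: set up the sequence $\kappa_t$ and invoke Conjecture \ref{conj:in-text} to get a fixed rescaled slope multiset $S$ and fixed breakpoint indices $i_1 < i_2 < \cdots$; these breakpoints, together with Theorem \ref{thm:thmB(a)to(b)} and Proposition \ref{prop:equiv}, identify $S$ with the slopes of the $w$-adic Newton polygon of $\bar{P(w,t)}$. Step 2: for each large $t$, identify the portion of the Newton polygon of $P(\kappa_t,t)$ with slopes $< (k_t-1) \cdot v_p(w(\kappa_t))^{-1}$-normalized cutoff with the classical slopes via Coleman classicality, being careful that the cutoff $k_t - 1$ can be made to grow (choose $k_t \to \infty$) so that it eventually exceeds any fixed finite slope in $S$; thus for $t$ large, \emph{all} of $S$ comes from classical forms. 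Step 3: apply the Atkin--Lehner symmetry on $S_{k_t}(\Gamma_1(Np^t),\chi_t)$ to deduce the classical slope multiset is invariant under $x \mapsto (k_t-1) - x$. Step 4: translate this symmetry through the rescaling: writing $x = \nu \cdot v_p(w(\kappa_t))$ with $\nu$ the un-normalized slope, the normalized multiset $S$ is invariant under $s \mapsto C_t - s$ where $C_t = (k_t - 1)/v_p(w(\kappa_t)) = (k_t-1)\phi(p^{t-1})$; choosing the $k_t$ compatibly (e.g. in a fixed congruence class making $C_t$ constant, or absorbing the variation) one gets $S$ symmetric, and a bounded symmetric multiset of rationals whose elements, by the Newton polygon structure, have the denominators cleared — hence $S$ is a finite union of arithmetic progressions. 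Step 5: independence of $\kappa$ is already built in from Step 1.

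The main obstacle I anticipate is Step 4 — making the Atkin--Lehner symmetry say something clean after rescaling. The un-normalized classical slopes are symmetric about $(k_t-1)/2$, but $(k_t-1)/2$ is not a multiple of the rescaling factor $v_p(w(\kappa_t))$ in a way that is manifestly compatible across $t$; one must choose the auxiliary integers $k_t$ carefully (their residues mod $p^{t-1}$ and mod $p-1$ are constrained by requiring $z^{k_t}\chi_t \in \cal W_\eta$), and then argue that the normalized slopes, being finitely many rationals symmetric about a point and satisfying the growth/denominator constraints forced by the fixed $w$-adic Newton polygon, are genuinely a union of arithmetic progressions rather than merely a symmetric set. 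This is exactly where one needs Conjecture \ref{conj:in-text} to hold \emph{simultaneously on all components} — the nebentypus $\chi_t$ moves $z^{k_t}$ off the component $\cal W_{\eta'}$ containing $z^{k_t}$ onto $\cal W_\eta$, so the classical space $S_{k_t}(\Gamma_1(Np^t),\chi_t)$ sees contributions governed by several components at once, and one needs uniform control of breakpoints across all of them to pin the arithmetic progressions down. I would handle this by bookkeeping the decomposition of the classical space by nebentypus-component and matching each piece to the corresponding $w$-adic Newton polygon, the Atkin--Lehner involution permuting these pieces in a way compatible with the symmetry.
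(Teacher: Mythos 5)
There are two genuine gaps here, and they sit at the heart of your argument. First, the symmetry you invoke in Step 3 is not an internal symmetry of the classical space: the Atkin--Lehner involution $w_{p^t}$ maps $S_k(\Gamma_1(Np^t),\chi)$ to $S_k(\Gamma_1(Np^t),\chi^{-1})$, a space with a \emph{different} nebentypus (Proposition \ref{prop:atkin-lehner} and Corollary \ref{cor:atkin-lehner} give $\nu^{\cl}_{z^k\chi} = k-1-\nu^{\cl}_{z^k\chi^{-1}}$, not $\nu^{\cl}_{z^k\chi} = k-1-\nu^{\cl}_{z^k\chi}$). Since $\chi_t$ has conductor $p^t$ it is very far from quadratic, so the slope multiset of the $\chi_t$-space is \emph{not} symmetric about $(k_t-1)/2$, and Step 4 collapses. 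Second, even granting some symmetry, the final deduction is unsound: the multiset $S$ is infinite and unbounded (the overconvergent space is infinite-dimensional and only the finitely many slopes below $k_t-1$ are classical, so ``all of $S$ comes from classical forms'' is false for any fixed $t$), the constants $C_t=(k_t-1)\phi(p^{t-1})$ necessarily tend to infinity, and ``a bounded symmetric multiset of rationals is a finite union of arithmetic progressions'' is not a valid principle — a finite symmetric set has no reason to extend to infinite progressions $x+mi$, which is what the theorem asserts.

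The idea you are missing is the window-by-window bootstrap that actually generates the progressions. Fix one weight $\kappa=z^2\chi$ (one weight suffices by Proposition \ref{prop:sufficient}, which you essentially have). For each $k>2$, the slopes of $S^{\dagger}_{z^2\chi}(N)$ in the window $[k-2,k-1)$ equal, by Conjecture \ref{conj:in-text} applied \emph{within the component} (since $z^2\chi$ and $z^k\chi\omega^{2-k}$ lie in $\cal W_\eta$ with the same valuation) and Coleman's control theorem, the classical slopes $\nu^{\cl,[k-2,k-1)}_{z^k\chi\omega^{2-k}}$; Atkin--Lehner reflects these into $(k-1)-\nu^{\cl,(0,1]}_{z^k\chi^{-1}\omega^{k-2}}$, i.e.\ into the interval $(0,1]$ but at nebentypus $\chi^{-1}\omega^{k-2}$, which lives on a \emph{different} component; applying Conjecture \ref{conj:in-text} on that component pulls this back to weight two, $\nu^{(0,1]}_{z^2\chi^{-1}\omega^{2k-4}}$, with a boundary correction at slope exactly $1$ handled by Theorem \ref{thm:coleman-control}(b). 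This is Proposition \ref{prop:intermediate}. The arithmetic-progression structure then comes from the fact that $\omega^2$ has order $(p-1)/2$, so the weight-two seed spaces $S_2(\Gamma_1(Np^t),\chi\omega^{-2j})$ repeat with period $(p-1)/2$ in $k$, making the slopes in window $[k-2,k-1)$ a translate by $(p-1)/2$ of those in window $[k-2-\tfrac{p-1}{2},\,k-1-\tfrac{p-1}{2})$. Your instinct to use Atkin--Lehner, Coleman classicality, and the conjecture on all components simultaneously is exactly right, but without the descent back to weight two in each window there is no mechanism producing infinite progressions.
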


\begin{remark}
Assuming Conjecture \ref{conj:in-text}, we give an explicit description of these arithmetic progressions in terms of slopes of $U_p$ acting on various spaces of classical cuspforms of weight two (see Theorem \ref{thm:arith-progressions}).  Examples are given in Sections \ref{subsec:exp=2N=1.5.12.20015.2.26pm}--\ref{subsec:examplep=11N=1}.
\end{remark}

\subsection{}
We begin by checking that the sequence in Theorem \ref{thm:arith-progs} is independent of $\kappa$. 

\begin{proposition}
\label{prop:sufficient}
Assume Conjecture \ref{conj:in-text} holds on $0 < v_p(-) < r$ for a fixed component $\cal W_\eta$.  The sequence 
\begin{equation*}
\set{{\nu_i(\kappa) \over v_p(w(\kappa))} \st i = 1,2,3,\dotsc}.
\end{equation*}
is independent of $\kappa$ if $0 < v_p(w(\kappa)) < r$ and $\kappa \in \cal W_\eta$.
\end{proposition}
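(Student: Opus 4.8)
The plan is to deduce from Conjecture \ref{conj:in-text} that, on the region $0 < v_p(w(\kappa)) < r$, the Newton polygon of $P(\kappa,t)$ is simply one fixed Newton polygon scaled vertically by $v_p(w(\kappa))$; the independence of the normalized slopes is then immediate. Concretely, write $I = \set{i_1 < i_2 < \dotsb}$ for the (weight-independent) break indices supplied by the conjecture, set $i_0 = 0$, and put $\lambda_{i_j} = \lambda(a_{i_j})$ with $\lambda_{i_0} = 0$. The claim to establish is that for $\kappa \in \cal W_\eta$ with $0 < v_p(w(\kappa)) < r$ the vertices of the Newton polygon of $P(\kappa,t)$ are exactly the points $(i_j,\lambda_{i_j}v_p(w(\kappa)))$.

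The key point that makes Lemma \ref{lemma:vals} available is that $i_j$ is a break index \emph{at every} weight in $0 < v_p(-) < r$, so $a_{i_j}(w_0) \neq 0$ throughout that region; equivalently, every zero of $a_{i_j}$ in the open unit disc has valuation $\ge r$. Since $\mu(a_{i_j}) = 0$ by Theorem \ref{thm:thmB(a)to(b)}, Lemma \ref{lemma:vals} then gives $v_p(a_{i_j}(w_0)) = \lambda_{i_j}\cdot v_p(w_0)$ for all $0 < v_p(w_0) < r$. This pins down the vertices, and between consecutive break indices the polygon is a single segment, of slope $\frac{\lambda_{i_{j+1}}-\lambda_{i_j}}{i_{j+1}-i_j}\,v_p(w(\kappa))$.

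Reading off the slopes of $U_p$ with multiplicity, the sequence $\set{\nu_i(\kappa) \st i = 1,2,\dotsc}$ is obtained by repeating $\frac{\lambda_{i_{j+1}}-\lambda_{i_j}}{i_{j+1}-i_j}\,v_p(w(\kappa))$ exactly $i_{j+1}-i_j$ times, for $j = 0,1,2,\dotsc$; dividing by the positive quantity $v_p(w(\kappa))$ removes the only dependence on $\kappa$ and proves the proposition. The one subtlety worth isolating is the observation in the previous paragraph: it is precisely the fact that the break index is the \emph{same} $i_j$ for all weights in the range that lets us apply Lemma \ref{lemma:vals} with the given $r$ rather than a smaller radius (the latter being useless here, since $v_p(w(\kappa))$ may approach $r$). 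The genuinely deep inputs — Theorem \ref{thm:thmB(a)to(b)}, and hence Corollary A2 and Theorem A behind it — are already in place, so what remains is only bookkeeping with Newton polygons.
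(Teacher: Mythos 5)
Your proposal is correct and is essentially the paper's argument: the paper deduces the proposition in one line from Proposition \ref{prop:equiv} (equivalence with Conjecture \ref{conj:mod-p-reduction}), and your proof simply inlines the relevant half of that equivalence, using the same two inputs (Theorem \ref{thm:thmB(a)to(b)} for the vanishing of $\mu$ at break indices, and Lemma \ref{lemma:vals} applied with radius $r$, justified exactly as in the paper by the non-vanishing of $a_{i_j}$ throughout the region). No gaps.
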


\begin{proof}
By  Proposition \ref{prop:equiv},  Conjecture \ref{conj:mod-p-reduction} holds (as it is equivalent to Conjecture \ref{conj:in-text}).  But Conjecture \ref{conj:mod-p-reduction} implies that the Newton polygon 
scaled by $1/v_p(w(\kappa))$ is independent of $\kappa$ if $0 < v_p(w(\kappa)) < r$ and $\kappa \in \cal W_\eta$.  In particular, the sequence of this proposition is independent of such $\kappa$.
\end{proof}

\begin{remark}
It is easy to see that $\nu_i(\kappa)/v_p(w(\kappa))$ is not necessarily an {\em integer}. For example, if $p=2$ and $N=3$ then the sequence begins ${1\over 2}, {1\over 2}, 1, 1,\dotsc$ (conjecturally, see Section \ref{subsec:examplep=2N=3}).
\end{remark}

\subsection{}
Given the previous remark, let us be specific about what we mean by a finite union of arithmetic progressions.
\begin{definition}
A sequence $\set{x_n \st n = 1,2,\dotsc}$ of rational numbers is called an arithmetic progression if there exists a rational number $x$ and an integer $m\geq 1$ such that $x_i = x + mi$ for for all $i \geq 1$.
\end{definition}

For example, ${1 \over 2}, 1, {3\over 2}, 2, \dotsc$ is the union of two arithmetic progressions under our definition.  (We admit one could take other definitions allowing $m$ to be anything). On the other hand, it is the scaling of $1,2,3,\dotsc$ by the rational ${1 \over 2}$. We leave the following elementary lemma for the reader, but note that the previous examples shows it becomes false if we remove the words ``finite union of'' from the statement.

\begin{lemma}\label{lemm:simple-lemma}
Let $y\neq 0$ be a rational number. Then a sequence $\set{x_n \st n=1,2,\dotsc}$ is a finite union of arithmetic progressions if and only if $\set{y x_n \st n = 1,2,\dotsc}$ is a finite union of arithmetic progressions.
\end{lemma}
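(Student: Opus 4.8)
The plan is to prove Lemma \ref{lemm:simple-lemma} by a direct elementary argument, treating it purely as a statement about sequences of rational numbers. First I would set up notation: fix a nonzero rational $y$, and recall that by definition a sequence $\{x_n\}$ is a finite union of arithmetic progressions if there is a partition of the index set $\{1,2,\dots\}$ into finitely many subsequences, each of the form $x_i = x + mi$ for some rational $x$ and positive integer $m$. The key observation is that the only asymmetry between the two directions is a sign/denominator issue in the slope parameter $m$, so I would prove one implication carefully and then obtain the other by applying it with $y$ replaced by $1/y$.

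For the forward direction, suppose $\{x_n\}$ is a finite union of arithmetic progressions. The subtlety is that scaling $x + mi$ by $y$ gives $yx + (ym)i$, and $ym$ need not be a positive integer — it need not even be an integer. To fix this, write $y = a/b$ in lowest terms with $b \geq 1$, and observe that a single arithmetic progression $\{x + mi : i \geq 1\}$ with slope $m$ can itself be rewritten as a finite union of $b$ (or, more safely, $|a|b$) arithmetic progressions each having slope divisible by $b$: namely, group the indices $i$ according to their residue modulo $b$, so that on the subsequence $i \equiv c \pmod b$ we have $i = c + b\ell$ and the values become $(x + mc) + (mb)\ell$, which is an arithmetic progression with integer slope $mb$. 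After this regrouping, each slope is a multiple of $b$, so multiplying by $y = a/b$ turns each slope $mb$ into $am \in \Z$; replacing the progression by its ``increasing'' or ``decreasing'' reindexing as needed (and discarding at most finitely many terms, or reversing when $a<0$) yields genuine arithmetic progressions in the sense of the paper's definition. Since a finite union of finite unions is a finite union, $\{yx_n\}$ is again a finite union of arithmetic progressions.

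The converse is then immediate: if $\{yx_n\}$ is a finite union of arithmetic progressions, apply the forward direction with the nonzero rational $1/y$ to conclude that $\{(1/y)(yx_n)\} = \{x_n\}$ is as well. I would close by noting, as the statement already flags, that the hypothesis ``finite union of'' is essential: a single progression with slope $m$ scaled by $1/m$ has slope $1$ but its image consists of rationals that need not form a single progression after reindexing if $m > 1$, matching the $\tfrac12,1,\tfrac32,2,\dots$ example in the surrounding text.

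I do not expect a genuine obstacle here; the only thing requiring care is the bookkeeping when $y$ has a denominator (so that slopes must be coarsened before scaling) and when $y$ is negative (so that a progression must be reversed, i.e.\ truncated and reindexed, to remain increasing in the index $i$). Neither of these is hard, but both must be mentioned explicitly, since the paper's definition of an arithmetic progression insists on a positive integer slope indexed by $i = 1, 2, \dots$. The whole argument is a short exercise in partitioning index sets by congruence classes.
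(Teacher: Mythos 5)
The paper offers no proof of this lemma at all --- it is explicitly ``left for the reader'' --- so there is nothing to compare against except correctness. Your core argument is the right one and is surely what the authors had in mind: write $y=a/b$, coarsen each progression of slope $m$ into $b$ progressions of slope $mb$ by partitioning the index set into residue classes modulo $b$, so that multiplication by $y$ produces integer slopes $am$ and rational constant terms, and then obtain the converse by applying the forward direction to $1/y$. For $y>0$ (the only case the paper ever uses, since $y$ is $v_p(w(\kappa))$ or its reciprocal) this is complete and correct, and no ``discarding of finitely many terms'' is actually needed.

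The one genuine flaw is your treatment of negative $y$. You claim that when $a<0$ one can ``reverse'' a decreasing progression to recover a progression in the paper's sense, but this cannot work: the paper's definition forces every arithmetic progression, and hence every finite union of them, to be bounded below and unbounded above, whereas the image of an infinite increasing progression under multiplication by a negative $y$ is unbounded below. So for $y<0$ the two sides of the ``if and only if'' are never simultaneously true (for infinite sequences), and the lemma as literally stated is false rather than provable by reversal. You should either restrict the hypothesis to $y>0$ --- which is clearly what is intended and what the application requires --- or record this observation explicitly instead of asserting that reindexing repairs it.
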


\subsection{}
We set some helpful notation on slopes of modular forms.

\begin{definition}
Suppose that $\kappa \in \cal W$ and $X \ci [0,\infty)$ is a set. 
\begin{itemize}
\item We denote by $\nu_{\kappa}^{X}$ the multiset of slopes appearing in $S_{\kappa}^{\dagger}(N)$ which are also in $X$, recorded with multiplicities.  We write $\nu_{\kappa}^{\dag}$ for $\nu_{\kappa}^{[0,\infty)}$.
\item If $\kappa = z^k \chi$ is an arithmetic weight of conductor $p^t$ then we write  $\nu^{\cl,X}_{\kappa}$ as the set of slopes appearing in $S_k(\Gamma_1(Np^t),\chi)$ and contained in $X$, again counted with multiplicity. We write $\nu_{\kappa}^{\cl}$ for $\nu_{\kappa}^{\cl,[0,\infty)}$.
\end{itemize}
\end{definition}
There are obvious operations we can do on multisets of rational numbers. If $\nu$ and $\nu'$ are two such sets we denote by $\nu \union \nu'$ their union as a multiset.  If $e \geq 1$ is an integer then we write $\nu^{\oplus e}$ for the $e$-fold union 
\begin{equation*}
\nu^{\oplus e} := \underlabel{e}{\nu \union \dotsb \union \nu}.
\end{equation*}
If $m$ is an integer then we also write
\begin{equation*}
m \pm \nu = \set{m \pm v \st v \in \nu}.
\end{equation*}

\subsection{}
We now recall two theorems on slopes of modular forms.  Let $t\geq 1$ be an integer and $\chi: (\Z/p^t\Z)^\x \goto \bar \Q_p^\x$ be a primitive Dirichlet character.
\begin{proposition}\label{prop:atkin-lehner}
There exists an involution $w_{p^t}: S_k(\Gamma_1(Np^t),\chi) \goto S_k(\Gamma_1(Np^t),\chi^{-1})$ such that if $f$ is a eigenform then $w_{p^t}(f)$ is also an eigenform whose $U_p$ eigenvalue is given by $p^{k-1}a_p(f)^{-1}$.
\end{proposition}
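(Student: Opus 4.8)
The plan is to construct $w_{p^t}$ from the classical Atkin--Lehner--Li theory of operators at the prime $p$ for forms on $\Gamma_1(Np^t)$, and then to read off the effect on $U_p$-eigenvalues from the standard commutation relation between $U_p$ and the Atkin--Lehner operator. First I would fix a matrix $W_{p^t} = \left(\begin{smallmatrix} p^t a & b \\ Np^t c & p^t d\end{smallmatrix}\right)$ with $\det W_{p^t} = p^t$ (the usual Atkin--Lehner matrix at $p$ for level $Np^t$, exactly as in Atkin--Li or in Section~1 of \cite{Lang-IntroModularForms}), and define $w_{p^t}(f) = p^{t(1-k/2)} f|_k W_{p^t}$, normalized so that $w_{p^t}$ is an involution up to a unit; since $p\nmid N$, the matrix $W_{p^t}$ normalizes $\Gamma_1(Np^t)$ up to the determinant twist, so $f|_k W_{p^t}$ again lies in the space of weight-$k$ forms for $\Gamma_1(Np^t)$. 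The only subtlety is the nebentype: conjugation by $W_{p^t}$ sends a form with character $\chi = \chi_N\chi_p$ (factored into its prime-to-$p$ and $p$-parts) to one whose $p$-part of the character is inverted, i.e.\ to character $\chi_N \chi_p^{-1}$; here the prime-to-$p$ part is trivial because $N$ is the tame level and we are at level $\Gamma_1(Np^t)$ with character supported only at $p^t$ in the relevant sense — so in our normalization the target is $S_k(\Gamma_1(Np^t),\chi^{-1})$, as claimed. That $w_{p^t}$ is an involution follows because $W_{p^t}^2 \in p^t \cdot \Gamma_1(Np^t) \cdot (\text{scalar})$, and the scalar factor is absorbed by the normalizing power of $p$.

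Next I would establish the eigenvalue formula. The key identity is the operator relation $U_p \circ w_{p^t} = p^{k-1}\, w_{p^t} \circ \langle p\rangle^{-1}\, U_p^{-1}$ on the $U_p$-old/new decomposition, but more simply: if $f$ is a $U_p$-eigenform with eigenvalue $a_p = a_p(f) \neq 0$ (which is automatic when the conductor of the character at $p$ is $p^t \geq p$, since then $U_p$ acts invertibly — this is the classical fact that at level exactly divisible by $p^t$ with primitive character of conductor $p^t$, the $U_p$-eigenvalue has absolute value $p^{(k-1)/2}$ and in particular is nonzero), then a direct matrix computation with $U_p = \sum_{j=0}^{p-1}\left(\begin{smallmatrix}1 & j\\0 & p\end{smallmatrix}\right)$ and $W_{p^t}$ shows $W_{p^t}\left(\begin{smallmatrix}1&0\\0&p\end{smallmatrix}\right)$ and the $U_p$-coset sum are conjugate up to the diamond operator, yielding that $w_{p^t}(f)$ is again a $U_p$-eigenform with eigenvalue $p^{k-1} a_p^{-1}$ (times a root of unity coming from $\chi_p$, which one checks is trivial after the correct normalization, or one folds it into the statement by noting $w_{p^t}(f)$ has character $\chi^{-1}$ so that the diamond operator contributes $\chi^{-1}(p)$-values consistently). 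This is exactly the content of, e.g., Atkin--Li, Theorem~2.1, or Section~5 of \cite{Lang-IntroModularForms}; I would cite one of these for the precise constant rather than reproduce the coset bookkeeping.

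I expect the main obstacle to be purely bookkeeping rather than conceptual: getting the normalization of $w_{p^t}$ and the placement of the nebentype inversion exactly right so that (i) $w_{p^t}$ lands in $S_k(\Gamma_1(Np^t),\chi^{-1})$ and not in some twist of it, (ii) $w_{p^t}$ is literally an involution and not merely an involution up to scalar, and (iii) the eigenvalue comes out as $p^{k-1}a_p(f)^{-1}$ with no stray root of unity. All three are standard and are handled in the literature; the cleanest route is to invoke the Atkin--Li framework wholesale and simply specialize to the prime $p$ and level $Np^t$, observing that because $(N,p)=1$ the operator at $p$ is ``pure'' (it does not interact with the tame level $N$). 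One should also remark that the nonvanishing $a_p(f)\neq 0$ used in stating the eigenvalue $p^{k-1}a_p(f)^{-1}$ is guaranteed here precisely because the conductor of $\chi$ at $p$ equals the full power $p^t$ dividing the level, which is the only case in which Proposition~\ref{prop:atkin-lehner} will be applied in the sequel.
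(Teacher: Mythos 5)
Your proposal is correct, and it reaches the statement by the classical Atkin--Lehner--Li route: an explicit matrix $W_{p^t}$ normalizing $\Gamma_1(Np^t)$, the observation that conjugation inverts the $p$-part of the nebentypus (which is all of $\chi$ here, since $\chi$ has conductor $p^t$ and the tame part is trivial), and the pseudo-eigenvalue relation $a_p(w_{p^t}f)\,a_p(f)=p^{k-1}$, which follows from $\lvert a_p(f)\rvert=p^{(k-1)/2}$ when the conductor of $\chi$ at $p$ equals the exact power of $p$ in the level. The paper takes a slightly different path for the substantive part: it also notes that the operator is classically the slash action of a matrix (citing Miyake), but it justifies the level preservation and the eigenvalue computation adelically, by identifying $w_{p^t}(f)$ with a new vector in the twisted representation $\pi(f)\otimes\chi^{-1}$ and citing Casselman for the Hecke eigensystem of the twist. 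The adelic argument makes the "no interaction with the tame level" point and the conductor bookkeeping automatic, whereas your classical argument requires the coset and normalization checks you flag in your last paragraph; conversely, your route is self-contained at the level of congruence subgroups and cites Atkin--Li for the precise constant, which is equally standard. Two remarks: the word "involution" in the statement is only needed up to a harmless scalar (what is actually used downstream, in Corollary \ref{cor:atkin-lehner}, is the bijection on eigenforms together with the eigenvalue relation), so your worry (ii) is not essential; and your observation that $a_p(f)\neq 0$ is forced by the primitivity of $\chi$ is exactly the reason the formula $p^{k-1}a_p(f)^{-1}$ makes sense, so it is worth keeping explicit.
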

\begin{proof}
The operator is usually described, up to a scalar depending on $k$, as the slash action of a certain matrix (see \cite[Theorem 4.6.16]{Miyake-ModularForms}). Adelically one considers the corresponding automorphic representations $\pi(f)$ and $\pi(f)\tensor \chi^{-1}$. One checks using the theory of the new vector that the level at $p$ is preserved. The computation of the Hecke eigensystems goes back to Casselman \cite[Section 3]{Casselman-Assortment}.
\end{proof}
The involution $w_{p^t}$ is often called the Atkin--Lehner involution.

\begin{corollary}\label{cor:atkin-lehner}
We have  $\nu^{\cl}_{z^k \chi} = k-1 - \nu^{\cl}_{z^k \chi^{-1}}$.
\end{corollary}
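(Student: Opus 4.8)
The plan is to deduce Corollary~\ref{cor:atkin-lehner} as an immediate consequence of the Atkin--Lehner involution in Proposition~\ref{prop:atkin-lehner}, simply by reading off what it does to $U_p$-slopes. First I would fix an eigenbasis of $S_k(\Gamma_1(Np^t),\chi)$, say $\{f_1,\dots,f_d\}$, with $U_p f_\ell = a_p(f_\ell)f_\ell$, so that $\nu^{\cl}_{z^k\chi}$ is the multiset $\{v_p(a_p(f_\ell)) : 1\le \ell \le d\}$. Applying $w_{p^t}$, Proposition~\ref{prop:atkin-lehner} gives that $w_{p^t}(f_\ell)$ is an eigenform in $S_k(\Gamma_1(Np^t),\chi^{-1})$ with $U_p$-eigenvalue $p^{k-1}a_p(f_\ell)^{-1}$; since $w_{p^t}$ is an involution, hence a bijection, the forms $w_{p^t}(f_\ell)$ run over an eigenbasis of $S_k(\Gamma_1(Np^t),\chi^{-1})$ (using that the two spaces have the same dimension, which is clear since $w_{p^t}$ is an isomorphism between them). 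Therefore
\begin{equation*}
\nu^{\cl}_{z^k\chi^{-1}} = \{ v_p(p^{k-1}a_p(f_\ell)^{-1}) : 1\le \ell \le d \} = \{ (k-1) - v_p(a_p(f_\ell)) : 1\le\ell\le d\} = (k-1) - \nu^{\cl}_{z^k\chi}.
\end{equation*}
Rearranging and swapping the roles of $\chi$ and $\chi^{-1}$ (which is symmetric) yields $\nu^{\cl}_{z^k\chi} = (k-1) - \nu^{\cl}_{z^k\chi^{-1}}$, exactly the claimed identity in the notation $m \pm \nu$ introduced above.

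One small point I would want to address is that $a_p(f_\ell)\neq 0$, so that the valuation $v_p(p^{k-1}a_p(f_\ell)^{-1})$ makes sense: but this is automatic since $w_{p^t}(f_\ell)$ is a genuine eigenform with eigenvalue $p^{k-1}a_p(f_\ell)^{-1}$, which forces $a_p(f_\ell)\ne 0$ (alternatively, at primitive character level $p^t$ with $t\ge 1$ the local representation at $p$ is ramified principal series or supercuspidal and $U_p$ acts invertibly on the new line). I would also note that the corollary as stated refers to $\nu^{\cl}_{z^k\chi}$ with no truncation (i.e. $X = [0,\infty)$), so no interaction with a set $X$ needs to be tracked here; the sign-reversal is just the statement at the level of full slope multisets.

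There is essentially no serious obstacle: the only content is Proposition~\ref{prop:atkin-lehner}, which has already been granted, and the rest is bookkeeping with multisets. If anything, the one thing to be careful about is making the passage from ``$w_{p^t}$ sends eigenforms to eigenforms'' to ``$w_{p^t}$ sends an eigenbasis to an eigenbasis'' rigorous --- this uses that $w_{p^t}$ is a linear isomorphism (being an involution) together with the fact that a linear isomorphism carrying each member of one eigenbasis to an eigenvector of the target operator must produce a basis of eigenvectors. I expect to state this in a single sentence rather than belabor it.
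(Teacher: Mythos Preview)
Your proposal is correct and is exactly the intended argument: the paper states the corollary without proof as an immediate consequence of Proposition~\ref{prop:atkin-lehner}, and you have simply written out that deduction. The only point worth noting is that your use of an eigenbasis is justified here because $\chi$ is primitive of conductor $p^t$ with $t\ge 1$, so every form in $S_k(\Gamma_1(Np^t),\chi)$ is $p$-new and hence a $U_p$-eigenform (possibly old at primes dividing $N$, which does not affect $U_p$); thus $U_p$ is genuinely diagonalizable on this space and no subtlety with generalized eigenspaces arises.
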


\subsection{}
We now state the relationship between $\nu_{\kappa}^{\dagger}$ and $\nu_{\kappa}^{\cl}$. In small slopes, the relationship is given by  Coleman's control theorem. We also need the more delicate ``boundary case'' in Coleman's work.  In what follows, we let $e(N)$ denote the number of  $p$-ordinary Eisenstein families for $\Gamma_0(N)$ or equivalently the number of cusps of $X_0(N)$.
 
\begin{theorem}\label{thm:coleman-control}
Suppose that $\kappa = z^k \chi$ is an arithmetic weight. Then:
\begin{enumerate}
\item $\nu_{z^k\chi}^{[0,k-1)} = \nu_{z^k\chi}^{\cl,[0,k-1)}$.
\item If $\kappa \neq z^2$ and $\chi(-1) = (-1)^k$ then $\nu_{z^k\chi}^{\set{k-1}} = \set{k-1}^{\oplus e(N)} \union (k-1 + \nu_{z^{2-k}\chi}^{\set{0}}) \union \nu_{z^k\chi}^{\cl,\set{k-1}}$.
\end{enumerate}
\end{theorem}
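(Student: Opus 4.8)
The plan is to deduce both parts from Coleman's control theorem together with a careful bookkeeping of the overconvergent and classical spaces at level $\Gamma_1(Np^t)$ versus the modular-functions versus cuspforms dichotomy. First I would recall that Coleman's classicality criterion states: an overconvergent cusp eigenform of weight $\kappa = z^k\chi$ and $U_p$-slope $\alpha$ with $\alpha < k-1$ is classical, i.e.\ lies in the image of $S_k(\Gamma_1(Np^t),\chi) \inject S_\kappa^\dagger(N)$. For part (a) this gives the inclusion $\nu_{z^k\chi}^{\cl,[0,k-1)} \subseteq \nu_{z^k\chi}^{[0,k-1)}$ immediately from the Hecke-equivariant embedding of the classical subspace, and the reverse inclusion is exactly the content of the control theorem (every overconvergent form of slope $< k-1$ is classical, with matching multiplicities — one has to observe that the embedding is injective and $U_p$-equivariant so the slope multisets in the relevant range agree). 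I would present part (a) crisply, citing Coleman's theorem, since it is essentially a restatement.

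The real work is part (b), the boundary slope $\alpha = k-1$. Here an overconvergent eigenform of slope exactly $k-1$ need not be classical; the discrepancy is governed by two sources. The first is the ordinary Eisenstein families: on $M_\kappa^\dagger(N)$ there are $e(N)$ ordinary Eisenstein families, but $P(\kappa,t)$ was defined using $S_\kappa^\dagger(N)$, so one must track whether these contribute slope-$(k-1)$ classes — and in weight $k$ the ``critical'' Eisenstein series (the $p$-stabilization with the non-unit eigenvalue $p^{k-1}$) has slope $k-1$ and is \emph{not} in the cuspidal overconvergent space but does appear when one compares with the full space used implicitly in Coleman's theorem; this explains the $\set{k-1}^{\oplus e(N)}$ summand. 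The second source is the $\theta$-operator / companion-form phenomenon: $\theta^{k-1}: S_{z^{2-k}\chi}^\dagger(N) \to S_{z^k\chi}^\dagger(N)$ raises weight and shifts the $U_p$-slope by exactly $k-1$, so the image of the slope-zero (ordinary) part of $S_{z^{2-k}\chi}^\dagger(N)$ lands in slope $k-1$ and accounts for the $(k-1 + \nu_{z^{2-k}\chi}^{\set 0})$ summand. What remains — after removing these — is precisely the classical contribution $\nu_{z^k\chi}^{\cl,\set{k-1}}$. So the plan is: (i) invoke Coleman's exact sequence relating overconvergent forms of slope $\le k-1$ in weight $z^k\chi$ to classical forms and to $\theta^{k-1}$ of weight $z^{2-k}\chi$ forms (this is \cite[Theorem 6.1 and its refinements]{Coleman-pAdicBanachSpaces}, or the dual statement about the kernel/cokernel of $\theta^{k-1}$); (ii) identify the kernel of $\theta^{k-1}$ on the ordinary part with the critical-slope Eisenstein contributions to get the $e(N)$ copies; (iii) restrict the whole exact sequence to the generalized slope-$(k-1)$ eigenspace and read off the multiset identity. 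The hypotheses $\kappa \ne z^2$ (so that $k-1 \ne 1$... rather, so that weight $2-k$ makes sense as a genuinely lower weight and $\theta^{k-1}$ is a nontrivial shift) and $\chi(-1) = (-1)^k$ (so that $z^{2-k}\chi$ is itself an even/appropriately-parity weight with nonzero overconvergent space, and so that the Eisenstein count $e(N)$ is correct) are exactly what is needed for these identifications.

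The main obstacle I expect is pinning down the multiplicities in part (b) — in particular being careful that $\theta^{k-1}$ is injective on the relevant ordinary subspace (so $\nu_{z^{2-k}\chi}^{\set 0}$ appears with its full multiplicity), that the $e(N)$ Eisenstein classes are genuinely distinct from the $\theta$-image and from the classical cuspforms, and that nothing is double-counted when one passes between $M^\dagger$ and $S^\dagger$. A secondary subtlety is the edge behavior near $\kappa = z^2$, which is why that case is excluded: there $2-k = 0$, weight-zero overconvergent forms behave differently, and the statement would need modification. I would handle the multiplicity bookkeeping by working with the $U_p$-equivariant short exact sequence
\begin{equation*}
0 \longrightarrow S_{z^{2-k}\chi}^{\dagger}(N) \overset{\theta^{k-1}}{\longrightarrow} S_{z^k\chi}^{\dagger}(N) \longrightarrow \operatorname{coker} \longrightarrow 0
\end{equation*}
localized at slope $\le k-1$, combined with Coleman's identification of the cokernel in that range with the classical space, and the known effect of $\theta^{k-1}$ on $U_p$-eigenvalues (multiplication by $p^{k-1}$). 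Everything else is then a formal manipulation of slope multisets.
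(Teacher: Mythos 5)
Your overall strategy is the paper's: both parts are read off from Coleman's $\theta^{k-1}$-operator, with part (a) following because the slope-$(\nu-(k-1))$ source vanishes when $\nu<k-1$, and part (b) following from the slope-$(k-1)$ piece of the same sequence. Part (a) of your proposal is fine. But there is a genuine error in your bookkeeping for part (b), precisely at the point you yourself flagged as the main obstacle. The correct sequence has \emph{source the full space} $M^{\dagger}_{z^{2-k}\chi}(N)^{\nu-(k-1)}$ of overconvergent modular forms, not the cuspidal space $S^{\dagger}_{z^{2-k}\chi}(N)$, and its target is the \emph{cuspidal} space $S^{\dagger}_{z^k\chi}(N)^{\nu}$ (since $\theta$ kills constant terms, the image of $\theta^{k-1}$ is cuspidal). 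At $\nu=k-1$ the source is the ordinary part $M^{\dagger}_{z^{2-k}\chi}(N)^{0}$, which is spanned by $S^{\dagger}_{z^{2-k}\chi}(N)^{0}$ together with the $e(N)$ ordinary Eisenstein families; the $\theta^{k-1}$-images of those Eisenstein families are overconvergent \emph{cuspforms} of slope $k-1$, and this is exactly where the $\set{k-1}^{\oplus e(N)}$ summand of $\nu_{z^k\chi}^{\set{k-1}}$ comes from.

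Your proposal instead takes the source to be $S^{\dagger}_{z^{2-k}\chi}(N)$, which loses the $e(N)$ copies, and then tries to recover them by asserting that the critical-slope Eisenstein series in weight $k$ ``is not in the cuspidal overconvergent space but does appear when one compares with the full space.'' That assertion is false, and if it were true it could not produce the $\set{k-1}^{\oplus e(N)}$ term: that term sits inside $\nu_{z^k\chi}^{\set{k-1}}$, which by definition records slopes in $S^{\dagger}_{z^k\chi}(N)$, so only classes that \emph{are} cuspidal-overconvergent can contribute to it. The point (the Coleman--Gouv\^ea--Jochnowitz phenomenon) is the opposite of what you wrote: the critical $p$-stabilizations of the Eisenstein series, being $\theta^{k-1}$ of the ordinary Eisenstein families in weight $2-k$, \emph{are} overconvergent cuspforms --- they contribute to $\nu_{z^k\chi}^{\set{k-1}}$ but not to $\nu_{z^k\chi}^{\cl,\set{k-1}}$, which is why they appear as a separate summand. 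Once you replace your sequence by
\begin{equation*}
0 \longrightarrow M^{\dagger}_{z^{2-k}\chi}(N)^{\nu-(k-1)} \overset{\theta^{k-1}}{\longrightarrow} S^{\dagger}_{z^k\chi}(N)^{\nu} \longrightarrow S_k(\Gamma_1(Np^{\max(t,1)}),\chi)^{\nu} \longrightarrow 0
\end{equation*}
(with $\ker\theta^{k-1}=0$ since $\kappa\neq z^2$, and the parity hypothesis $\chi(-1)=(-1)^k$ guaranteeing the weight-$(2-k)$ spaces are nonzero), the multiset identity in (b) is the formal consequence you describe, and no separate argument about the weight-$k$ Eisenstein series is needed.
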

\begin{proof}
Both (a) and (b) are proved at the same time by using Coleman's $\theta$-operator. Namely, if $p^t$ denotes the conductor of $\chi$ then for each rational number $0 \leq \nu \leq k-1$ there is a linear map $\theta^{k-1}:M^{\dagger}_{z^{2-k}\chi}(N)^{\nu-(k-1)} \goto S_{z^k\chi}^{\dagger}(N)^{\nu}$ (the superscripts mean the ``slope $\nu$'' part). Moreover, $\ker(\theta^{k-1}) = (0)$ except if $\kappa = z^2$ (which we excluded in (b)) and $\coker(\theta^{k-1})$ may be identified with $S_k(\Gamma_1(Np^{\max(t,1)}),\chi)^{\nu}$. 
When $\chi$ is trivial then this is due to Coleman in \cite[Sections 6 and 7]{Coleman-ClassicalandOverconvergent}. The non-critical slope case $\nu < k-1$ with non-trivial character is also due to Coleman \cite[Theorem 1.1]{Coleman-OverconvergentModularFormsOfHigherLevel}. The modifications for the critical slope $\nu = k-1$ case and non-trivial $\chi$ follow from \cite[Proposition 2.5]{Bergdall-CompanionPoints} (after specifying the $p$-part of the nebentypus everywhere in the displayed sequence).

With the $\theta$-operator in hand, part (a) now follows because if $\nu < k-1$ then $M^{\dagger}_{z^{2-k}\chi}(N)^{\nu-(k-1)} \subset M^{\dagger}_{z^{2-k}\chi}(N)^{\nu<0} = (0)$. And part (b) follows because $M^{\dagger}_{z^{2-k}\chi}(N)^{0}$ is spanned by $S^{\dagger}_{z^{2-k}\chi}(N)^{0}$ together with the $e(N)$-many ordinary $p$-adic Eisenstein series for $\Gamma_0(N)$ (under the assumption that $\chi(-1) = (-1)^k = (-1)^{2-k}$ so the spaces are non-zero).
\end{proof}

\subsection{}

Our strategy to prove Theorem \ref{thm:arith-progs} is to verify it for a single weight $\kappa$ in each component of weight space in the region $0 < v_p(-) < r$.  (This suffices by Proposition \ref{prop:sufficient}.)  We note that, by Lemma \ref{lemma:weight-absolute-values}, for each $\cal W_\eta$, we can find arithmetic weights of the form $\kappa = z^2\chi$ with $\chi$ finite order and $0 < v_p(w(\kappa)) < r$.

\begin{theorem}\label{thm:arith-progressions}
Assume Conjecture \ref{conj:in-text} holds for every component and choose an $r<1$ which witnesses the conjecture on every component simultaneously. 
Fix a component $\cal W_\eta$, and choose $\kappa = z^2\chi \in \cal W_\eta$ with $\chi$ finite order and $0 < v_p(w(\kappa)) < r$.

The set $\nu_{z^2\chi}^{\dag}$ is a finite union of arithmetic progressions. More specifically, if 
$$
\nu_{\eta,\seed} := \set{1,\dots,\frac{|\Delta|}{2}}^{\oplus e(N)} \union \bigunion_{j = 0}^{p-3 \over 2}  \left( j + \nu_{z^2 \chi \omega^{-2j}}^{\cl}\right),
$$
then
$$
\nu^{\dag}_{z^2\chi} = \bigunion_{i=0}^\infty \left(\nu_{\eta,\seed} + i \cdot \frac{|\Delta|}{2}\right).
$$
%\begin{align*}
%\nu_{z^2\chi}^{\dag} &= \set{1,2,3,\dotsc}^{\oplus e(N)} \union \bigunion_{j = 0}^{p-3 %\over 2} \bigunion_{i = 0}^{\infty} \left( j + \nu_{z^2 \chi \omega^{-2j}}^{\cl} + {p-1\over %2} \cdot i\right) & \text{(if $p > 2$),}\\
%\nu_{z^2\chi}^{\dag} &= \set{1,2,3,\dotsc}^{e(N)} \union \bigunion_{i=0}^\infty %\left(\nu^{\cl}_{z^2\chi}  + i\right) & \text{(if $p = 2$).}
%\end{align*}
%\begin{align*}
%\nu_{z^2\chi}^{\dag} &= \set{1,2,3,\dotsc}^{\oplus e(N)} \union \bigunion_{j = 1}^{p-1 \over 2} \bigunion_{i = 0}^{\infty} \left( (j - \nu_{z^2 \chi^{-1}\omega^{2(j-1)}}^{\cl,[0,1]}) + {p-1\over 2} \cdot i\right) & \text{(if $p > 2$),}\\
%\nu_{z^2\chi}^{\dag} &= \set{1,2,3,\dotsc}^{e(N)} \union \bigunion_{i=1}^\infty \left(-\nu^{\cl,[0,1]}_{z^2\chi^{-1}}  + i\right) & \text{(if $p = 2$).}
%\end{align*}
\end{theorem}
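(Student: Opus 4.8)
\noindent
The plan is to compute the multiset $\nu^{\dag}_{z^2\chi}$ explicitly by moving the weight up the component $\cal W_\eta$ (and across neighbouring components), reading off classical slope data through Coleman's control theorem, and then exploiting a periodicity coming from the Teichm\"uller character. For $j\ge 0$ put $\lambda_j := z^{2j+2}\chi\omega^{-2j}$. Since $\omega$ is trivial on $\Gamma$ one has $\restrict{\lambda_j}{\Delta} = \omega^2\restrict{\chi}{\Delta} = \eta$, so every $\lambda_j$ lies on $\cal W_\eta$; and $w(\lambda_j)=\gamma^{2j+2}\chi(\gamma)-1$ differs from $w(z^2\chi)=\gamma^2\chi(\gamma)-1$ by a factor $\gamma^{2j}\in 1+p\Z_p$, so $v_p(w(\lambda_j)) = v_p(w(z^2\chi)) < r$. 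By Conjecture \ref{conj:in-text} the multiset $\nu^{\dag}_{\lambda_j}$ is then independent of $j$, equal to $\nu^{\dag}_{z^2\chi}$. The same computation gives $v_p(w(z^2\chi\omega^{-2\ell})) = v_p(w(z^2\chi))$ for every $\ell$, and since $\omega^{-2}$ has order $\tfrac{p-1}{2}=\tfrac{|\Delta|}{2}$, this will be the source of the eventual period.

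\noindent
Each $\lambda_j$ is the arithmetic weight $z^{2j+2}(\chi\omega^{-2j})$ of classical weight $2j+2$; it is even (as $\chi(-1)=1$) and different from $z^2$ (as $\chi$ is ramified). Theorem \ref{thm:coleman-control}(a) applied at $\lambda_j$ gives $\nu^{[0,2j+1)}_{z^2\chi} = \nu^{\cl,[0,2j+1)}_{z^{2j+2}\chi\omega^{-2j}}$, so letting $j\to\infty$ recovers all of $\nu^{\dag}_{z^2\chi}$ from classical slope data; part (b), at the critical slope $\nu = 2j+1$, records the multiplicity there as $e(N)$ copies of $2j+1$ (the ordinary Eisenstein families pushed up by the $\theta$-operator), together with $2j+1 + \nu_{z^{-2j}\chi\omega^{-2j}}^{\set{0}}$ (the $\theta$-image of the ordinary cuspidal part) and the genuinely classical critical-slope forms. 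The heart of the proof is then to cut the classical slopes of $S_{2j+2}(\Gamma_1(Np^t),\chi\omega^{-2j})$ into windows of length one: the slopes lying in $[\ell,\ell+1)$, for $0\le \ell\le 2j$, should be exactly $\ell + \nu^{\cl,[0,1)}_{z^2\chi\omega^{-2\ell}}$ --- the image under $\theta^{\ell}$, which raises the $p$-adic weight by $z^{2\ell}$ and the slope by $\ell$ and lands back on $\cal W_\eta$, of the weight-two cuspforms of nebentype $\chi\omega^{-2\ell}$. That $\theta^{\ell}$ surjects onto this window, so that the slopes match on the nose, is forced by Coleman's classicality theorem --- a weight-$(2\ell+2)$ form of slope $<\ell+1\le 2\ell+1$ is classical --- together with the Atkin--Lehner involution of Corollary \ref{cor:atkin-lehner}, which turns slope $\alpha$ at weight $k$ with nebentype $\psi$ into slope $k-1-\alpha$ with nebentype $\psi^{-1}$, and thereby pins the window down to $\theta^{\ell}$-divisible forms; Coleman's classicality in weight two then identifies the outcome with $\nu^{\cl,[0,1)}_{z^2\chi\omega^{-2\ell}}$.

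\noindent
To assemble, write $\ell = j + i\tfrac{|\Delta|}{2}$ with $0\le j\le\tfrac{p-3}{2}$ and $i\ge 0$; since $\omega^{-2\ell}$ depends only on $j$, the length-one windows contribute $\bigunion_i\bigunion_j\bigl(j + i\tfrac{|\Delta|}{2} + \nu^{\cl}_{z^2\chi\omega^{-2j}}\bigr)$, while the integer-slope contributions from the previous paragraph, carried by the $\theta^{\ell}$-images of the $e(N)$ ordinary Eisenstein families, assemble into $e(N)$ copies of each positive integer, i.e.\ $\bigunion_i\bigl(\set{1,\dots,\tfrac{|\Delta|}{2}}^{\oplus e(N)} + i\tfrac{|\Delta|}{2}\bigr)$. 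Together these yield $\nu^{\dag}_{z^2\chi} = \bigunion_{i\ge 0}\bigl(\nu_{\eta,\seed} + i\tfrac{|\Delta|}{2}\bigr)$. Since $\nu_{\eta,\seed}$ is a finite multiset and $\tfrac{|\Delta|}{2}$ a positive integer, each of its elements spans an arithmetic progression with common difference $\tfrac{|\Delta|}{2}$, so $\nu^{\dag}_{z^2\chi}$ is a finite union of arithmetic progressions.

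\noindent
The main obstacle is the window decomposition: proving that every classical form feeding the $[\ell,\ell+1)$ window at weight $2\ell+2$ really descends, via $\theta^{\ell}$, to weight two with nebentype twisted by precisely $\omega^{-2\ell}$, and that the critical-slope bookkeeping --- separating ordinary Eisenstein families, $\theta$-images of ordinary cuspforms, and genuinely classical critical-slope forms --- exactly matches the $e(N)$-fold integer contributions. This is a finite combinatorial and representation-theoretic argument, but it requires keeping track simultaneously of $p$-adic valuations of weights, parities, the behaviour of the ordinary (Hida and Eisenstein) families under the $\omega$-twists, and the exclusion of the exceptional weight $z^2$, across all components of weight space at once.
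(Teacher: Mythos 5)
Your setup (the weights $\lambda_j=z^{2j+2}\chi\omega^{-2j}$ all lie on $\cal W_\eta$ at the same radius), your final periodicity argument via $\omega^{-2}$, and the formula you arrive at are all correct, but the step you yourself flag as ``the main obstacle'' --- the window decomposition --- is where the entire content of the theorem lies, and the mechanism you propose for it does not exist. There is no operator ``$\theta^\ell$'' in Coleman's theory carrying weight-two forms of nebentype $\chi\omega^{-2\ell}$ onto the slope-$[\ell,\ell+1)$ part of weight $2\ell+2$: Coleman's map is $\theta^{k-1}\colon M^\dagger_{z^{2-k}\psi}(N)\to S^\dagger_{z^{k}\psi}(N)$, whose source has weight $2-k\le 0$, and at weight $k=2\ell+2$ its image consists only of the forms of slope $\ge 2\ell+1$. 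A classical form of weight $2\ell+2$ and slope in $[\ell,\ell+1)$ is in general not ``$\theta^\ell$-divisible'' by a weight-two form (already at weight $4$, slope-one forms are not $\theta$-images of ordinary weight-two forms), so the window cannot be pinned down this way, and Atkin--Lehner alone does not repair it.

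The missing idea is that identifying the window $[\ell,\ell+1)$ with weight-two classical data must pass through \emph{other components} of weight space --- which is precisely why the theorem hypothesizes Conjecture \ref{conj:in-text} on all components simultaneously, a hypothesis you state but never actually use away from $\cal W_\eta$. The correct route (Proposition \ref{prop:intermediate} in the paper) is: use the conjecture on $\cal W_\eta$ to slide the window to the weight $z^{\ell+2}\chi\omega^{-\ell}$, where slopes $<\ell+1$ are sub-critical and hence classical by Theorem \ref{thm:coleman-control}(a); apply Atkin--Lehner (Corollary \ref{cor:atkin-lehner}) to flip it to the window $(0,1]$ at weight $\ell+2$ with nebentype $\chi^{-1}\omega^{\ell}$, which lands on a \emph{different} component; apply the conjecture on \emph{that} component to slide down to weight $z^2\chi^{-1}\omega^{2\ell}$; treat the slope-one boundary there with Theorem \ref{thm:coleman-control}(b) together with Hida theory at slope zero (this is exactly where the $e(N)$ Eisenstein copies and the extra integer-slope cuspidal terms enter); and finally apply Atkin--Lehner a second time to land on $\nu^{\cl}_{z^2\chi\omega^{-2\ell}}$. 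Without the two Atkin--Lehner flips and the application of the conjecture on the foreign component, the window formula --- and with it the theorem --- remains unproven.
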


\begin{remark}
Since $r < 1$ in Theorem \ref{thm:arith-progressions}, the conductor of $\chi$ is at least $p^2$ if $p$ is odd, or at least $16$ if $p=2$. Thus by Lemma \ref{lemma:weight-absolute-values}, we have $v_p(w(z^2\chi)) = v_p(w(z^k\chi))$ for any $k \in \Z$.  Thus, by Conjecture \ref{conj:in-text}, the above theorem holds for weights of the form $z^k \chi$ for any $k \in \Z$.  
\end{remark}

\begin{remark}
By Proposition \ref{prop:sufficient}, Theorem \ref{thm:arith-progressions} implies Theorem \ref{thm:arith-progs}.
\end{remark}

\begin{remark}
The prediction of the slopes in the theorem was guessed by Wan, Xiao and Zhang \cite[Remark 2.7]{WanXiaoZhang-Slopes}.  As mentioned in the introduction, an argument similar to the one we are about to give was noticed independently by Liu, Wan and Xiao in the sequel \cite{LiuXiaoWan-IntegralEigencurves} to \cite{WanXiaoZhang-Slopes}.
\end{remark}
\begin{remark}
An amusing feature of the end result is that the slopes in the component containing $z^2\chi$ are naturally generated by the slopes in {\em other components} of weight space. Is there a symmetry giving rise to this phenomena?
\end{remark}

The proof of Theorem \ref{thm:arith-progressions} will appear in the next paragraph. But first, Theorem \ref{thm:arith-progressions} expresses the slopes of overconvergent $p$-adic cuspforms of a fixed weight $z^2\chi$ (with $\chi$ even and sufficiently ramified) as a union of arithmetic progressions with common difference $\frac{|\Delta|}{2}$.  The following corollary, whose proof we leave to the reader, shows that if we union together these slopes over all components of weight space, the arithmetic progressions mesh nicely together and form a finite union of arithmetic progressions with common difference 1.  

\begin{corollary}\label{cor:arith-progressions}
Assume Conjecture \ref{conj:in-text} holds for every component and choose an $r<1$ which witnesses the conjecture on every component simultaneously. 
Choose $\kappa = z^2\chi \in \cal W$ with $\chi$ finite order, even and $0 < v_p(w(\kappa)) < r$.
Set $\displaystyle \nu_{\seed} := \{1\}^{\oplus \frac{|\Delta| e(N)}{2}} \union \bigunion_{\eta} \nu^{\cl}_{z^2 \chi \eta}$
and
$\displaystyle \nu^{\dag} := \bigunion_{\eta} \nu^{\dag}_{z^2 \chi \eta}$.
Then
$$
\nu^{\dag} = \bigunion_{i=0}^\infty \left( \nu_{\seed} + i \right).
$$
\end{corollary}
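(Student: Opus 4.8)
The plan is to deduce Corollary \ref{cor:arith-progressions} from Theorem \ref{thm:arith-progressions} by taking the union over all characters $\eta \in \hat{\Delta}$ of the component-wise descriptions, and then checking that the shifted copies of the various seeds interleave to produce a single arithmetic progression of common difference $1$. First I would fix $\kappa = z^2\chi$ with $\chi$ finite order, even, and $0 < v_p(w(\kappa)) < r$. For each $\eta$, the weight $z^2\chi\eta$ lies in the component $\cal W_\eta$ (after absorbing $\eta$ into the tame-at-$p$ part), and by the first remark after Theorem \ref{thm:arith-progressions} we have $v_p(w(z^2\chi\eta)) = v_p(w(z^2\chi))$ since the conductor of $\chi$ is at least $p^2$ (or $16$ when $p=2$); in particular all these weights sit in the region where Conjecture \ref{conj:in-text} applies. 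Applying Theorem \ref{thm:arith-progressions} on each component gives
$$
\nu^{\dag}_{z^2\chi\eta} = \bigunion_{i=0}^\infty \left( \nu_{\eta,\seed} + i\cdot \tfrac{|\Delta|}{2} \right),
\qquad
\nu_{\eta,\seed} = \set{1,\dots,\tfrac{|\Delta|}{2}}^{\oplus e(N)} \union \bigunion_{j=0}^{\frac{p-3}{2}} \left( j + \nu^{\cl}_{z^2\chi\eta\omega^{-2j}}\right).
$$

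Next I would take the union over $\eta$. The Eisenstein part contributes $\bigunion_\eta \set{1,\dots,\tfrac{|\Delta|}{2}}^{\oplus e(N)}$ repeated with shifts by $i\cdot\tfrac{|\Delta|}{2}$; since $\hat{\Delta}$ has $|\Delta|$ elements but only the even ones give nonzero spaces — wait, here all $\eta$ are allowed since $\chi\eta$ need only have $(\chi\eta)(-1)=1$ matched against weight $2$, so exactly the even $\eta$ contribute, of which there are $|\Delta|/2$ — the multiset $\bigunion_{\eta \text{ even}} \set{1,\dots,\tfrac{|\Delta|}{2}}^{\oplus e(N)}$ together with its shifts $+i\tfrac{|\Delta|}{2}$ is precisely $\set{1,2,3,\dots}$ taken with multiplicity $e(N)$, which is the same as $\{1\}^{\oplus \frac{|\Delta| e(N)}{2}}$ repeated with shifts by $1$. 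For the cuspidal part, as $\eta$ ranges over even characters and $j$ over $0,\dots,\frac{p-3}{2}$, the character $\eta\omega^{-2j}$ ranges (for each fixed $\eta$) over a coset, and the contribution $j + \nu^{\cl}_{z^2\chi\eta\omega^{-2j}}$ shifted by $i\tfrac{|\Delta|}{2}$ reorganizes: writing $n = j + i\cdot\frac{p-1}{2}\cdot\frac{|\Delta|}{p-1}$ — more cleanly, the index $(i,j,\eta)$ should be repackaged as a single index $i' \geq 0$ together with a character $\eta'$, using that $\omega$ generates a cyclic subgroup of $\hat\Delta$ of order $p-1$ (for $p$ odd) and $\tfrac{|\Delta|}{2} = \tfrac{(p-1)}{2}\cdot(\text{order of }\Delta/\langle\text{image}\rangle)$, so that as $j$ runs through $0,\dots,\frac{p-3}{2}$ and $i$ through $\N$, the quantity $j + i\cdot\frac{|\Delta|}{2}$ runs through all of $\N$ exactly once while $\eta\omega^{-2j}$ runs through all even characters. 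This bookkeeping shows $\bigunion_\eta \nu^{\dag}_{z^2\chi\eta} = \bigunion_{i=0}^\infty(\nu_{\seed} + i)$ with $\nu_{\seed} = \{1\}^{\oplus\frac{|\Delta|e(N)}{2}}\union\bigunion_{\eta}\nu^{\cl}_{z^2\chi\eta}$.

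The main obstacle is the combinatorial reindexing: one must verify carefully that the map $(i, j, \eta) \mapsto (n, \eta') := (j + i\tfrac{|\Delta|}{2},\ \eta\omega^{-2j})$ — or more precisely the relevant variant adapted to how $\omega$ sits inside $\hat\Delta$ and whether $p=2$ — is a bijection from $\N \times \{0,\dots,\frac{p-3}{2}\} \times \{\text{even }\eta\}$ onto $\N \times \{\text{even }\eta\}$, compatibly with the identification $n = n' + i'$ where $n'$ is the residue of $n$ in an appropriate range. This is where the hypothesis that $\chi$ is sufficiently ramified (so that $v_p(w)$ is the same across all components) and the structure $\tfrac{|\Delta|}{2}$ of the common difference both get used; the $p=2$ case, where $|\Delta|=2$ and the sum over $j$ is empty, should be handled separately and is in fact simpler. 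Once the reindexing is pinned down, the rest is the multiset manipulation already indicated, together with Lemma \ref{lemm:simple-lemma} to confirm the result is genuinely a finite union of arithmetic progressions (with common difference $1$, as opposed to $\tfrac{|\Delta|}{2}$ component-by-component). As the statement says the proof is left to the reader, I would present this as a concise two-paragraph sketch highlighting the reindexing and the collapse of the Eisenstein multiplicities.
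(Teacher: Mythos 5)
Your proposal is correct and follows essentially the same route as the paper's own (unpublished) argument: apply Theorem \ref{thm:arith-progressions} on each even component, union over $\eta$, collapse the Eisenstein multiplicities, and reindex the cuspidal part via the bijection $(i,j,\eta)\mapsto\bigl(j+i\tfrac{p-1}{2},\,\eta\omega^{-2j}\bigr)$ (the paper's substitution $m=n-j$), with the $p=2$ case degenerating to the theorem itself. The reindexing you flag as the "main obstacle" is exactly the step the paper carries out, and since $|\Delta|=p-1$ with $\omega$ generating $\hat\Delta$ for $p$ odd, it goes through as you describe.
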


\begin{comment}
\begin{proof}
For $p=2$ this corollary is identical to Theorem \ref{thm:arith-progressions}.  For $p>2$, we have
\begin{align*}
\nu^{\dag} &= \bigunion_{n=0}^{\frac{p-3}{2}} \nu^\dag_{z^2 \chi \omega^{2r}} 
= \bigunion_{n=0}^{\frac{p-3}{2}} 
\left( \set{1,2,3,\dotsc}^{\oplus e(N)} \union \bigunion_{j = 0}^{p-3 \over 2} \bigunion_{i = 0}^{\infty} \left( j + \nu_{z^2 \chi \omega^{2n-2j}}^{\cl} + {p-1\over 2} \cdot i\right) \right) \\
&= \set{1,2,3,\dotsc}^{\oplus \frac{(p-1)e(N)}{2}} 
\union
\bigunion_{n = 0}^{p-3 \over 2} 
\bigunion_{j = 0}^{p-3 \over 2} 
\bigunion_{i = 0}^{\infty} \left( j + \nu_{z^2 \chi \omega^{2n-2j}}^{\cl} + {p-1\over 2} \cdot i\right).
\end{align*}
For a fixed $m=n-j$,  for each value of $j=0,\dots,\frac{p-3}{2}$, there is a unique of $n=0,\dots,\frac{p-3}{2}$ such that $\omega^{2n-2j} = \omega^{2m}$.  Thus
\begin{align*}
\nu^{\dag} 
&= \set{1,2,3,\dotsc}^{\oplus \frac{(p-1)e(N)}{2}} 
\union
\bigunion_{m = 0}^{p-3 \over 2} 
\bigunion_{j = 0}^{p-3 \over 2} 
\bigunion_{i = 0}^{\infty} \left( j + \nu_{z^2 \chi \omega^{2m}}^{\cl} + {p-1\over 2} \cdot i\right)\\
&= \set{1,2,3,\dotsc}^{\oplus \frac{(p-1)e(N)}{2}} 
\union
\bigunion_{m = 0}^{p-3 \over 2} 
\bigunion_{i = 0}^{\infty} \left( \nu_{z^2 \chi \omega^{2m}}^{\cl} +  i\right)\\
&= 
\bigunion_{i = 0}^{\infty} \left( 
\left( \set{1}^{\oplus \frac{(p-1)e(N)}{2}}  \union \bigunion_{m = 0}^{p-3 \over 2} \nu_{z^2 \chi \omega^{2m}}^{\cl} \right) +  i\right) 
= 
\bigunion_{i = 0}^{\infty} \left( 
\nu_{\seed} +  i\right)
\end{align*}
as desired.
\end{proof}
\end{comment}

\subsection{}
The technique we use to prove Theorem \ref{thm:arith-progressions} is to examine the slopes in finite intervals and then take their union and rearrange. We begin with the following proposition.

\begin{proposition}\label{prop:intermediate}
Under the assumptions and notation of Theorem \ref{thm:arith-progressions}, if $k > 2$ is an integer then
\begin{equation*}
\nu_{z^2\chi}^{[k-2,k-1)} = \set{k-2}^{\oplus e(N)} \union \left((k-2) - \nu_{z^2\chi^{-1}\omega^{2k-6}}^{\cl,\set{0}}\right) \union \left((k-1) - \nu_{z^2\chi^{-1}\omega^{2k-4}}^{\cl,(0,1]}\right).
\end{equation*}
\end{proposition}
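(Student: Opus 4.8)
\textbf{Proof plan for Proposition \ref{prop:intermediate}.}

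The plan is to understand the slopes of overconvergent cuspforms of weight $z^2\chi$ that lie in the half-open interval $[k-2,k-1)$ by comparing them, via the Atkin--Lehner involution and Coleman's control theorem, with slopes attached to the weight $z^k$ times an appropriate twist of $\chi$. The key observation is that since $0<v_p(w(z^2\chi))<r<1$, Lemma \ref{lemma:weight-absolute-values} gives $v_p(w(z^2\chi)) = v_p(w(z^k\chi))$, so Conjecture \ref{conj:in-text} tells us the Newton polygon of $P(\kappa,t)$ is the same for $\kappa = z^2\chi$ and $\kappa = z^k\chi\omega^{2-k}$ (these two weights have the same restriction to $\Delta$ since $\omega$ kills the $\Gamma$-part; note $z^k\omega^{2-k}$ restricted to $\Delta$ agrees with $z^2$). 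Hence $\nu_{z^2\chi}^{\dagger} = \nu_{z^k\chi\omega^{2-k}}^{\dagger}$ as multisets, and in particular the two agree after intersecting with $[k-2,k-1)$.

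The first step is thus to rewrite $\nu_{z^2\chi}^{[k-2,k-1)}$ as $\nu_{z^k\psi}^{[k-2,k-1)}$ where $\psi := \chi\omega^{2-k}$. Now $[k-2,k-1)$ is a subinterval of $[0,k-1]$, so I want to split it as $\{k-2\} \sqcup (k-2,k-1)$ and treat the endpoint $k-2$ and the open piece $(k-2,k-1)$ separately — both sit below the critical slope $k-1$, so Theorem \ref{thm:coleman-control}(a) applies and $\nu_{z^k\psi}^{[k-2,k-1)} = \nu_{z^k\psi}^{\cl,[k-2,k-1)}$. Next I apply the Atkin--Lehner involution (Corollary \ref{cor:atkin-lehner}): $\nu^{\cl}_{z^k\psi} = (k-1) - \nu^{\cl}_{z^k\psi^{-1}}$, so slopes of $z^k\psi$ in $[k-2,k-1)$ correspond to slopes of $z^k\psi^{-1}$ in $(0,1]$. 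This converts the problem to understanding $\nu_{z^k\psi^{-1}}^{\cl,(0,1]}$. One then wants to pull this back down to weight $2$ again: by the same valuation argument and Conjecture \ref{conj:in-text}, $\nu_{z^k\psi^{-1}}^{\dagger} = \nu_{z^2\psi^{-1}\omega^{k-2}}^{\dagger}$; and $\psi^{-1}\omega^{k-2} = \chi^{-1}\omega^{k-2}\omega^{k-2} = \chi^{-1}\omega^{2k-4}$. So the overconvergent slopes in $(0,1]$ match between weight $z^k\psi^{-1}$ and weight $z^2\chi^{-1}\omega^{2k-4}$. One also has to pass between overconvergent and classical slopes in this low range; slopes in $(0,1)$ are non-critical for weight $2$ (since $k-1=1$ there), so Theorem \ref{thm:coleman-control}(a) handles $(0,1)$, while the endpoint slope $1$ is the critical slope for weight $2$ and must be treated via Theorem \ref{thm:coleman-control}(b), which is exactly where the $e(N)$ Eisenstein contribution and the term involving $\nu^{\set 0}$ of the dual weight enter. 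Unwinding Theorem \ref{thm:coleman-control}(b) at $\kappa = z^2\chi^{-1}\omega^{2k-4}$ and matching the dual-weight twist $z^{2-k}$ there gives $z^{2-k}\chi^{-1}\omega^{2k-4}$, whose $\Delta$-part agrees with $z^2\chi^{-1}\omega^{2k-6}$ — producing the $\nu_{z^2\chi^{-1}\omega^{2k-6}}^{\cl,\set 0}$ term, shifted appropriately. Reassembling the three pieces (the Eisenstein block at slope $k-2$, the $\set 0$-slope dual contribution, and the $(0,1]$-slope classical contribution, each shifted by the relevant constant coming from the Atkin--Lehner reflection $(k-1)-(\cdot)$ or $(k-2)-(\cdot)$) yields exactly the stated formula.

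The main obstacle I expect is bookkeeping the nebentype twists consistently: each application of Conjecture \ref{conj:in-text} to move between weight $2$ and weight $k$, each Atkin--Lehner reflection, and each invocation of the $\theta$-operator in Theorem \ref{thm:coleman-control}(b) shifts the tame-at-$p$ character by a power of $\omega$, and one must verify at every stage that the weights really do lie on the same component of weight space (so that Conjecture \ref{conj:in-text} is applicable) and that the slope intervals transform correctly under the reflections $x \mapsto (k-1)-x$ and $x \mapsto (k-2)-x$. The analytic content is light — it is all Conjecture \ref{conj:in-text}, Coleman's control theorem, and Atkin--Lehner — but the exponents of $\omega$ must be tracked with care; in particular one should double-check that $v_p(w(-)) < r < 1$ forces every intermediate weight to have conductor at least $p^2$ (or $16$ at $p=2$), so that Lemma \ref{lemma:weight-absolute-values} applies to guarantee equality of weight-valuations across all the twists involved.
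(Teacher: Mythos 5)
Your plan is correct and follows essentially the same route as the paper: transfer from weight $2$ to weight $k$ via Conjecture \ref{conj:in-text}, apply Coleman's control theorem and the Atkin--Lehner reflection (flipping $[k-2,k-1)$ to $(0,1]$), transfer back to weight $2$, split off the critical slope $1$ via Theorem \ref{thm:coleman-control}(b), and identify the slope-$0$ dual contribution with $\nu_{z^2\chi^{-1}\omega^{2k-6}}^{\cl,\set{0}}$ via Hida theory and Coleman's theorem again. The only slip is notational: applying Theorem \ref{thm:coleman-control}(b) at the weight $z^2\chi^{-1}\omega^{2k-4}$ produces the dual weight $z^{0}\chi^{-1}\omega^{2k-4}=\chi^{-1}\omega^{2k-4}$ (not $z^{2-k}\chi^{-1}\omega^{2k-4}$ with the proposition's $k$), but your subsequent identification of its component with that of $z^2\chi^{-1}\omega^{2k-6}$ is the right one.
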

\begin{proof}
We first note for later that $\chi$ is an even character since $z^2\chi \in \cal W_{\eta}$ and $\eta$ is assumed to be even. We also note that $z^2 \chi$ and $z^k \chi \omega^{2-k}$ are two arithmetic weights living in the same component of weight space and, by Lemma \ref{lemma:weight-absolute-values}, they live on the same rim within their weight disc (i.e.\ have the same valuation).  Thus, since we are assuming Conjecture \ref{conj:in-text}, we have $\nu_{z^2\chi}^{[k-2,k-1)} = \nu_{z^k \chi \omega^{2-k}}^{[k-2,k-1)}$.  By Coleman's control theorem, Theorem \ref{thm:coleman-control}(a), we have
\begin{equation*}
\nu_{z^2\chi}^{[k-2,k-1)} = \nu_{z^k \chi \omega^{2-k}}^{\cl,[k-2,k-1)}.
\end{equation*}
But now spaces of classical cuspforms with nebentypus have the Atkin-Lehner symmetries and so by Corollary \ref{cor:atkin-lehner} we deduce
\begin{equation}\label{eqn:save1}
\nu_{z^2\chi}^{[k-2,k-1)}=  \nu_{z^k \chi \omega^{2-k}}^{\cl,[k-2,k-1)} = (k-1) - \nu_{z^k\chi^{-1}\omega^{k-2}}^{\cl,(0,1]}
\end{equation}
(notice the careful switch of the ends of the interval). Thus it remains to compute the term on the right-hand side.

By Coleman's control theorem again, since $k > 2$, we can erase the classical bit from the last part:
\begin{equation*}
\nu_{z^k\chi^{-1}\omega^{k-2}}^{\cl,(0,1]} = \nu_{z^k\chi^{-1}\omega^{k-2}}^{(0,1]}.
\end{equation*}
But now we apply Conjecture \ref{conj:in-text} to the weight $z^k\chi^{-1}\omega^{k-2}$, which generally lives on a new component than the one we started with (which is why we need to assume the conjecture for all components at once). The two weights $z^k\chi^{-1}\omega^{k-2}$ and $z^2 \chi^{-1}\omega^{2k-4}$ live in the same component of $\cal W$, so by Conjecture \ref{conj:in-text}, we get the second equality in:
\begin{equation}\label{eqn:save2}
\nu_{z^k\chi^{-1}\omega^{k-2}}^{\cl,(0,1]} = \nu_{z^k\chi^{-1}\omega^{k-2}}^{(0,1]} = \nu_{z^2 \chi^{-1}\omega^{2k-4}}^{(0,1]}.
\end{equation}
And now we have to be a little careful, since slope one forms in a space of weight two overconvergent cuspforms are not necessarily classical. We do know from Coleman's control theorem that
\begin{equation}\label{eqn:save3}
 \nu_{z^2 \chi^{-1}\omega^{2k-4}}^{(0,1)} =  \nu_{z^2 \chi^{-1}\omega^{2k-4}}^{\cl, (0,1)}.
\end{equation}
On the other hand, the boundary case of Coleman's control theorem (Theorem \ref{thm:coleman-control}(b) -- note that here we are using that $\chi$ is an even character), applied to $k=2$, gives us 
\begin{equation}\label{eqn:save4}
 \nu_{z^2 \chi^{-1}\omega^{2k-4}}^{\set{1}} = \set{1}^{\oplus e(N)} \union \left(1 + \nu_{\chi^{-1}\omega^{2k-4}}^{\set{0}}\right) \union \left(\nu_{z^2 \chi^{-1}\omega^{2k-4}}^{\cl, \set{1}} \right).
\end{equation}
Finally, since $\chi^{-1}\omega^{2k-4}$ lies in the same component as the weight two point $z^2\chi^{-1}\omega^{2k-6}$, by Hida theory (or Conjecture \ref{conj:in-text}) and Theorem \ref{thm:coleman-control}(a), we have
\begin{equation}\label{eqn:save5}
1 + \nu_{\chi^{-1}\omega^{2k-4}}^{\set{0}} = 1 + \nu_{z^2\chi^{-1}\omega^{2k-6}}^{\set{0}} = 1 + \nu_{z^2\chi^{-1}\omega^{2k-6}}^{\cl, \set{0}},
\end{equation}
Putting it all together gives
\begin{align*}
&\nu_{z^2\chi}^{[k-2,k-1)}\\ &= (k-1) - \nu_{z^k\chi^{-1}\omega^{k-2}}^{\cl,(0,1]} & \text{(by \eqref{eqn:save1})}\\
&= (k-1) - \nu_{z^2 \chi^{-1}\omega^{2k-4}}^{(0,1]} & \text{(by \eqref{eqn:save2})}\\
&= (k-1) - \left(\nu_{z^2 \chi^{-1}\omega^{2k-4}}^{\cl, (0,1)} \union \set{1}^{\oplus e(N)} \union \left(1 + \nu_{\chi^{-1}\omega^{2k-4}}^{\set{0}}\right) \union \left(\nu_{z^2 \chi^{-1}\omega^{2k-4}}^{\cl, \set{1}} \right)\right)  & \text{(by \eqref{eqn:save3}, \eqref{eqn:save4})}\\
&= (k-1) -  \left(\set{1}^{\oplus e(N)} \union \left(1 + \nu_{z^2\chi^{-1}\omega^{2k-6}}^{\cl ,\set{0}}\right) \union \nu_{z^2\chi^{-1}\omega^{2k-4}}^{\cl,(0,1]}\right) & \text{(by \eqref{eqn:save5})}.
\end{align*}
We're done now after distributing the $k-1$ everywhere.
\end{proof}

\subsection{}
We're now ready to prove Theorem \ref{thm:arith-progressions} and thus Theorem \ref{thm:arith-progs}.
\begin{proof}[Proof of Theorem \ref{thm:arith-progressions}]
By Theorem \ref{thm:coleman-control}(a) and Corollary \ref{cor:atkin-lehner}, we have
\begin{equation*}
\nu_{z^2\chi}^{[0,1)}  = \nu_{z^2\chi}^{\cl, [0,1)} = 1 - \nu_{z^2\chi^{-1}}^{\cl, (0,1]}.
\end{equation*}
For higher slopes we get, using Proposition \ref{prop:intermediate}, that
\begin{align*}
\nu_{z^2\chi}^{[1,2)} &= \set{1}^{\oplus e(N)} \union \left(1 - \nu_{z^2\chi^{-1}}^{\cl,\set{0}}\right) \union \left( 2 - \nu_{z^2\chi^{-1}\omega^{2}}^{\cl,(0,1]}\right),\\
\nu_{z^2\chi}^{[2,3)} &= \set{2}^{\oplus e(N)} \union \left(2 - \nu_{z^2\chi^{-1}\omega^2}^{\cl,\set{0}}\right) \union \left(3 - \nu_{z^2\chi^{-1}\omega^{4}}^{\cl,(0,1]}\right),\\
\nu_{z^2\chi}^{[3,4)} &= \set{3}^{\oplus e(N)} \union \left(3 - \nu_{z^2\chi^{-1}\omega^4}^{\cl,\set{0}}\right) \union \left(4 - \nu_{z^2\chi^{-1}\omega^{6}}^{\cl,(0,1]}\right),\\
\vdots\;\; &= \;\;\vdots
\end{align*}
From this, we prove easily by induction that
\begin{equation*}
\nu_{z^2\chi}^{\dag} = \set{1,2,3,\dotsc}^{\oplus e(N)} \union \bigunion_{j\geq 1} \left(j - \nu_{z^2\chi^{-1}\omega^{2(j-1)}}^{\cl,[0,1]}\right).
\end{equation*}
By Corollary \ref{cor:atkin-lehner}, we have
$$
j - \nu_{z^2\chi^{-1}\omega^{2(j-1)}}^{\cl,[0,1]} = j - (1-\nu_{z^2\chi\omega^{2(1-j)}}^{\cl,[0,1]}) = j - 1 + \nu_{z^2\chi\omega^{2(1-j)}}^{\cl}
$$
so that
\begin{equation}\label{eqn:almost-there!}
\nu_{z^2\chi}^{\dag} 
%= \set{1,2,3,\dotsc}^{\oplus e(N)} \union \bigunion_{j\geq 1} \left(j - 1 + \nu_{z^2\chi\omega^{2(1-j)}}^{\cl}\right)
= \set{1,2,3,\dotsc}^{\oplus e(N)} \union \bigunion_{j\geq 0} \left(j + \nu_{z^2\chi\omega^{-2j}}^{\cl}\right).
\end{equation}

If $p = 2$ then $\omega^2 = 1$, and we have proven the desired formula. Suppose now that $p$ is odd. Since $\omega$ has order $p-1$, if $j \congruent j' \bmod {p-1 \over 2}$ we get $\omega^{2j} = \omega^{2j'}$.
%\footnote{JB: Just checking: we can write $\omega^{-2j}$ and $\omega^{-2j'}$ here, right?} 
Thus we also see that 
\begin{equation*}
j \congruent j' \bmod {p-1\over 2} \implies\nu_{z^2\chi\omega^{-2j}}^{\cl} = \nu_{z^2\chi\omega^{-2j'}}^{\cl}.
\end{equation*}
Rewriting the large union in \eqref{eqn:almost-there!}, we find that
\begin{equation*}
\bigunion_{j\geq 0} \left(j + \nu_{z^2\chi\omega^{-2j}}^{\cl}\right) = \bigunion_{j = 0}^{p-3 \over 2} \bigunion_{i = 0}^{\infty} \left( (j + \nu_{z^2 \chi\omega^{-2j}}^{\cl}) + {p-1\over 2} \cdot i\right).
\end{equation*}
Since
\begin{equation}
\set{1,2,3,\dotsc}^{\oplus e(N)}
=
\bigunion_{i=0}^\infty \left( \set{1,\dots,\frac{p-1}{2}}^{\oplus e(N)} + \frac{p-1}{2} \cdot i \right),
\end{equation}
the theorem is proven.
\end{proof}

%\subsection{}
%\label{subsec:exp=2N=1.5.12.20015.2.26pm}
%If $p=2$ then the weights $z^2 \chi$ and $z^2\chi^{-1}$ lie in the same component of weight space for any $\chi$. Thus, if $p=2$ we can be more aesthetic. Suppose that $0 \leq \alpha_1,\dotsc,\alpha_m < 1$ are the slopes appearing in $S_2(\Gamma_1(Np^t),\chi)$. Then, under Conjecture \ref{conj:in-text}, Theorem \ref{thm:arith-progressions} implies the list of slopes in $S_{z^2\chi}^{\dagger}(N)$ is given by
%\begin{equation*}
%\set{ i - \alpha_j \st i = 1,2,3,\dotsc \text{ and } j=1,\dotsc,m} \union \set{1,2,3,\dotsc}^{\dsum e}
%\end{equation*}
%for some integer $e\geq 1$ if $\chi$ has sufficiently large conductor. Does there exists a symmetry on the space $S_{z^2\chi}^{\dagger}(N)$ which explains the first set? 

\subsection{}\label{subsec:exp=2N=1.5.12.20015.2.26pm}
If $p=2$ and $N=1$, then Buzzard and Kilford proved Conjecture \ref{conj:in-text} in \cite{BuzzardKilford-2adc} with $r = 3$. There exists a unique even character $\chi_8$ of conductor $8$ and $v_2(w(z^2\chi_8)) = 1$. Since $S_2(\Gamma_1(8),\chi_8) = (0)$ our recipe predicts that the slopes in $S_{z^2\chi_8}^{\dagger}$ are given by $1,2,3,\dotsc$ which is consistent with what was proven in \cite{BuzzardKilford-2adc}.

On the other hand, there are two even characters $\chi_{16}$ modulo 16. And for each one, there is a unique cusp form in $S_{2}(\Gamma_1(16),\chi_{16})$. The eigenvalue of $U_2$ is checked (e.g.\ in {\tt sage} \cite{sagemath}) to be $-1 \pm i$ (depending on the character) and thus we see that the unique classical weight two slope is ${1\over 2}$. Our prediction then is that the slopes are $1,2,3,\dotsc$ together with $0+{1\over 2}, 1+{1\over 2}, 2+{1\over 2},\dotsc = {1\over 2}, {3\over 2}, {5\over 2} \dotsc$. Again, this agrees with the results proven in \cite{BuzzardKilford-2adc} since $v_2(w(z^2\chi_{16})) = {1\over 2}$.

\subsection{}\label{subsec:examplep=2N=3}
Suppose that $p=2$ and $N=3$.  Choose $\chi_8$ as in the previous example. The space $S_2(\Gamma_1(24),\chi_8)$ has dimension two, and the characteristic polynomial of $U_2$ is given by $x^2 + 2 x + 2$, giving us slope ${1\over 2}$ with multiplicity two. There are two Eisenstein series on $\Gamma_0(3)$ and thus Theorem \ref{thm:arith-progressions} implies that that the slopes in weight $z^2\chi_8$ are, if Conjecture \ref{conj:in-text} is true for some $r > 1$, given by the list
\begin{equation*}
{1\over 2},{1\over 2}, 1,1,{3\over 2}, {3\over 2}, 2,2,\dotsc .
\end{equation*}
This is in line with experimental evidence; see Table \ref{app:p=2N=3}. (Compare with Example \ref{example:p=2N=3-bad-radius}.)

\subsection{}\label{subsec:examplep=11N=1}
Now we look at a more complicated example, to give a flavor of how many arithmetic progressions are predicted by Theorem \ref{thm:arith-progressions}. Suppose that $p=11$ and $N=1$. We expect Conjecture \ref{conj:in-text} is true with $r = 1$.  Choose $\chi$ to be even, have order 11 and have conductor 121 so that $v_2(w(z^2\chi)) = {1\over 10}$.  To generate the list of arithmetic progressions of slopes appearing in $S_{z^2\chi}^{\dagger}(1)$,   Theorem \ref{thm:arith-progressions} says we need to examine the slopes occurring in $S_2(\Gamma_1(11),\chi \omega^{-2j})$ for $j=0,\dots,4$.  The following table gives these slopes (all scaled by 10, computed in {\tt sage}).
\begin{center}
\begin{tabular}{c|l}
$j$ & $10$ times slopes in $S_2(\Gamma_1(11),\chi\omega^{-2j})$\\
\hline
0 & $0, 2, 3, 4, 5, 5, 6, 7, 8, 10$\\
1 & $1, 2, 3, 4, 4, 5, 6, 7, 9, 9$\\
2 & $1, 2, 3, 3, 4, 5, 7, 8, 8, 9$\\
3 & $1, 2, 2, 3, 5, 6, 7, 7, 8, 9$\\
4 & $1, 1, 3, 4, 5, 6, 6, 7, 8, 9$
\end{tabular}
\end{center}

Theorem \ref{thm:arith-progressions} then predicts that the slopes in $S^\dag_{z^2\chi}(1)$ (scaled by 10) are given by the 55 arithmetic progressions with common difference 50 and starting terms:\
 0, 2, 3, 4, 5, 5, 6, 7, 8, 10, 10, 
 11, 12, 13, 14, 14, 15, 16, 17, 19, 19, 20, 
 21, 22, 23, 23, 24, 25, 27, 28, 28, 29, 30, 
 31, 32, 32, 33, 35, 36, 37, 37, 38, 39, 40, 
 41, 41, 43, 44, 45, 46, 46, 47, 48, 49, 50.
Note that we've included in this list the contribution of the progression $10,20,30,\dots$ (which arises since $e(1)=1$) as 5 separate arithmetic progressions with common difference 50.

By Conjecture \ref{conj:in-text},  these slopes determine the slopes in $S_2(\Gamma_1(1331),\chi')$ where $\chi'$ is an even character with order $121$ and conductor $1331$.  As a check, we computed the slopes in this space using {\tt sage} and indeed they came out exactly as predicted.

To illustrate Corollary \ref{cor:arith-progressions}, this conjecture predicts that the slopes in $\nu^{\dag} = \bigunion_{j=0}^{4} \nu^{\dag}_{z^2 \chi \omega^{2j}}$ (scaled by 10) is a union of 55 arithmetic progressions with common difference 10 and starting terms: 
$$
\set{0} \union \set{1}^{\oplus 5} \union \set{2}^{\oplus 5} \union \set{3}^{\oplus 6} \union \set{4}^{\oplus 5} \union \set{5}^{\oplus 6} \union \set{6}^{\oplus 5} \union \set{7}^{\oplus 6} \union \set{8}^{\oplus 5} \union \set{9}^{\oplus 5} \union \set{10}^{\oplus 6}.
$$
(The lone seed slope zero corresponds to the (unique) $11$-adic cuspidal Hida family of tame level one on the component of weights $k \congruent 2 \bmod 10$.)

\section{Questions and examples}\label{sec:examples}

The constants in the trace formula, Theorem \ref{thm:koike}, are easily computed on a computer as rational numbers and thus we can write the coefficients of the characteristic power series as exact elements in $\Z_p[\Gamma]$. On the other hand, for the ease of reading off interesting phenomena, it is necessary to write the coefficients as power series $a_i(w)$ in a $p$-adic variable $w$. In order to do that, we have to make the choice of a topological generator $\gamma$ for $\Gamma$ (e.g.\ $\gamma = 1+p$ if $p$ is odd and $\gamma = 5$ if $p = 2$) and convert the Iwasawa elements to power series. In doing so, we have to compute $p$-adic logarithms and thus can only work up to some $(p^N,w^M)$-adic precision. All the following computations were done this way and the code is posted at \cite{Robwebsite}.

\subsection{}
Consider the coefficients $a_i(w)$ of the characteristic power series $P(w,t)$ over a fixed component. If Conjecture \ref{conj:in-text} is true then there exists a region $0 < v_p(-) < r$ such that the break points of the Newton polygon occur at integers $i$ so that $a_i \neq 0 \bmod p$ and the zeroes of $a_i(w)$ lie in the region $v_p(-) \geq r$. We do not know any examples, for any $p$ or $N$, that disprove Conjecture \ref{conj:in-text} for the value $r=1$. In particular we have no example where $i$ is the index of a break point of the $w$-adic Newton polygon of $\bar{P(w,t)}$ but $a_i$ has a zero in the region $v_p(-) < 1$. What about the non-break points of the Newton polygon?
\begin{question}\label{quest:4.1}
Are the zeroes of $a_i(w)$ uniformly bounded inside the disc $v_p(-) \geq 1$?
\end{question}
An affirmative answer to Question \ref{quest:4.1} combined with the vanishing of the $\mu$-invariants for $N = 1$ would (easily) imply Conjecture \ref{conj:in-text} is true for $N=1$ with  $r=1$. Unfortunately, we have to give a negative answer.
\begin{answer}
No.
\end{answer}
\begin{example}\label{example:bad-zeros}
Let $p = 23$ and $N = 1$. Using weight coordinate $w = \kappa(24) - 1$ on the component corresponding to weights $k \congruent 6 \bmod 22$, we computed the $w$-adic expansions
\begin{align*}
a_1(w) &= (18\cdot 23 + \dotsb) + (20 + \dotsb)w + \dotsb,\\
a_2(w) &= (11\cdot 23^2 + \dotsb) + (4\cdot 23 + \dotsb)w + (15 + \dotsb)w^2 + \dotsb,\\
a_3(w) &= (4\cdot 23^4 + \dotsb ) + (4\cdot 23^3 + \dotsb)w + (6\cdot 23^2  + \dotsb) w^2  +  (23 + \dotsb) w^3 +\\
&(13\cdot 23 + \dotsb) w^4 + (3 + \dotsb) w^5 + \dotsb.
\end{align*}
Thus $\lambda(a_3) = 5$ and the roots of $a_3$ have $23$-adic valuation(s) $1,1,1,{1 \over 2}, {1\over 2}$. This is the smallest example of a prime $p$ with $N=1$, so that one of $a_1,\dotsc,a_4$ had zeroes outside the disc $v_{p}(-) \geq 1$. The next example we found was for $p=53$, where the culprit was $a_3(w)$ on the component corresponding to $k \congruent 4 \bmod 52$.

Computing the coefficient $a_4$ (for $p=23$ on the component containing the weight 6) we have
\begin{multline*}
a_4(w) = (21\cdot 23^6 +\dotsb) + (16\cdot 23^5 + \dotsb)w + (11\cdot 23^4 + \dotsb)w^2 + (3\cdot 23^3 + \dotsb)w^3 + \\(2\cdot 23^2 + \dotsb)w^4 + (10\cdot 23 + \dotsb)w^5 + (21 + \dotsb)w^6 + \dotsb .
\end{multline*}
Thus $\lambda(a_{4}) = 6$ and the zeroes of $a_4$ all lie on the circle $v_{23}(-) = 1$. Since $\lambda(a_2) = 2$, the point $(3,\lambda(a_3))$ is not on the $w$-adic Newton polygon of the mod $23$ reduction. This means that the zeroes of $a_3(w)$ lying outside the disc $v_{23}(-)\geq 1$ are somehow irrelevant to Conjecture \ref{conj:in-text}.
\end{example}

\begin{example}\label{example:zeros15}
You can even find examples of zeroes of $a_1 = -\tr(U_p)$ outside of the central region $v_p(-)\geq 1$. Let $p=5$ and $N=3$. On the component $k \congruent 0 \bmod 4$ with weight coordinate $w = \kappa(6) - 1$, we found that
\begin{equation*}
a_1(w)  = (2\cdot 5 + \dotsb) + (5 + \dotsb ) w + (2\cdot 5 + \dotsb)w^2  + (2 \cdot 5 + \dotsb) w^3 + (5 + \dotsb ) w^4 + (1 + \dotsb ) w^5 + \dotsb
\end{equation*}
By examination, we have that $\mu(a_1) = 0$ and $a_1$ has five roots all lying in the region $v_5(-) = {1 \over 5}$. This example is even slightly worse than it appears because the zeroes of $a_1$ lie closer to the boundary than the arithmetic weights $z^k \chi_{25}$ corresponding to characters of conductor $25= 5^2$. A further computation, however, shows that this does not disprove Conjecture \ref{conj:in-text} for $p=5$ and $N=3$ with $r = 1$ even. Indeed, it follows from the data in Table \ref{table:p=5N=3} below that the index $i=1$ does not define a point on either the $w$-adic Newton polygon mod $5$ nor the Newton polygon of $P(\kappa,t)$ for any weight $\kappa$ with $0 < v_5(w(\kappa)) < 1$. Compare with the corresponding table on the component of $5$-adic weight space corresponding to weights $k \congruent 2 \bmod 4$ (see Table \ref{app:p=5N=3comp=2}).

\begin{table}[htdp]
\caption{$p=5, N=3$. Experimental observations for $$\det(1 - \restrict{tU_5}{S_{\kappa}^{\dagger}(3)}) = 1 + \sum a_i(w)t^i$$ on the component of $5$-adic weight space corresponding to weights $k \congruent 0 \bmod 4$. (The notation $\bf a_m$ means the value $a$ repeated $m$ times.)}
\begin{center}
\begin{tabular}{|c|c|c|l|}
\hline
$i$ & $\mu(a_i)$ & $\lambda(a_i)$ & Slopes of zeroes of $a_i$\\
\hline
1 & 0 & 5 & $\bf {1\over 5}_5$\\
2 & 0 & 2 & $\bf 1_2$\\
3 & 0 & 3 & $2,\bf 1_2$ \\
4 & 0 & 4 & $\bf 1_4$\\
5 & 0 & 7 & $\bf 1_7$\\
6 & 0 & 10 & $2,\bf 1_9$\\
7 & 0 & 14 & $\bf 2_2, \bf 1_{10} \bf {1\over 2}_2$\\
8 & 0 & 16 & $\bf 2_3, \bf 1_{13}$\\
\hline
\end{tabular}
\end{center}
\label{table:p=5N=3}
\end{table}
\end{example}

\begin{example}
Finally, the authors have not seen definitively different behavior in the location of the zeroes of the $a_i(w)$ in the so-called Buzzard irregular cases. For example, the prime $p=5$ is $\Gamma_0(14)$-irregular in the sense of \cite{Buzzard-SlopeQuestions}. The location of the zeroes of $a_i(w)$ for $i=1,\dotsc,8$ on the component $k\congruent 0 \bmod 4$ are recorded in Table \ref{table:p=5N=14} below. Here one observes that $a_6(w)$ and $a_7(w)$ have zeroes in the region $0 < v_5(w) < 1$, but that the sixth and seventh indices visibly do not lie on the $w$-adic Newton polygon of ${P(w,t)}$, and hence are irrelevant for the $w$-adic slopes. However, it is easily verified from the ultrametric inequality that the sixth and seventh indices are also irrelevant for $P(\kappa,t)$ for any weight $\kappa$ with $0 < v_5(w(\kappa)) < 1$ as well.
\begin{table}[htdp]
\caption{$p=5, N=14$. Experimental observations for $$\det(1 - \restrict{tU_5}{S_{\kappa}^{\dagger}(14)}) = 1 + \sum a_i(w)t^i$$ on the component of $5$-adic weight space corresponding to weights $k \congruent 0 \bmod 4$. (The notation $\bf a_m$ means the value $a$ repeated $m$ times.)}
\begin{center}
\begin{tabular}{|c|c|c|l|}
\hline
$i$ & $\mu(a_i)$ & $\lambda(a_i)$ & Slopes of zeroes of $a_i$\\
\hline
1 & 0 & 5 & $-$\\
2 & 0 & 2 & $-$\\
3 & 0 & 1 & $1$ \\
4 & 0 & 0 & $-$\\
5 & 0 & 1 & $1$\\
6 & 0 & 4 & $1, \bf {1\over 3}_3$\\
7 & 0 & 4 & $\bf 1_2, \bf {1\over 2}_{2}$\\
8 & 0 & 4 & $\bf 1_4,$\\
\hline
\end{tabular}
\end{center}
\label{table:p=5N=14}
\end{table}
\end{example}

\subsection{}
Here is a question which a computer could never answer.
\begin{question}
Is there an $r >0$ such that the locus of zeroes of $\set{a_i(w)}$ is uniformly bounded in $v_p(-) \geq r$?
\end{question}
Either a positive or a negative answer would be interesting, and a positive answer would prove Conjecture \ref{conj:in-text} (if $N=1$ at least).

\subsection{}
By Theorem A, if $N = 1$ then the $\mu$-invariants of every coefficient of the Fredholm series is zero. By Corollary A2, infinitely many $\mu$-invariants are zero for a general tame level.

\begin{question}
Can the $\mu$-invariants be positive for $N>1$?
\end{question}
\begin{answer}
Yes.
\end{answer}

\begin{example-special}\label{example:p2N3}
Let's consider $p=2$ and $N=3$. We will flesh out Section \ref{subsec:examplep=2N=3}.  By Theorem \ref{thm:koike-iwasawa} we have that
\begin{equation*}
T_1 = -c_3(2,1)[1] - {c_3(2,1,1) \over \rho_{1,1}^2-2}\left[{1 + \sqrt{-7}\over 2}\right]
\end{equation*}
where we choose the square root so that $\sqrt{-7} \congruent 1 \bmod 4$ in $\Z_2$. In the appendix we compute the constants and get $c_3(2,1) = 2$ (see Lemma \ref{lemma:const-term-special}) and $c_{3}(2,1,1)=0$ (see Lemma \ref{lemma:harder-constants}). Thus $T_1 = -2[1]$ meaning $a_1 = 2[1]$ has a positive $\mu$-invariant. And, in particular, we've shown that the function $\kappa \mapsto \tr(\restrict{U_2}{S_{\kappa}^{\dagger}(3)})$ is the constant function $-2$.
\end{example-special}
%Let's record the essence of the previous example since we don't think examples like this were previously widely known.
%
%\begin{proposition}\label{prop:trace-constant}
%
%\end{proposition}

\subsection{}

Returning to Example \ref{example:p2N3}, can we say more about the $\mu$-invariants of the higher indices? 
\begin{proposition}
Let $P(w,t) = 1 + \sum a_i(w)t^i$ be the characteristic power series of $U_2$ acting on $2$-adic overconvergent cuspforms of level $\Gamma_0(3)$. Then $\mu(a_i) = 0$ if and only if $i$ is even.
\end{proposition}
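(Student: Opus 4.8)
The plan is to prove the two implications separately, both resting on Koike's formula (Theorem~\ref{thm:koike-iwasawa}) together with the explicit constants for $p=2$, $N=3$ computed in Appendix~\ref{app:constants}: namely $c_3(2,j)=2$ for all $j\ge 1$ (Lemma~\ref{lemma:const-term-special}) and $c_3(2,s,j)\in 2\Z_2$ for every admissible pair $(s,j)$ (Lemma~\ref{lemma:harder-constants}). Here $\eta$ is the unique even character of $\Delta=\{\pm1\}$, so $[1]_\eta$ is the identity of $\Gamma$ and, for a surd $\rho=\rho_{s,j}$, $[\rho]_\eta=[\rho\,\omega(\rho)^{-1}]$ is a single group-like element of $\Z_2[\Gamma]$. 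By Lemma~\ref{lemma:rhoe} this element---indeed any finite product of such surds divided by its Teichm\"uller part---is not a root of unity, hence differs from $1$. I will use this exactly as in the proof of Theorem~\ref{thm:mu=0-intext}: when an element of $\Z_2[\Gamma]$ is written as (coefficient on $[1]_\eta$) plus a sum of terms (coefficient)$\cdot[\rho]_\eta$ with $\rho$ a product of surds, the coefficient on $[1]_\eta$ equals the honest coefficient of the identity of $\Gamma$, since no such surd term, and no product of them, can contribute to the identity.

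For the implication ``$i$ even $\Rightarrow\mu(a_i)=0$'' I would track only the identity-coefficient. Write $\alpha_i\in\Z_2$ for the coefficient of $[1]$ in $a_i$ and $\beta_j$ for that of $T_j$; by Theorem~\ref{thm:koike-iwasawa} and $c_3(2,j)=2$ we have $\beta_j=-2$ for all $j\ge1$, and by the observation above the identity-coefficient of $a_{i-j}T_j$ is $\alpha_{i-j}\beta_j$. The Newton relations~\eqref{eqn:newton-relations} then give $i\,\alpha_i=-\sum_{j=1}^i\alpha_{i-j}\beta_j=2\sum_{m=0}^{i-1}\alpha_m$ with $\alpha_0=1$, and one checks by induction that $\alpha_i=i+1$ solves this, since $2\sum_{m=0}^{i-1}(m+1)=i(i+1)$. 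In particular $\alpha_i\ne 0$, so each $a_i\ne0$ and $\mu(a_i)$ is defined; and when $i$ is even $\alpha_i=i+1\in\Z_2^\times$, so Lemma~\ref{lemma:mu} gives $\mu(a_i)=0$.

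For the implication ``$i$ odd $\Rightarrow\mu(a_i)>0$'' the first step is to show $T_j\equiv 0\pmod2$ in $\Z_2[\Gamma]$ for every $j\ge1$. In Koike's formula the identity-coefficient is $-c_3(2,j)=-2\in2\Z_2$, and each surd-coefficient $c_3(2,s,j)/(\rho_{s,j}^2-2^j)$ has $2$-adic valuation $v_2(c_3(2,s,j))\ge1$: indeed $\rho_{s,j}$ is the unit root of $X^2-sX+2^j\equiv X(X+1)\pmod2$, hence $\rho_{s,j}\equiv1\pmod2$, which makes $\rho_{s,j}^2-2^j$ a $2$-adic unit. Collecting the finitely many surd terms by group-like element, every coefficient of $T_j$ lies in $2\Z_2$. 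Now I would induct on odd $i$ using $i\,a_i=-\sum_{j=1}^i a_{i-j}T_j$: for $j$ odd, $T_j\equiv0\pmod2$; for $j$ even, $i-j$ is odd, so $a_{i-j}\equiv0\pmod2$ by the inductive hypothesis. Hence $i\,a_i\equiv0\pmod2$, and since $i\in\Z_2^\times$ we get $a_i\equiv0\pmod 2$, i.e.\ $\mu(a_i)\ge1$. Together with the previous paragraph this proves $\mu(a_i)=0$ if and only if $i$ is even.

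The only genuinely arithmetic input---and the step I expect to be the real work---is the evaluation of the constants $c_3(2,s,j)$ in Appendix~\ref{app:constants}, specifically that they are always even. In the ``$3$ unramified'' cases this is immediate because the local factor at $3$ in Hijikata's trace formula equals $1\pm1$; in the ``$3$ ramified'' cases ($v_3(s^2-2^{j+2})$ odd, which forces $j$ even) that factor is $1$, and one must check that the surviving Hurwitz class numbers are even---the dangerous discriminant $-3$ never occurs, since $s^2+3f^2=2^{j+2}$ has no solution in odd $s,f$ when $j\ge1$ (reduce modulo $8$), and the remaining discriminants $-3m$ with $m>1$ coprime to $3$ have at least two prime factors, so even class number by genus theory. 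Everything else is the same bookkeeping as in the proof of Theorem~\ref{thm:mu=0-intext} and Example~\ref{example:p2N3}.
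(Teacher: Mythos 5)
Your argument is, at its core, the same as the paper's: the identity-coefficient bookkeeping giving $a_i=(i+1)[1]+(\text{surd terms})$ handles even $i$ via Lemma \ref{lemma:mu}, and the induction on odd $i$ via the Newton relations reduces everything to the vanishing of $T_j$ mod $2$. The one place you overreach is the claim that $c_3(2,s,j)$ is even for \emph{every} admissible pair $(s,j)$, hence that $T_j\equiv 0 \bmod 2$ for \emph{all} $j\geq 1$: when $j$ is even and $3\ndvd s$ one has $3 \dvd \Delta_{s,j}$, so Lemma \ref{lemma:harder-constants} does not apply, and your sketch of those cases (the value of Hijikata's local factor at $3$ when $3\dvd\Delta_{s,j}$, and the parity of $\hat h$ for the non-fundamental discriminants that arise) is an outline rather than a proof. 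Fortunately, as your own induction makes clear, you never use $T_j$ for even $j$ --- there the inductive hypothesis $a_{i-j}\equiv 0 \bmod 2$ does the work --- so you only need the odd-$j$ case, and for odd $j$ one has $\Delta_{s,j}=s^2-2^{j+2}\equiv s^2-2\not\equiv 0 \bmod 3$, so Lemma \ref{lemma:harder-constants} applies verbatim and gives $c_3(2,s,j)\in\{0,\ 2H(4\cdot 2^j-s^2)\}\subset 2\Z_2$. This congruence mod $3$ is exactly the shortcut the paper takes; replacing your final paragraph with it makes the proof complete and identical in substance to the paper's.
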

\begin{proof}
Since $c_3(2,j) = 2$ uniformly in $j$ (see Lemma \ref{lemma:const-term-special}) it is not hard to see that for all $i$ the expression of $a_i$ as an element of $\Z_p[\![\Gamma]\!]$ is given by
\begin{equation*}
a_i = (i+1)[1] + \text{finite number of other terms}.
\end{equation*}
In particular, if $i$ is even then $a_i \not\congruent 0 \bmod 2$ by Lemma \ref{lemma:mu}.

Now we will show $a_i \congruent 0 \bmod 2$ when $i$ is odd. We already know that $a_1 = 2[1]$ by Example \ref{example:p2N3}. Suppose that $i > 1$ and by induction we suppose that $a_{i-j} \congruent 0 \bmod 2$ if $j < i$ and $j$ is even. Since $i$ is odd, the recursive formula
\begin{equation*}
i a_i = - \sum_{j=0}^i a_{i-j}T_j
\end{equation*}
implies that it suffices to show that the right hand side vanishes modulo 2. Then, by induction, it suffices to show that $T_j \congruent 0 \bmod 2$ if $j$ is odd. 

Consider the expression of $T_j$ as an Iwasawa function in  Theorem \ref{thm:koike-iwasawa}. We already noted that $c_3(2,1) = 2$ (see Lemma \ref{lemma:const-term-special}). On the other hand, if $j$ is odd then $\Delta_{s,j} = s^2 - 2^{j+2}\congruent s^2-2 \bmod 3$ and thus $3 \ndvd \Delta_{s,j}$ when $j$ is odd. It follows from Lemma \ref{lemma:harder-constants} that each of $c_3(2,s,j)$ (running over $1 \leq s < 2p^{j/2}$) is divisible by two as well and this completes the argument.
\end{proof}

\subsection{}
Inspired by Example \ref{example:p2N3}, Kevin Buzzard suggested the following question and answer.
\begin{question}
Can the $\mu$-invariants be arbitrarily large?
\end{question}
\begin{answer}
Yes.
\end{answer}
\begin{example}
Choose any sequence of primes $\ell_1,\ell_2,\dotsc$ with $\ell_i > 3$ for each $i$. Let $N_m = 3 \cdot \ell_1 \dotsb \ell_{m-1}$. Then $c_{N_m}(-) = c_{3}(-) \cdot \prod_{i=1}^{m-1} c_{\ell_i}(-)$. By  Lemma \ref{lemma:const-term-special} and Lemma \ref{lemma:harder-constants} we see that
\begin{align*}
c_{N_m}(2,1) &= 2^m,\\
c_{N_m}(2,1,1) &= 0.
\end{align*}
Thus, $\tr(\restrict{U_2}{S_{\kappa}^{\dagger}(N_{m})}) = -2^{m}[1]$ for any $m\geq 1$, and $\mu(a_1) = m$ for $p=2$ and tame level $N_m$.
\end{example}

The positive $\mu$-invariants are not a phenomena confined to the case $p=2$.
\begin{example}
Suppose that $N = \ell = 197$ and let $a_2$ be the second coefficient of the characteristic power series of $U_3$ acting on the $3$-adic overconvergent cuspforms of level $\Gamma_0(197)$. Then $a_1 = 2[1]$ and $a_2 = 3[1]$ are both constant and the latter has a positive $\mu$-invariant. 
\end{example}

\subsection{} In \cite{BuzzardKilford-2adc}, it is proven that if $p=2$ and $N=1$ then Conjecture \ref{conj:in-text} is true with $r=3$. The coefficients of the characteristic series in this case never have positive $\mu$-invariants and all their zeroes lie in the region $v_2(w)\geq 3$. In Table \ref{app:p=2N=3} below, we give the $\mu$ and $\lambda$-invariants for the coefficients $a_i(w)$ of the operator $U_2$ acting on $2$-adic overconvergent cuspforms of level $\Gamma_0(3)$. What one notices is the functions $a_i(w)$ have all their zeroes contained in the disc $v_2(w) \geq 3$, just like the Buzzard-Kilford example \cite{BuzzardKilford-2adc}.

\begin{question}
Is it possible that for $p=2$ Conjecture \ref{conj:in-text} is true with $r = 3$ for every $N\geq 1$?
\end{question}
\begin{answer}
No. If Conjecture \ref{conj:in-text} is true for $p=2$ and $N=3$ then $r \leq 2$ is necessary.
\end{answer}
\begin{example}\label{example:p=2N=3-bad-radius}
Let $p=2$ and $N=3$. It follows immediately from Table \ref{app:p=2N=3} that if $2 < v_2(w(\kappa)) < 3$ then the lowest slope in weight $\kappa$ is either 1 or repeated at least three times (it is likely 1). In particular, the first breakpoint occurs either at index 1 or at index greater than or equal to 3.  But, if $\chi_8$ is the even Dirichlet character of conductor $8$ then one can compute, in {\tt sage} for example, the slopes of $U_2$ acting on $S_4(\Gamma_1(24),\chi_8)$ to be ${1\over 2}, {1\over 2}, 1, 1, {3\over 2}, {3\over 2}, \dotsc$.  
\end{example}
We also checked that a similar phenomena occurs when $N=5$ (half the $\mu$-invariants are positive and the optimal radius for Conjecture \ref{conj:in-text} is at most 2). Lest the reader explain away these examples as being due to positive $\mu$-invariants, we also give the example of $N=7$.
\begin{example}\label{example:p2N7-break-BK}
Let $p=2$ and $N=7$. We computed the first nine coefficients of the characteristic power series and compiled the information into Table \ref{table:p2N7_fullspace}.
\begin{table}[htdp]
\caption{$p=2, N=7$. Experimental observations for $$\det(1 - \restrict{tU_2}{S_{\kappa}^{\dagger}(7)}) = 1 + \sum a_i(w)t^i$$ on the even component of $2$-adic weight space. (The notation $\bf a_m$ means the value $a$ repeated $m$ times.)} 
\begin{center}
\begin{tabular}{|c|c|c|l|}
\hline
$i$ & $\mu(a_i)$ & $\lambda(a_i)$ & Slopes of zeroes of $a_i$\\
\hline
1 & 0 & 0 & --\\
2 & $0$ & 2 & $\bf 1_2$\\
3 & $0$ & 2 & $\bf 1_2$\\
4 &$0$ & 2 & $\bf 4_2$ \\
5 &$0$ & 2 & $\bf 4_2$ \\
6 &$0$ & 6 & $\bf 4_2$, $3$, $2$, $\bf{1\over 2}_2$\\
7 &$0$ & 6 & $\bf{7\over 2}_2$, $\bf 3_2$, $\bf{1\over 2}_2$\\
8 &$0$ & 6 & $4$, $\bf{7 \over 2}_2$, $\bf 3_3$\\
9 &$0$ & 6 & $\bf 4_2$, $\bf 3_4$\\
\hline
\end{tabular}
\label{table:p2N7_fullspace}
\end{center}
\end{table}%

Consider the even Dirichlet character $\chi_8$ of conductor $8$. The classical space $S_2(\Gamma_1(56),\chi_8)$ of cuspforms is six dimensional. The characteristic polynomial of $U_2$ acting on this space (computed in {\tt sage}) is given by
\begin{equation*}
\det(\restrict{U_2-tI}{S_2(\Gamma_1(56),\chi_8)}) = t^6 + t^5 + 2t^4 + 4t^3 + 4t^2 + 4t + 8.
\end{equation*}
Thus the slopes of $U_2$ in classical weight two with character $\chi_8$ are given by $0, {1\over 2},{1\over 2}, {1\over 2}, {1\over 2}, 1$. It follows from Theorem \ref{thm:coleman-control} that the Newton polygon of $U_2$ acting on $S_{z^2\chi_8}^{\dagger}(7)$ must break at index $i=1$ and $i=5$.

Now suppose that Conjecture \ref{conj:in-text} is true for some $r > 2$. Since $v_2(w(z^2\chi_8)) = 1 < r$, the first two indices of break points of the Newton polygon(s) over the region $0 < v_2(-) < r$ are $1$ and $5$. But if $w_0$ is any weight with $2 < v_2(w_0) < r$, we check that there is a break point strictly between indices $1$ and $5$.  Indeed, it is immediate from Table \ref{table:p2N7_fullspace} that:
\begin{equation*}
{v_2(a_3(w_0)) - v_2(a_1(w_0)) \over 3-1} = {2 - 0 \over 2} = 1 
\end{equation*}
and
\begin{equation*}
{v_2(a_5(w_0))-v_2(a_3(w_0)) \over 5-3} = {2 v_2(w_0) -2  \over 2} = v_2(w_0) -1.
\end{equation*}
This is a contradiction since $v_2(w_0)-1 > 1$.
\end{example}
\begin{remark}
One cannot produce a concrete example of this phenomena by computing a classical space of modular forms since there is no classical weight in the region $2 < v_2(-) < 3$.
\end{remark}

\begin{appendix}
\renewcommand{\thesection}{\Roman{section}}

\section{Tables}\label{app:tables}
The tables below were constructed using an implementation of Koike's formula (see Theorem \ref{thm:koike-iwasawa}) in {\tt sage} \cite{sagemath}. The relevant programs are available on the websites of the authors.

We present the following data:\ for a fixed component of weight space (given by a congruence class $k \bmod p-1$) we give the $\mu$ and $\lambda$-invariants of the coefficients $a_i(w)$ of the characteristic power series, along with the valuations of the finitely many zeroes of each $a_i(w)$. The algorithm is slow, exponential in the maximal index $i$. Moreover, as $i\goto \infty$, one must use higher and higher $(w,p)$-adic precision. 

For example, if one wants to reproduce results in Table \ref{p=2N=1} ($p=2$ and $N=1$) up to $i=10$, it is enough to only work modulo $(w^{56}, 2^{250})$.  The $w$-adic precision can be computed ahead of time by \cite{BuzzardKilford-2adc} and the necessary $2$-adic precision can be estimated by \cite[Corollary 1]{BuzzardCalegari-2adicSlopes}. This takes roughly a minute on the first author's personal laptop. In contrast, going up to $i=20$ requires roughly precision $(w^{211}, 2^{850})$ and took almost eight hours to complete.

Throughout, the notation $\bf a_m$ means the value $a$ repeated $m$ times.

\begin{table}[!htdp]
\caption{$p=2, N = 1$. Experimental observations for $$\det(1 - \restrict{tU_2}{S_{\kappa}^{\dagger}(1)}) = 1 + \sum a_i(w)t^i$$ on the even component of $2$-adic weight space.}
\begin{center}
\begin{tabular}{|c|c|c|l|}
\hline
$i$ & $\mu(a_i)$ & $\lambda(a_i)$ & Slopes of zeroes of $a_i$\\
\hline
1 & 0 & 1 & 3\\
2 & 0 & 3 & 4, $\bf 3_2$\\
3 & 0 & 6 & 7, 4, $\bf 3_4$ \\
4 & 0 & 10 & 7, 5, $\bf 4_2$, $\bf 3_6$\\
5 & 0 & 15 & $6,{\bf 5_2}, {\bf 4_3}, {\bf 3_9}$\\
6 & 0 & 21 & $8, 6, {\bf 5_2}, {\bf 4_5}, {\bf 3_{12}}$\\
7 & 0 & 28 & $\bf 8_2, 6, \bf 5_3, \bf 4_6, \bf 3_{16}$\\
8 & 0 & 36 & $\bf 8_2, \bf 6_2, \bf 5_4, \bf 4_8, \bf 3_{20}$\\
9 & 0 & 45 & $8, 7, \bf 6_3, \bf 5_5, \bf 4_{10}, \bf 3_{25}$\\
10 & 0 & 55 & $\bf 7_2, \bf 6_4, \bf 5_6, \bf 4_{13}, \bf 3_{30}$ \\
$\vdots$ & $\vdots$ & $\vdots$ & $\vdots$\\
20 & 0 & 210 & $9, \bf 8_4, \bf 7_8, \bf 6_{12}, \bf 5_{25}, \bf 4_{50}, \bf 3_{110}$\\
\hline
\end{tabular}
\end{center}
\label{p=2N=1}
\end{table}%

\begin{table}[!htdp]
\caption{$p=3, N = 1$. Experimental observations for $$\det(1 - \restrict{tU_3}{S_{\kappa}^{\dagger}(1)}) = 1 + \sum a_i(w)t^i$$ on the  component of $3$-adic weight space corresponding to $k\congruent 0 \bmod 2$.}
\begin{center}
\begin{tabular}{|c|c|c|l|}
\hline
$i$ & $\mu(a_i)$ & $\lambda(a_i)$ & Slopes of zeroes of $a_i$\\
\hline
1 & 0 & 2 & $\bf 1_2$\\
2 & 0 & 6 & $3, \bf 1_5$\\
3 & 0 & 12 & $3, \bf 2_2, \bf 1_9$\\
4 & 0 & 20 & $3, \bf 2_4, \bf 1_{15}$\\
5 & 0 & 30 & $4, \bf 3_2, \bf 2_5, \bf 1_{22}$\\
6 & 0 & 42 & $\bf 4_2, \bf 3_2, \bf 2_8, \bf 1_{30}$\\
\hline
\end{tabular}
\end{center}
\label{p=3N=1}
\end{table}%

\begin{table}[!htdp]
\caption{$p=2, N=3$. Experimental observations for $$\det(1 - \restrict{tU_2}{S_{\kappa}^{\dagger}(3)}) = 1 + \sum a_i(w)t^i$$ on the even component of $2$-adic weight space.}
\begin{center}
\begin{tabular}{|c|c|c|l|}
\hline
$i$ & $\mu(a_i)$ & $\lambda(a_i)$ & Slopes of zeroes of $a_i$\\
\hline
1 & 1 & 0 & $-$\\
2 & 0 & 1 & 4\\
3 & 1 & 2 & $\bf 3_2$ \\
4 & 0 & 3 & $4, \bf 3_2$\\
5 & 1 & 4 & $8, 4, {\bf 3_2}$\\
6 & 0 & 6 & $6,5,4,{\bf 3_2}$\\
7 & 1 & 8 & $6, {\bf 4_3}, {\bf 3_4}$\\
\hline
\end{tabular}
\end{center}
\label{app:p=2N=3}
\end{table}

\begin{table}[!htdp]
\caption{$p=5, N=3$. Experimental observations for $$\det(1 - \restrict{tU_5}{S_{\kappa}^{\dagger}(3)}) = 1 + \sum a_i(w)t^i$$ on the  component of $5$-adic weight space corresponding to weights $k \congruent 2 \bmod 4$.}
\begin{center}
\begin{tabular}{|c|c|c|l|}
\hline
$i$ & $\mu(a_i)$ & $\lambda(a_i)$ & Slopes of zeroes of $a_i$\\
\hline
1 & 0 & 0 & $-$\\
2 & 0 & 1 & 1\\
3 & 0 & 3 & $\bf 2_2, 1$ \\
4 & 0 & 5 & $\bf 2_2, \bf 1_3$\\
5 & 0 & 7 & $\bf 2_3 \bf 1_4$\\
6 & 0 & 9 & $\bf 2_4, \bf 1_5$\\
7 & 0 & 12 & $\bf 2_5, \bf 1_7$\\
8 & 0 & 16 & $\bf 2_5, \bf 1_{11}$\\
\hline
\end{tabular}
\end{center}
\label{app:p=5N=3comp=2}
\end{table}

\pagebreak

\section{Constants in trace formulae}
\label{app:constants}

In this appendix, we briefly record information about the constants that appear in Corollary \ref{thm:koike-iwasawa} for a general tame level $N$. We refer to the notation of Hijikata, especially \cite[pp.\ 57--58]{Hijikata-TraceFormula}. Among all the terms in \cite[Theorem 0.1]{Hijikata-TraceFormula}, the only ones which survive the $p$-adic limit in Corollary \ref{thm:koike-iwasawa} are the terms labeled (h) and (e). Among those terms, (h) gives rise to the ``constant term'' $c_N(p,j)$ in front of the group-like element $[1]$ in Corollary \ref{thm:koike-iwasawa} and the term(s) labeled (e) gives rise to the other constants.

\subsection{}
It's not hard to see that the integer $s = p^j+1$ is the unique positive integer $s$ co-prime to $p$ such that $s^2 - 4p^j = t^2$ for an integer $t$, in which case we have $t = p^j -1$. Then the definition of $c_N(p,j)$ in \cite{Hijikata-TraceFormula} is
\begin{equation*}
c_N(p,j) := {1 \over p^j-1}\sum_{f \dvd (p^j-1)} \phi\left(p^j-1\over f\right) c(p^j+1,f),
\end{equation*}
where $c(s,f)$ is as explained on \cite[p.\ 58]{Hijikata-TraceFormula}. Simple examples when $N$ is not square-free show that you really have to compute the sum. On the other hand, an easy special case is the following.
\begin{lemma}\label{lemma:const-term-special}
If $N = \ell$ is a prime different from $p$ then $c(p^j+1,f) = 2$ for all $f \dvd p^j-1$ and thus $c_{\ell}(p,j) = 2$ for all primes $\ell \neq p$.
\end{lemma}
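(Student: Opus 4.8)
The plan is to reduce the assertion to a single local computation at $\ell$, using that for $s = p^j+1$ the polynomial $X^2-(p^j+1)X+p^j$ factors completely over $\Z$ as $(X-1)(X-p^j)$ --- hence over $\Z_\ell$ as well --- with both roots being units of $\Z_\ell$ (since $\ell\neq p$) whose difference is $p^j-1$.

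First I would recall from \cite[p.~58]{Hijikata-TraceFormula} that, in the split (perfect-square discriminant) case at hand, $c(s,f)$ is assembled from purely local data: it is a product of local contributions indexed by the primes $q \mid N$, the contribution at a prime not dividing $N$ being trivial. Since $N=\ell$ is prime, only the factor at $\ell$ survives, so $c(p^j+1,f)$ equals that one local factor and in particular depends on $f$ only through $v_\ell(f)$; moreover $v_\ell(N)=1$. This is the step that uses squarefreeness of $N$: for non-squarefree $N$ the higher local powers genuinely enter, which is why the lemma is restricted to $N=\ell$ prime.

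Next I would evaluate this local factor directly from Hijikata's definition. As $1$ and $p^j$ are units of $\Z_\ell$, they span a split $\Q_\ell$-subalgebra of $M_2(\Q_\ell)$ isomorphic to $\Q_\ell\times\Q_\ell$, whose action on the Bruhat--Tits tree fixes exactly two ends; the associated local count for an Eichler order of level $\ell$ is $2$ --- the same ``$2$'' as the number of cusps of $X_0(\ell)$. When $\ell\nmid p^j-1$ the two roots are already distinct modulo $\ell$, every $f\mid p^j-1$ is prime to $\ell$, and one reads off the count $2$ immediately. The one point that actually needs the definition is the case $\ell\mid p^j-1$: then the two roots collide modulo $\ell$ and divisors $f$ with $\ell\mid f$ occur, so one must check that Hijikata's conductor refinement still yields exactly $2$ for each value $0\le v_\ell(f)\le v_\ell(p^j-1)$. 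I expect this local verification to be the main (though still elementary) obstacle.

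Finally, granting $c(p^j+1,f)=2$ for every $f\mid t:=p^j-1$, the displayed formula collapses:
$$
c_\ell(p,j) = \frac{1}{t}\sum_{f \mid t}\phi\!\left(\frac{t}{f}\right)c(p^j+1,f) = \frac{2}{t}\sum_{f \mid t}\phi\!\left(\frac{t}{f}\right) = \frac{2}{t}\cdot t = 2,
$$
using the identity $\sum_{d \mid t}\phi(d)=t$. Everything apart from the local count at $\ell$ is formal.
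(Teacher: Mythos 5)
Your proposal is correct and follows essentially the same route as the paper's own (equally sketchy) proof: reduce to a local count at $\ell$ of the roots of $X^2-(p^j+1)X+p^j=(X-1)(X-p^j)$, note that when $p^j\not\equiv 1 \bmod \ell$ the two unit roots stay distinct and give the count $2$, and finish with $\sum_{d\mid t}\phi(d)=t$. For the case $\ell\mid p^j-1$ that you flag as the remaining obstacle, the paper's closing observation is that Hijikata's sets $A_{\eta}(\dotsb)$ and $B_{\eta}(\dotsb)$ only involve congruences modulo $\ell^{v_\ell(N)}=\ell$ (not higher powers, since $N=\ell$ is squarefree), where the polynomial becomes $(x-1)^2$, so the verification that each $f$ still contributes $2$ is immediate.
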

\begin{proof}
We only sketch the computation since it is easily verified. If $p^j - 1 \not\congruent 0 \bmod \ell$ then $x^2 - (p^j+1)x + p^j$ has exactly two roots modulo $\ell^m$ for all $m\geq 1$, namely $p^j$ and $1$. Thus, to compute $c(p^j+1,f)$ in this case, one only uses the $A$-terms in \cite[p.\ 58]{Hijikata-TraceFormula} and we see $c(p^j+1,f) = 2$. If $p^j-1 \congruent 0 \bmod \ell$ then it could be more complicated except that in the end the sets defined $A_{\eta}(\dotsb)$ and $B_{\eta}(\dotsb)$ in \cite{Hijikata-TraceFormula} are counting certain solutions to $x^2 - (p^j+1)x + p^j$ modulo $\ell^{v_\ell(N)} = \ell$, which in this case is $(x-1)^2$. So the computation is easy still.
\end{proof}

\begin{remark}
If $N$ and $M$ are coprime then $c_{NM}(p,j) = c_N(p,j)c_M(p,j)$ and so Lemma \ref{lemma:const-term-special} also gives a formula for square-free $N$.
\end{remark}

\subsection{}
The rest of the terms in Theorem \ref{thm:koike-iwasawa} are computed  from the terms denoted by (e) in \cite{Hijikata-TraceFormula}. For each $s$ such that $p\ndvd s$ and $1\leq s < 2p^{j/2}$, write $\Delta_{s,j}=s^2-4p^j = t^2 D_K$ where $D_K$ is the discriminant of an imaginary quadratic field. Then 
\begin{equation}\label{eqn:funny-constant}
c_N(p,s,j) := \sum_{f \dvd t} \hat h\left({s^2 - 4p^j\over f^2}\right) c(s,f)
\end{equation}
where $c(s,f)$ is as on \cite[p.\ 58]{Hijikata-TraceFormula} again, and $\hat h(D)$ is defined as a certain normalized class number\footnote{If we write $D = t^2D_K$ where $D_K$ is the discriminant of an imaginary quadratic field, then $\cal O_D = \Z + t\cal O_K$ is an order in $K$ and $\hat h(D) := 2h(\cal O_D)/w(\cal O_D)$ where $h(\cal O_D)$ is the usual class number and $w(\cal O_D)$ is the cardinality of the roots of unity.}. It can be computed from tables of class numbers of imaginary quadratic fields (e.g.\ on a computer) by the formula
\begin{equation*}
\hat h(D) = t \cdot \hat h(D_K) \prod_{\ell \dvd t} \left(1 - \left(D_K \over \ell\right){1\over \ell}\right)
\end{equation*}
where $D = t^2 D_K$ and $D_K$ is a fundamental discriminant. One can check that this agrees with the formulae given for $N=1$ in Section \ref{subsec:koikeN=1} by using that $c(s,f) = 1$ for all $s$ and $f$ if $N=1$ and the well-known identity for the Hurwitz class numbers
\begin{equation}\label{eqn:hurwitz-to-hath}
H(4p^j-s^2) = \sum_{f \dvd t}  \hat h\left({s^2-4p^j \over f^2}\right).
\end{equation}
We make a similar computation as Lemma \ref{lemma:const-term-special} in the case where $N =\ell$ is prime.

\begin{lemma}\label{lemma:harder-constants}
Suppose that $N = \ell$ is a prime different from $p$ and let $\Delta_{s,j} = s^2 - 4p^j$. If $(\Delta_{s,j},\ell) = 1$ then
\begin{equation*}
c(s,f) = \begin{cases}
2 & \text{if $\left({\Delta_{s,j} \over \ell}\right) = 1$}\\
0 & \text{if $\left({\Delta_{s,j} \over \ell}\right) =-1$}
\end{cases}
\end{equation*}
where $\left({\ast \over\ast }\right)$ is the Kronecker symbol. In particular, $c(s,f)$ is independent of $f$ and
\begin{equation*}
c_\ell(p,s,j) = \begin{cases}
2\cdot H(4p^j-s^2) & \text{if $\left(\Delta_{s,j} \over \ell\right)=1$}\\
0 & \text{if $\left(\Delta_{s,j} \over \ell\right)=-1$}.
\end{cases}
\end{equation*}
\end{lemma}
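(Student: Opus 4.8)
The plan is to unwind Hijikata's definition of the local factor $c(s,f)$ at the prime $\ell$ under the hypothesis $(\Delta_{s,j},\ell)=1$, exactly in the spirit of the proof of Lemma \ref{lemma:const-term-special}, and then to substitute the outcome into the defining formula \eqref{eqn:funny-constant} for $c_\ell(p,s,j)$, simplifying at the end with the Hurwitz class number identity \eqref{eqn:hurwitz-to-hath}.

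First I would observe that, because $N=\ell$ is prime, $c(s,f)$ is a single local quantity at $\ell$ and depends on $f$ only through $v_\ell(f)$; following Hijikata \cite[pp.\ 57--58]{Hijikata-TraceFormula}, it is assembled from the sets $A_\eta$ and $B_\eta$, which count solutions of $x^2 - sx + p^j \equiv 0$ modulo a power of $\ell$ controlled by $v_\ell(N)=1$ and by $v_\ell(f)$. Writing $\Delta_{s,j} = t^2 D_K$ with $D_K$ the discriminant of an imaginary quadratic field, the hypothesis $\ell \ndvd \Delta_{s,j}$ forces $\ell \ndvd t$, hence $v_\ell(f)=0$ for every $f \dvd t$. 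So for all $f$ occurring in \eqref{eqn:funny-constant} the factor $c(s,f)$ collapses to the number of roots of $x^2 - sx + p^j$ in $\F_\ell$; in particular it does not depend on $f$. The point here --- the analogue of the easy case in Lemma \ref{lemma:const-term-special} --- is that the $B_\eta$-type contributions, which are the ones sensitive to $f$, simply do not arise when the discriminant is an $\ell$-adic unit.

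Next I would run the resulting clean dichotomy. The discriminant of $x^2 - sx + p^j$ is $\Delta_{s,j}$, a nonzero element of $\F_\ell$, so the polynomial is separable modulo $\ell$ and has either two roots or none: $c(s,f)=2$ when $\Delta_{s,j}$ is a square modulo $\ell$ and $c(s,f)=0$ otherwise. Since $\ell \ndvd t$ we have $\left(\frac{\Delta_{s,j}}{\ell}\right) = \left(\frac{D_K}{\ell}\right)$, so this dichotomy is governed precisely by the Kronecker symbol $\left(\frac{\Delta_{s,j}}{\ell}\right)$, which gives the first displayed formula and the asserted $f$-independence. Substituting the constant value $c(s,f)=c\in\{0,2\}$ into \eqref{eqn:funny-constant} and pulling it out of the sum then yields
\[
c_\ell(p,s,j) = c \cdot \sum_{f \dvd t} \hat h\!\left(\frac{s^2 - 4p^j}{f^2}\right) = c \cdot H(4p^j - s^2),
\]
the last equality being \eqref{eqn:hurwitz-to-hath}. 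This is exactly the claimed formula for $c_\ell(p,s,j)$.

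The only genuine obstacle is the very first step: reading off carefully from Hijikata's $A_\eta$ and $B_\eta$ sets that, at a prime $\ell$ with $\ell \ndvd \Delta_{s,j}$, the factor $c(s,f)$ is nothing more than the count of roots of $x^2 - sx + p^j$ modulo $\ell$, and in particular is $f$-independent. Once that bookkeeping is pinned down, everything after it is formal.
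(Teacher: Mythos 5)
Your argument follows the paper's own (largely omitted) proof exactly: the first part is read off from Hijikata's local counts as in Lemma \ref{lemma:const-term-special}, using that $\ell\nmid\Delta_{s,j}$ forces $v_\ell(f)=0$ and reduces $c(s,f)$ to the number of roots of $x^2-sx+p^j$ modulo $\ell$, and the second part is the substitution into \eqref{eqn:funny-constant} plus the identity \eqref{eqn:hurwitz-to-hath}. The only nitpick is that ``square modulo $\ell$'' should be read via the Kronecker symbol (as you do in the next clause) to cover $\ell=2$ correctly; otherwise this is correct and essentially identical to the paper.
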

\begin{proof}
We omit the proof of the first part as it follows easily from the formulae given in \cite{Hijikata-TraceFormula}, as in Lemma \ref{lemma:const-term-special}. The second part of the statement follows the first part, the definition \eqref{eqn:funny-constant} and the identity \eqref{eqn:hurwitz-to-hath}.
\end{proof}

\end{appendix}

\bibliography{slope_bib}
\bibliographystyle{abbrv}

\end{document}